\begin{document}

\title{Generalized Cauchy-Riemann equations\\ and relevant PDE}

\author{V. Gutlyanskii, V. Ryazanov, A. Salimov, R. Salimov}

\theoremstyle{plain}
\newtheorem{theorem}{Theorem}[section]
\newtheorem{lemma}{Lemma}[section]
\newtheorem{proposition}{Proposition}[section]
\newtheorem{corollary}{Corollary}[section]
\theoremstyle{definition}

\newtheorem{example}{Example}[section]
\newtheorem{remark}{Remark}[section]
\newcommand{\keywords}{\textbf{Key words.  }\medskip}
\newcommand{\subjclass}{\textbf{MSC 2000. }\medskip}
\renewcommand{\abstract}{\textbf{Abstract.  }\medskip}
\numberwithin{equation}{section}

\setcounter{section}{0}
\renewcommand{\thesection}{\arabic{section}}
\newcounter{unDef}[section]
\def\theunDef{\thesection.\arabic{unDef}}
\newenvironment{definition}{\refstepcounter{unDef}\trivlist
\item[\hskip \labelsep{\bf Определение \theunDef.}]}%
{\endtrivlist}

\maketitle

\def\Xint#1{\mathchoice
   {\XXint\displaystyle\textstyle{#1}}
   {\XXint\textstyle\scriptstyle{#1}}
   {\XXint\scriptstyle\scriptscriptstyle{#1}}
   {\XXint\scriptscriptstyle\scriptscriptstyle{#1}}
   \!\int}
\def\XXint#1#2#3{{\setbox0=\hbox{$#1{#2#3}{\int}$}
     \vcenter{\hbox{$#2#3$}}\kern-.5\wd0}}
\def\dashint{\Xint-}

\newcommand{\M}{{\cal M}}
\newcommand\disk{{\Bbb D}}

\def\esssup{\mathop{\rm {ess\,sup}}\limits}
\def\ink{\mathop{\int\int}\limits}
\def\h{{\Bbb H}}
\def\R{{\rm Re\,}}
\def\I{{\rm Im\,}}
\def\mod{{\rm mod\,}}
\def\e{\varepsilon}
\def\QED{{\hfill $\Box$\par\bigskip}}

\pagestyle{myheadings}


\begin{abstract}
Here we give a survey of consequences from the theory of the
Beltrami equations in the complex plane $\mathbb C$ to generalized
Cauchy-Riemann equations $\nabla v = B \nabla u$ in the real plane
$\mathbb R^2$ and clarify the relationships of the latter to the
$A-$harmonic equation ${\rm div} A\,{\rm grad}\, u = 0$ with matrix
valued coefficients $A$ that is one of the main equations of the
potential theory, namely, of the hydro\-mechanics (fluid mechanics)
in anisotropic and inhomogeneous media. The survey includes various
types of results as theorems on existence, representation and
regularity of their solutions, in particular, for the main boundary
value problems of Hilbert, Dirichlet, Neumann, Poincare and Riemann.
\end{abstract}

\bigskip
{\bf MSC 2020.} {Primary 30C62, 30C65, 30E25 Secondary 35J25, 76-02}

\bigskip
{\bf Key words.} Cauchy-Riemann system, boundary value problems of
Hilbert, Dirichlet, Neumann, Poincare and Riemann, Beltrami
equations, $A-$harmonic equations, generalized Cauchy-Riemann
equations.


\section{Introduction} By default, we often use here the isomorphism of the real plane
$\mathbb R^2$ and the complex plane $\mathbb C$, implicating the
natural one-to-one correspondence $Z:=(x,y)\leftrightarrow
z:=x+i\,y$. As it is well-known, the characteristic property of an
analytic function $f=u+i\,v$ in the complex plane $\mathbb C$ is
that its real and imaginary parts satisfy the {\bf Cauchy-Riemann
system}
\begin{equation}\label{CR}
\frac{\partial u}{\partial x}\ =\ \frac{\partial v}{\partial y}\ , \
\ \ \frac{\partial u}{\partial y}\ =\ -\,\frac{\partial v}{\partial
x}\ .
\end{equation}
Euler was the first who has found the connection of the system
(\ref{CR}) to the analytic functions.

\medskip

A physical interpretation of (\ref{CR}), going back to Riemann works
on function theory, is that $u$ represents a {\bf potential
function} of the incompressible fluid steady flow in the plane and
$v$ is its {\bf stream function}.

\medskip

This system can be rewritten as the one equation in the matrix form
\begin{equation}\label{M}
\nabla\, v\ =\ {\mathbb H}\ \nabla\, u\ ,
\end{equation}
where $\nabla\, v$ and $\nabla\, u$ denotes the gradient of $v$ and
$u$, correspondingly, interpreted as vector-columns in $\mathbb
R^2$, and ${\mathbb H}:\mathbb R^2\to\mathbb R^2$ is the so-called
{\bf Hodge star operator} represented as the ${2\times 2}$ matrix
\begin{equation}\label{H}
{\mathbb H}\  :=\ \left[\begin{array}{ccc} 0  & -1 \\
1 & 0 \end{array}\right]\ ,
\end{equation}
which carries out the counterclockwise rotation of vectors by the
angle ${\pi}/{2}$ in the real plane $\mathbb R^2$. Thus, (\ref{M})
shows that streamlines and equipotential lines of the fluid flow are
mutually orthogonal.

\medskip

Note that $\mathbb H$ is an analog of the imaginary unit in the
space $\mathbb {\bf M^{2\times 2}}$ of all ${2\times 2}$ matrices
with real entries because
\begin{equation}\label{U}
{\mathbb H}^2\  :=\ -\,I\ ,
\end{equation}
where I is the unit ${2\times 2}$ matrix. Moreover, recall that the
multiplication by the imaginary unit $i=e^{i\frac{\pi}{2}}$ also
means the counterclockwise rotation of vectors by the angle
${\pi}/{2}$ but in the complex plane $\mathbb C$.

\medskip

Elementary algebraic calculations show that the Cauchy-Riemann
system (\ref{CR}) in the complex form means that the function
$f:=u+i\,v$ satisfy the equation
\begin{equation}\label{C}
f_{\bar z}\ =\ 0
\end{equation}
with the formal complex derivative
\begin{equation}\label{P}
\frac{\partial}{\partial\bar z}\ :=\ \frac{1}{2}\left(
\frac{\partial}{\partial x}\ +\ i\ \frac{\partial}{\partial
y}\right)\ ,\ \ \ z\ :=\ x\ +\ i\, y\ .
\end{equation}

The relation (\ref{C}) is a necessary condition for the existence of
the usual complex derivative
\begin{equation}\label{CC}
f^{\prime}(z)\ :=\ \lim\limits_{\Delta z\to 0}\frac{f(z+\Delta
z)-f(z)}{\Delta z}
\end{equation}
and for $f$ to be an analytic function in a domain $D$ of $\mathbb
C$. Inversely, a continuous function $f:D\to\mathbb C$ with the
generalized derivative by Sobolev $f_{\bar z}=0$ a.e. is an analytic
function, see e.g. Lemma 1 in \cite{ABe}.

\medskip

Note that the equation (\ref{C}) in a domain $D$ of $\mathbb C$ is a
partial case of the {\bf Beltrami equations}
\begin{equation}\label{B}
f_{\bar z}\ =\ \mu(z)\, f_z\ ,
\end{equation}
where $\mu:D\to\mathbb C$ is a measurable function with $|\mu(z)|<1$
a.e., and
\begin{equation}\label{D}
\frac{\partial}{\partial z}\ :=\ \frac{1}{2}\left(
\frac{\partial}{\partial x}\ -\ i\ \frac{\partial}{\partial
y}\right)\ .
\end{equation}

In this paper we study the interconnections between the Beltrami
equations (\ref{B}) and the {\bf generalized Cauchy-Riemann
equations} of the form
\begin{equation}\label{BM}
\nabla\, v\ =\ B\ \nabla\, u
\end{equation}
with the matrix valued coefficients $B:D\to M^{2\times 2}$. Then, on
the basis of the well developed theory of the Beltrami equations, we
give the corresponding consequences for the equations (\ref{BM})
that describe flows in anisotropic and inhomogeneous media.

\medskip

Moreover, let us clarify the relationships of the equations
(\ref{BM}) and the {\bf $A-$harmonic equation}
\begin{equation}\label{DIV}
{\rm div} A(Z)\,{\rm grad}\, u(Z) = 0\ ,\ \ \ Z=(x,y)\,\in\,\mathbb
R^2\ ,
\end{equation}
with matrix valued coefficients $A:D\to M^{2\times 2}$ that is one
of the main equations of hydro\-mechanics in anisotropic and
inhomogeneous media.

\medskip

For this purpose, recall that the Hodge operator $\mathbb H$
transforms curl-free fields into divergence-free fields and vice
versa. Thus, if $u\in W^{1,1}_{\rm loc}$ is a solution of
(\ref{DIV}) in the sense of distributions, then the field
$V:={\mathbb H}A\,\nabla u$ is curl-free and, consequently,
$V=\nabla v$ for some $v\in W^{1,1}_{\rm loc}$ and the pair $(u,v)$
is a solution of the equation (\ref{BM}) in the sense of
distributions with
\begin{equation}\label{HA}
B\ :=\ \,{\mathbb H}\cdot A\ .
\end{equation}

\medskip

Vice versa, if $u$ and $v\in W^{1,1}_{\rm loc}$ satisfies (\ref{BM})
in the sense of distributions, then $u$ satisfies (\ref{DIV}) also
in the sense of distributions with
\begin{equation}\label{HB}
A\ :=\ -{\mathbb H}\cdot B\ =\ {\mathbb H}^{-1}\cdot B
\end{equation}
because the curl of any gradient field is zero in the sense of
distributions.

\medskip

The next section is devoted to the interconnections of the complex
coefficients $\mu$ in (\ref{B}) and the matrix valued coefficients
$B:D\to M^{2\times 2}$ in (\ref{BM}), and to the characterization of
the relevant matrices $B$ in (\ref{BM}).

\section{Preliminary algebraic calculations and remarks}

Theorem 16.1.6 in \cite{AIM} stated the interconnections between the
equations (\ref{B}) and (\ref{DIV}) and makes it is possible to
calculate the matrix coefficient $A$ in (\ref{DIV}) through the
complex coefficient $\mu$ in (\ref{B}) and vice versa.

Thus, in view of comments around the relations (\ref{HA}) and
(\ref{HB}) in Introduction, we could apply this theorem to express
the matrix coefficient $B$ in (\ref{BM}) through the complex
coefficient $\mu$ in (\ref{B}) and vice versa. However, the
mentioned comments involved the regularity of functions $f$, $u$ and
$v$ in the Sobolev class $W^{1,1}_{\rm loc}$, i.e., the existence of
the corresponding generalized derivatives.

In fact, the interconnections between the coefficients $B$ and $\mu$
are purely algebraic in nature and they can be established
pointwise. Therefore, although the corresponding computations are
long enough, we carry them out here directly, providing additional
verification of the following.

\begin{proposition}\label{pr} Let $D$ be a domain in $\mathbb C$ and a function $f:D\to\mathbb
C$ have partial derivatives in $x$ and $y$, $z=x+i\,y$, at a point
$z_0\in D$. Suppose that $|f_z(z_0)|>|f_{\bar z}(z_0)|$. Then the
functions $u:={\rm Re}\,f$ and $v:={\rm Im}\,f$ satisfy the equality
(\ref{BM}) at the point $z_0$ with the matrix coefficient
\begin{equation}\label{Bmu}
B\ =\ B^{\mu}\ :=\ \begin{pmatrix}
\frac{2 \, \rm{Im}\,  \mu}{1-|\mu|^2} & -\frac{|1+\mu|^2}{1-|\mu|^2} \\
\frac{|1-\mu|^2}{1-|\mu|^2} & -\frac{2 \, \rm{Im}\,
\mu}{1-|\mu|^2}\,
\end{pmatrix}\ ,\ \ \ \mu\ :=\ \frac{f_{\bar
z}(z_0)}{f_z(z_0)}\ .
\end{equation}
\end{proposition}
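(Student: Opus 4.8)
The plan is a direct computation. I would start from the definitions $u = \R f$, $v = \I f$, and express the four real partial derivatives $u_x, u_y, v_x, v_y$ in terms of the complex derivatives $f_z$ and $f_{\bar z}$. From (\ref{P}) and (\ref{D}) we have $f_z = \tfrac12(f_x - i f_y)$ and $f_{\bar z} = \tfrac12(f_x + i f_y)$, so inverting gives $f_x = f_z + f_{\bar z}$ and $f_y = i(f_z - f_{\bar z})$. Writing $f_x = u_x + i v_x$ and $f_y = u_y + i v_y$ and taking real and imaginary parts, I obtain all four derivatives as the real and imaginary parts of $f_z + f_{\bar z}$ and $i(f_z - f_{\bar z})$.

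Next I would set $\mu := f_{\bar z}(z_0)/f_z(z_0)$, which is legitimate since $|f_z(z_0)| > |f_{\bar z}(z_0)|$ forces $f_z(z_0) \neq 0$ (and in particular $|\mu| < 1$, so the denominator $1 - |\mu|^2$ in (\ref{Bmu}) is positive). Factoring out $f_z(z_0)$, I would write $f_x = f_z(1 + \mu)$ and $f_y = i f_z(1 - \mu)$, and introduce $f_z =: \rho e^{i\theta}$ or simply keep $f_z$ symbolic with $w := f_z(z_0)$. The goal is the matrix identity $\nabla v = B \nabla u$, i.e. the two scalar equations $v_x = B_{11} u_x + B_{12} u_y$ and $v_y = B_{21} u_x + B_{22} u_y$. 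So I need to verify that the claimed $B^\mu$ sends the vector $(u_x, u_y)$ to $(v_x, v_y)$.

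The cleanest route is to treat the map $(u_x, u_y) \mapsto (v_x, v_y)$ as a single real-linear computation. Note $(u_x, u_y) = (\R f_x, \R f_y)$ and $(v_x, v_y) = (\I f_x, \I f_y)$, so I would assemble the complex vector $(f_x, f_y) = w\,(1+\mu,\ i(1-\mu))$ and read off real and imaginary parts. Solving the $2\times 2$ real system for $B$ amounts to inverting the matrix whose columns are the real and imaginary parts arising from $u_x, u_y$; the determinant of the relevant matrix is $u_x v_y - u_y v_x$, which equals the Jacobian $\det Df = |f_z|^2 - |f_{\bar z}|^2 = |w|^2(1 - |\mu|^2) > 0$ by hypothesis, so the inversion is valid and $B$ is well defined. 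The factor $|w|^2$ cancels throughout, leaving an expression depending only on $\mu$, which is why $B$ is independent of the magnitude of $f_z(z_0)$.

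The main obstacle is purely bookkeeping: correctly expanding $|1+\mu|^2 = 1 + 2\R\mu + |\mu|^2$, $|1-\mu|^2 = 1 - 2\R\mu + |\mu|^2$, and tracking the several appearances of $\I\mu$ so that the off-diagonal entries and the $\pm 2\I\mu/(1-|\mu|^2)$ diagonal entries emerge exactly as in (\ref{Bmu}). I would organize this by computing the two products $B^\mu (u_x, u_y)^{\mathsf T}$ componentwise and matching against $v_x, v_y$, rather than inverting a matrix, since verification is less error-prone than derivation. As a consistency check I would confirm the two degenerate special cases: $\mu = 0$ must reduce $B^\mu$ to the Hodge matrix $\mathbb H$ of (\ref{H}), recovering the classical Cauchy-Riemann system (\ref{M}), and the trace of $B^\mu$ vanishes while $\det B^\mu = 1$, consistent with $B$ being (a conjugate of) a rotation-type operator as in the Hodge case.
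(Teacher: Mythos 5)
Your proposal is correct; the underlying algebra is the same as the paper's, but the logical organization is genuinely different. The paper \emph{derives} (\ref{Bmu}): it writes the pointwise Beltrami relation $f_{\bar z}(z_0)=\mu\,f_z(z_0)$ in real coordinates, separates real and imaginary parts into two scalar equations, regroups them as $M_1\nabla v=M_2\nabla u$ for explicit matrices $M_1,M_2$ built from ${\rm Re}\,\mu$ and ${\rm Im}\,\mu$, computes $\det M_1=1-|\mu|^2\neq 0$, and sets $B=M_1^{-1}M_2$. You instead \emph{verify} the stated matrix: factoring $f_x=w(1+\mu)$ and $f_y=iw(1-\mu)$ with $w:=f_z(z_0)$, reading off $u_x,u_y,v_x,v_y$ as real and imaginary parts, and checking the two scalar identities $v_x=B_{11}u_x+B_{12}u_y$ and $v_y=B_{21}u_x+B_{22}u_y$ componentwise. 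This does close: writing $w=a+ib$ and $\mu=\mu_1+i\mu_2$, both sides of the first identity reduce to $a\mu_2+b(1+\mu_1)$ and both sides of the second to $a(1-\mu_1)+b\mu_2$, using only $|1\pm\mu|^2=1\pm 2\mu_1+|\mu|^2$. Your route needs no matrix inversion at all --- only $f_z(z_0)\neq 0$ and $1-|\mu|^2>0$, both given by the hypothesis --- and your sanity checks ($\mu=0$ recovers the Hodge matrix (\ref{H}); ${\rm tr}\,B^{\mu}=0$ and $\det B^{\mu}=1$) line up with the characterization of the class $\mathbb B^{2\times 2}$ in Remark \ref{rem}. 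What the paper's derivation buys is that it does not presuppose the formula, and the same computational scheme is reused for the converse statement, Proposition \ref{p}. One side remark of yours is imprecise, though harmless: the matrix one would invert to \emph{solve} for $B$ is not the Jacobian of $f$. At a single point, $\nabla v=B\nabla u$ is two equations in the four entries of $B$, so $B$ is only pinned down by letting $w$ vary (equivalently, by inverting the matrix whose columns are $\nabla u$ for two independent values of $w$; its determinant is $-(1-|\mu|^2)$, not $u_xv_y-u_yv_x$). Since you explicitly opt for verification rather than derivation, this does not affect the validity of your argument.
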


\begin{proof} Setting $\mu_1={\rm Re}\,\mu$ and $\mu_2={\rm
Im}\,\mu$, we conclude by definition of $f_z$ and $f_{\bar z}$, see
(\ref{P}) and (\ref{D}), correspondingly, that the equality
(\ref{B}) at the point $z_0$ can be rewritten in the form
\[
(u_x+iv_x)+i(u_y+iv_y)=(\mu_1+i\mu_2)[(u_x+iv_x)-i(u_y+iv_y)]
\]
and, separating here the real and imaginary parts, we obtain the
system:
\[
u_x-v_y = \mu_1 (u_x + v_y) + \mu_2 (u_y - v_x)\ ,
\]
\[
u_y+v_x=\mu_1(v_x-u_y)+\mu_2(u_x+v_y)\ ,
\]
and then, grouping the terms containing $v$ on the left and the
terms containing $u$ on the right, and changing sometimes signs, we
have that:
\[
-\mu_2 v_x + \left( 1 + \mu_1 \right) v_y=\left( 1 - \mu_1 \right)
u_x - \mu_2 u_y\ ,
\]
\[
-(1-\mu_1) v_x  + \mu_2  v_y = -\mu_2 u_x + \left( 1 + \mu_1 \right)
u_y\ ,
\]
or the same in the matrix form
$$
M_1\,\nabla v= M_2\, \nabla u\ ,
$$
where
$$
M_1=
\begin{pmatrix}
-\mu_2 & (1+\mu_1) \\
-(1-\mu_1) & \mu_2
\end{pmatrix} \ ,      \    \ \ \ \ M_2=
\begin{pmatrix}
(1-\mu_1) & -\mu_2 \\
-\mu_2 & (1+\mu_1)
\end{pmatrix}\ .$$
Thus, the pair $(u,v)$ satisfies the equality (\ref{B}) at $z_0$
with $B:=M_1^{-1}M_2$. It is easy to see that ${\rm
det}\,M_1=1-|\mu|^2$, and hence directly to verify that
$$
M_1^{-1}=\frac{1}{1-|\mu|^2}\begin{pmatrix} \mu_2 & -(1+\mu_1)\\
(1-\mu_1)  & -\mu_2
\end{pmatrix}\  =\begin{pmatrix}
\frac{\mu_2}{1-|\mu|^2} & -\frac{1+\mu_1 }{1-|\mu|^2}\\
\frac{1-\mu_1}{1-|\mu|^2}  & -\frac{\mu_2}{1-|\mu|^2}
\end{pmatrix}\ .
$$
Finally, we obtain from here the equality (\ref{BM}) at $z_0$ with
$B$ in (\ref{Bmu}).\end{proof}

Let us denote by $\mathbb {\bf B^{2\times 2}}$ space of all
${2\times 2}$ matrices with real entries,
\begin{equation}\label{bB}
B\ =\ \begin{pmatrix}
b_{11} & b_{12} \\
b_{21} & b_{22}\
\end{pmatrix}\ ,
\end{equation}
with ${\rm det}\,B=1$, antisymmetric with respect to its auxiliary
diagonal, i.e., with $b_{22}=-b_{11}$, and with the {\bf ellipticity
condition} $|\mu|<1$, where
\begin{equation}\label{Mu}
\mu\ =\ \mu_B\ :=\
\frac{b_{12}+b_{21}-2\,i\,b_{11}}{b_{12}-b_{21}-2}\ .
\end{equation}


\begin{remark}\label{rem}  Theorem 16.1.6 in \cite{AIM} stated a one-to-one correspondence
between the coefficients $\mu$ in (\ref{B}) and $A$ in (\ref{DIV}).
Thus, the condition ${\rm det}\,B=1$ follows, for instance, by
remarks to this theorem applied to the affine mappings
$f(z)=f^{\mu}(z):=z+\mu \cdot\overline z$, $z=x+i\,y$, of the whole
plane with constant $\mu$ because of the relations (\ref{HA}) and
(\ref{HB}) and because of that ${\rm det}\,{\mathbb H}=1$. Moreover,
under ${\rm det}\,B=1$ and $b_{22}=-b_{11}$, it is easy to see that
the ellipticity condition $|\mu|^2<1$ is equivalent to the condition
$b_{21}>b_{12}$ and by Proposition \ref{pr} to the conditions
$b_{12}<0$ and $b_{21}>0$. \end{remark}

Thus, the properties ${\rm det}\,B=1$, $b_{22}=-b_{11}$, $b_{12}<0$
and $b_{21}>0$ give the {\bf complete characterization} of matrices
$B\in\mathbb B^{2\times 2}$ in the Cauchy-Riemann equations
(\ref{BM}) corresponding to the Beltrami equations (\ref{B}).

\medskip

\begin{proposition}\label{p} Let $D$ be a domain in $\mathbb R^2$, functions $u:D\to\mathbb R$
and $v:D\to\mathbb R$ have partial derivatives in $x$ and $y$ at a
point $Z_0=(x_0,y_0)\in\mathbb R^2$ in $D$. Suppose that the pair of
the functions $(u,v)$ satisfies the equality (\ref{BM}) at the point
$Z_0$ with $B\in\mathbb B^{2\times 2}$. Then the function $f:=u+iv$,
$z=x+i\,y$, satisfy the equality (\ref{B}) at the point
$z_0:=x_0+i\,y_0$ in $\mathbb C$ with the complex coefficient $\mu =
\mu_B$ given in (\ref{Mu}). \end{proposition}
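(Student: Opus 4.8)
The plan is to run the computation of Proposition~\ref{pr} in reverse, entirely pointwise at $Z_0$, so that no regularity beyond the existence of the first partials is needed. First I would write the matrix identity (\ref{BM}) in scalar form, using the antisymmetry $b_{22}=-b_{11}$, as the two relations $v_x = b_{11}u_x + b_{12}u_y$ and $v_y = b_{21}u_x - b_{11}u_y$ among the partial derivatives at $Z_0$. These are the only consequences of the hypothesis, and everything below is algebra in the two real unknowns $u_x,u_y$ at the single point.

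Next I would insert these relations into the definitions (\ref{P}) and (\ref{D}) of the complex derivatives, written as $2f_z = (u_x + v_y) + i(v_x - u_y)$ and $2f_{\bar z} = (u_x - v_y) + i(v_x + u_y)$. After substitution both $f_z$ and $f_{\bar z}$ become explicit $\mathbb C$-linear combinations of $u_x$ and $u_y$: the $u_x$-coefficients are $(1\pm b_{21}) + i\,b_{11}$ and the $u_y$-coefficients are $\mp b_{11} + i(b_{12}\mp 1)$, with the upper signs giving $2f_z$ and the lower signs giving $2f_{\bar z}$. The remaining task is then the single scalar identity $f_{\bar z} = \mu_B f_z$ at $z_0$.

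Since both sides are linear in $(u_x,u_y)$, I would verify this identity by matching the coefficients of $u_x$ and of $u_y$ separately, which reduces the claim to the two complex equations $(1-b_{21})+i\,b_{11} = \mu_B\bigl[(1+b_{21})+i\,b_{11}\bigr]$ and $b_{11}+i(b_{12}+1) = \mu_B\bigl[-b_{11}+i(b_{12}-1)\bigr]$. Clearing denominators in either one and solving for $\mu_B$, I would recover precisely formula (\ref{Mu}); in each equation the imaginary parts match identically, whereas the real parts agree exactly because $\det B = -b_{11}^2 - b_{12}b_{21} = 1$, i.e. $b_{11}^2 + b_{12}b_{21} = -1$, for $B\in\mathbb B^{2\times 2}$. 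I expect this reconciliation to be the only delicate point: each coefficient equation independently pins down $\mu_B$, and their mutual consistency is guaranteed solely by the normalization $\det B = 1$, which is exactly the structural hypothesis on $\mathbb B^{2\times 2}$. To dispose of the degenerate case I would note that, by the conditions $b_{12}<0$ and $b_{21}>0$, the determinant $b_{12}-b_{21}-2$ of the real system defining $f_z=0$ is strictly negative, so $f_z(z_0)=0$ forces $u_x=u_y=0$ and hence $f_{\bar z}(z_0)=0$ as well; thus (\ref{B}) holds trivially there and no division by $f_z$ is ever needed. Equivalently, the whole statement amounts to saying that $B\mapsto\mu_B$ inverts the map $\mu\mapsto B^{\mu}$ of Proposition~\ref{pr}, which one may confirm directly by substituting $\mu=\mu_B$ into (\ref{Bmu}) and checking $B^{\mu_B}=B$.
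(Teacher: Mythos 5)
Your argument is correct, and it takes a genuinely different --- and cleaner --- route than the paper. The paper proves Proposition \ref{p} by forming the quotient $\mu=f_{\bar z}/f_z$, writing it as $\frac{\alpha\gamma+\beta\delta}{\gamma^2+\delta^2}+i\,\frac{\beta\gamma-\alpha\delta}{\gamma^2+\delta^2}$ with $\alpha,\beta,\gamma,\delta$ linear in $u_x,u_y$, expanding all the quadratic products, and using $b_{11}^2+b_{12}b_{21}=-1$ to extract the common factor $b_{21}u_x^2-2b_{11}u_xu_y-b_{12}u_y^2$ from numerator and denominator; this yields (\ref{Mu}), but only after dividing by $\gamma^2+\delta^2=|2f_z|^2$ and by that quadratic form, so the paper's computation tacitly assumes $\nabla u(Z_0)\neq 0$ (equivalently $f_z(z_0)\neq 0$), a hypothesis not present in the statement. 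You instead verify the product identity $f_{\bar z}=\mu_B\,f_z$ by matching the complex coefficients of $u_x$ and $u_y$: this keeps everything linear in the derivatives, involves no division by quantities that may vanish, and therefore covers the degenerate point $\nabla u(Z_0)=0$ automatically --- indeed your separate paragraph about $f_z(z_0)=0$ is then not even needed, since coefficient matching already gives the identity there. Two minor slips, neither damaging: first, in your second coefficient equation it is the \emph{real} parts that agree identically while $\det B=1$ enters through the \emph{imaginary} parts (the roles are swapped relative to the first equation, where the imaginary parts match for free); second, your closing remark that the proposition ``amounts to'' checking $B^{\mu_B}=B$ is not quite a proof on its own, because Proposition \ref{pr} runs in the opposite direction --- to convert it into Proposition \ref{p} one must also observe that the Beltrami constraint and the constraint (\ref{BM}) are both two-dimensional linear subspaces of the $(u_x,u_y,v_x,v_y)$-space, so the inclusion furnished by Proposition \ref{pr} upgrades to an equality of subspaces; your direct coefficient computation sidesteps this entirely and is the better argument.
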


Although later on we use only Proposition \ref{pr} and Remark
\ref{rem}, but Proposition \ref{p} has an independent interest for
the theory of the ge\-ne\-ra\-li\-zed Cauchy-Riemann equations
(\ref{BM}). Therefore, we give here its direct proof despite the
fact that it is perfectly routine and quite long.

\begin{proof} The equality (\ref{BM}) can be written as the system of
the equations
\begin{equation}\label{vx}
v_x\ =\ b_{11} u_x\ +\ b_{12} u_y
\end{equation}
\begin{equation}\label{vy}
v_y\ =\ b_{21} u_x\ -\ b_{11} u_y\ .
\end{equation}
Note also that it follows from the condition ${\rm det} B=1$ that
\begin{equation}\label{cond_det-1}
b^2_{11}\ +\ b_{12}b_{21}\ =\ -1\ .
\end{equation}

Next, we find that
$$
\mu\ :=\
\frac{f_{\overline{z}}}{f_{z}}=\frac{f_{x}+if_y}{f_{x}-if_y}=\frac{u_x+iv_x+i(u_y+iv_y)}{u_x+iv_x-i(u_y+iv_y)}=\frac{u_x-v_y
+i(v_x+u_y)}{u_x+v_y+i(v_x-u_y)}
$$
and, using the formulas (\ref{vx}) and  (\ref{vy}),  we get
$$
\mu\ =\ \frac{u_x-(b_{21} u_x-b_{11} u_y) +i((b_{11} u_x+ b_{12}
u_y)+u_y)}{u_x+(b_{21} u_x-b_{11} u_y)+i((b_{11} u_x+ b_{12}
u_y)-u_y)}\ =
$$
$$
=\ \frac{(1-b_{21})u_x+b_{11} u_y+i(b_{11} u_x+(1+b_{12} )u_y)
}{(1+b_{21})u_x-b_{11} u_y+i(b_{11} u_x+(b_{12}-1)u_y) }\ .
$$

Now, introducing the notations
$$\alpha=(1-b_{21})u_x+b_{11} u_y\ , \quad \quad \beta=b_{11} u_x+(1+b_{12} )u_y\ ,$$
and
$$\gamma=(1+b_{21})u_x-b_{11} u_y\ , \quad \quad \delta=b_{11} u_x+(b_{12}-1)u_y\ ,$$
we obtain the formula
$$
\mu\ =\ \frac{\alpha +i\beta}{\gamma +i\delta}\ =\ \frac{\alpha
\gamma +\beta\delta}{\gamma^2 +\delta^2}\ +\ i\ \frac{\beta
\gamma-\alpha \delta}{\gamma^2 +\delta^2}\ .
$$

Further, we calculate
$$\alpha\gamma\ =\ (1-b^2_{21})u^2_x\ +\ 2b_{11}b_{21}u_xu_y\ -\ b^2_{11}u^2_y\ ,$$
$$\beta\delta\ =\ b^2_{11}u^2_x\ +\ 2b_{11}b_{12}u_xu_y\ +\ (b^2_{12}-1)u^2_y\ ,$$
and
$$\beta\gamma=(b_{11}+b_{11}b_{21})u^2_x+(1+b_{12}+b_{21}+b_{12}b_{21}-b_{11}^2)u_xu_y-(b_{11}+b_{11}b_{12})u^2_y\ , $$
$$\alpha\delta=(b_{11}-b_{11}b_{21})u^2_x+(b_{11}^2+b_{12}+b_{21}-b_{12}b_{21}-1)u_xu_y+(b_{11}b_{12}-b_{11})u^2_y\ , $$
and, consequently,
$$\alpha \gamma+\beta \delta=(1-b^2_{21}+b^2_{11})u^2_x+2b_{11}(b_{21}+b_{12})u_xu_y+(b^2_{12}-b^2_{11}-1)u^2_y\ ,$$
$$\beta \gamma-\alpha \delta=2b_{11}b_{21}u^2_x+2(1+b_{12}b_{21}-b^2_{11})u_xu_y-2b_{11}b_{12}u^2_y\ , $$
$$\gamma^2+\delta^2=(1+b^2_{21}+2b_{21}+b^2_{11})u^2_x-2b_{11}(2+  b_{21}-b_{12}         )u_xu_y+(b^2_{11}+b^2_{12}-2b_{12}+1)u^2_y\ . $$

Moreover, in view of the condition (\ref{cond_det-1}), we have that
$$\alpha \gamma+\beta \delta=-b_{21}(b_{21}+b_{12})u^2_x+2b_{11}(b_{21}+b_{12})u_xu_y+b_{12}(b_{21}+b_{12})u^2_y=$$

$$=-(b_{21}+b_{12})(b_{21}u^2_x-2b_{11}u_xu_y-b_{12}u^2_y)\ ,$$

$$ \beta \gamma-\alpha \delta=2b_{11}b_{21}u^2_x-4b^2_{11}u_xu_y-2b_{11}b_{12}u^2_y= $$

$$=2b_{11}(b_{21}u^2_x-2b_{11}u_xu_y-b_{12}u^2_y)\ , $$

$$\gamma^2+\delta^2=b_{21}(b_{21}-b_{12}+2)u^2_x+2b_{11}(b_{12}-b_{21}-2)u_xu_y+b_{12}(b_{12}-b_{21}-2)u^2_y=$$

$$=(b_{12}-b_{21}-2)(-b_{21}u^2_x+2b_{11}u_xu_y+b_{12}u^2_y) =-(b_{12}-b_{21}-2)(b_{21}u^2_x-2b_{11}u_xu_y-b_{12}u^2_y)\ . $$

Thus,
$$
\mu\ =\ \mu_B\ =\
\frac{-(b_{21}+b_{12})(b_{21}u^2_x-2b_{11}u_xu_y-b_{12}u^2_y)}{-(b_{12}-b_{21}-2)(b_{21}u^2_x-2b_{11}u_xu_y-b_{12}u^2_y)}\
+\
$$
$$
+\
\frac{2ib_{11}(b_{21}u^2_x-2b_{11}u_xu_y-b_{12}u^2_y)}{-(b_{12}-b_{21}-2)(b_{21}u^2_x-2b_{11}u_xu_y-b_{12}u^2_y)}
$$
and, finally, we obtain from the latter the formula (\ref{Mu}).
\end{proof}

\section{On regular homeomorphic solutions}
Given a domain $D$ in $\mathbb R^2$, we say that a pair $(u,v)$ of
functions $u:D\to\mathbb R$ and $v:D\to\mathbb R$ in the class
$W^{1,1}_{\rm loc}$ is a {\bf regular homeomorphic solution} of the
generalized Cauchy-Riemann equation (\ref{BM})  in $D$ with a matrix
valued coefficient $B:D\to\mathbb M^{2\times 2}$ if $(u,v)$
satisfies (\ref{BM}) and $\nabla\, u\neq 0$ and $\nabla\, v\neq 0$
a.e. in $D$ and, moreover, the correspondence $(x,y)\mapsto(u,v)$ is
a homeomorphism (embedding) of $D$ into $\mathbb R^2$.

\medskip

In this section, we give consequences on regular homeomorphic
solutions for the generalized Cauchy-Riemann equation (\ref{BM})
based on a theo\-ry of regular homeomorphic solutions to the
Beltrami equations (\ref{B}).

\medskip

The {\bf dilatation quotient} of the equation (\ref{B}) is the
quantity
\begin{equation}\label{eqKPRS1.1}K_{\mu}(z)\ :=\ \frac{1+|\mu(z)|}{1-|\mu(z)|}\ .\end{equation}
The Beltrami equation is called {\bf degenerate} if ${\rm ess}\,{\rm
sup}\,K_{\mu}(z)=\infty$.

\medskip

It is known that if $K_{\mu}$ is bounded, then the Beltrami equation
has ho\-meo\-mor\-phic solutions with generalized derivatives by
Sobolev, see e.g. monographs \cite{Ah}, \cite{BGMR} and \cite{LV}.
Recently, a series of effective criteria for existence of
homeomorphic solutions have been also established for degenerate
Beltrami equations, see e.g. historic comments with relevant
references in monographs \cite{AIM}, \cite{GRSY} and \cite{MRSY}.

\medskip

These criteria were formulated both in terms of $K_{\mu}$ and the
more refined quantity that takes into account not only the modulus
of the complex coefficient $\mu$ but also its argument
\begin{equation}\label{eqTangent} K^T_{\mu}(z,z_0)\ :=\
\frac{\left|1-\frac{\overline{z-z_0}}{z-z_0}\mu (z)\right|^2}{1-|\mu
(z)|^2} \end{equation} that is called the {\bf tangent dilatation
quotient} of the Beltrami equation with respect to a point
$z_0\in\mathbb C$, see e.g. the articles \cite{AC$_2$},
\cite{GMR}--\cite{GRSY22+}, \cite{GRY}--\cite{ER},
\cite{KPR1}--\cite{KPRS}, \cite{Le}, \cite{RW} and
\cite{RSY1}--\cite{RSY10}. Note that
\begin{equation}\label{eqConnect} K^{-1}_{\mu}(z)\leqslant K^T_{\mu}(z,z_0) \leqslant K_{\mu}(z)
\ \ \ \ \ \ \ \forall\ z\in D\, ,\ z_0\in \Bbb C\ .\end{equation} A
geometrical sense of $K^T_{\mu}$ can be found in the monographs
\cite{GRSY} and \cite{MRSY}.

\medskip

Let $D$ be a domain in the complex plane ${\Bbb C}$. Recall that a
function $f:D\to\mathbb C$ in the Sobolev class $W^{1,1}_{\rm loc}$
is called a {\bf regular solution} of the Beltrami equation
(\ref{B}) in $D$ if $f$ satisfies (\ref{B}) a.e. in $D$ and its
Jacobian $J_f(z)=|f_z|^2-|f_{\bar z}|^2$ is positive a.e. in $D$.
Hereafter $d{\cal L}(z)$ corresponds to the Lebesgue measure.


\begin{lemma}\label{lem1} Let $D$ be a domain in the plane and let $B: D\to{\mathbb B}^{2\times 2}$
be a measurable function. Suppose that $K_{\mu_B}\in L_{\rm
loc}^1(D)$ and, for each $z_0\in D$,
\begin{equation}\label{eqT}
\int\limits_{{\varepsilon}<|z-z_0|<{\varepsilon}_0}\
K^T_{{\mu_B}}(z,z_0)\cdot{\psi}^2_{z_0,{\varepsilon}}(|z-z_0|)\
d{\cal L}(z)\ =\ o(I^2_{z_0}({\varepsilon})) \ \ \  \ \ \ \hbox{ as
${\varepsilon}\to 0$}\ ,\end{equation} where $\mu_B$ is given by the
formula (\ref{Mu}) and
${\psi}_{z_0,{\varepsilon}}:(0,\varepsilon_0)\to(0,\infty),$
${\varepsilon}\in(0,{\varepsilon}_0),$
$\varepsilon_0=\varepsilon(z_0)>0,$ is a family of measurable
functions such that
\begin{equation}\label{eqI}
I_{z_0}({\varepsilon})\ \colon =\
\int\limits_{{\varepsilon}}^{{\varepsilon}_0}{\psi}_{z_0,{\varepsilon}}(t)\
dt\ <\ \infty\ \ \ \ \ \ \forall\
{\varepsilon}\in(0,{\varepsilon}_0)\ .\end{equation}

Then the generalized Cauchy-Riemann equation (\ref{BM}) has a
regular homeomorphic solution $(u,v)$ such that $u={\rm Re}\,f$ and
$v={\rm Im}\,f$, where $f=f_{\mu_B}$ is a regular homeomorphic
solution of the Beltrami equation (\ref{B}) with $\mu=\mu_B$.

Furthermore, if $K_{\mu_B}\in L^1(D)$ and (\ref{eqT}) holds for all
$z_0\in \overline D$, then $(u,v)$ can be extended by continuity to
the whole plane as a regular homeomorphic solution of (\ref{BM})
with $B$ extended by Hodge matrix $\mathbb H$ outside~$D$.
\end{lemma}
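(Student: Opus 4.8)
The plan is to reduce the statement to the known existence theory for degenerate Beltrami equations by passing through the pointwise algebraic correspondence $B\leftrightarrow\mu_B$ established in Section~2. First I would observe that, since $B(z)\in\mathbb B^{2\times 2}$ for a.e.\ $z\in D$, the complex coefficient $\mu_B$ defined by (\ref{Mu}) is well defined and measurable, being a continuous function of the entries of $B$ whose denominator $b_{12}-b_{21}-2<-2$ never vanishes in view of $b_{12}<0$, $b_{21}>0$; moreover it satisfies the ellipticity bound $|\mu_B(z)|<1$ a.e.\ by Remark~\ref{rem}. Thus the hypotheses $K_{\mu_B}\in L^1_{\rm loc}(D)$ together with the tangent-dilatation integral condition (\ref{eqT})--(\ref{eqI}) are precisely the hypotheses under which the degenerate Beltrami equation (\ref{B}) with $\mu=\mu_B$ is known to admit a regular homeomorphic solution $f=f_{\mu_B}\in W^{1,1}_{\rm loc}(D)$, i.e.\ one with $J_f=|f_z|^2-|f_{\bar z}|^2>0$ a.e.; I would invoke the corresponding existence criterion from the monographs \cite{GRSY}, \cite{MRSY} and the cited articles.

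Next I would transfer this solution back to (\ref{BM}). Putting $u:={\rm Re}\,f$ and $v:={\rm Im}\,f$, at a.e.\ point the regularity of $f$ gives $J_f>0$, hence $|f_z|>|f_{\bar z}|$, so Proposition~\ref{pr} applies and yields that $(u,v)$ satisfies (\ref{BM}) a.e.\ with the matrix $B^{\mu}$ of (\ref{Bmu}) evaluated at $\mu=\mu_B$. It then remains to check the round-trip identity $B^{\mu_B}=B$. Here the complete characterization of $\mathbb B^{2\times 2}$ enters: the map $B\mapsto\mu_B$ of (\ref{Mu}) and the map $\mu\mapsto B^\mu$ of (\ref{Bmu}) are mutually inverse bijections between $\mathbb B^{2\times 2}$ and the unit disk $\{|\mu|<1\}$. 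Since Proposition~\ref{p} shows $\mu_{B^\mu}=\mu$ and the matrices produced by (\ref{Bmu}) lie in $\mathbb B^{2\times 2}$, starting from $B\in\mathbb B^{2\times 2}$ we recover $B^{\mu_B}=B$; I would confirm this by direct substitution of (\ref{Mu}) into (\ref{Bmu}), which is short once $\det B=1$ and $b_{22}=-b_{11}$ are used. Consequently $(u,v)$ solves the prescribed equation (\ref{BM}).

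I would then read off the remaining properties of a \emph{regular homeomorphic solution} directly from those of $f$. The Sobolev regularity $(u,v)\in W^{1,1}_{\rm loc}$ is inherited from $f\in W^{1,1}_{\rm loc}$. Since $J_f=u_xv_y-u_yv_x>0$ a.e., neither gradient can vanish on a set of positive measure, because $\nabla u=0$ or $\nabla v=0$ at a point forces $J_f=0$ there; hence $\nabla u\neq0$ and $\nabla v\neq0$ a.e. Under the isomorphism $\mathbb R^2\leftrightarrow\mathbb C$ the correspondence $(x,y)\mapsto(u,v)$ is literally the map $z\mapsto f(z)$, which is a homeomorphism (embedding) of $D$ because $f$ is, giving the first assertion. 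For the concluding extension statement I would appeal to the extension version of the same Beltrami existence theorem: under $K_{\mu_B}\in L^1(D)$ and validity of (\ref{eqT}) for every $z_0\in\overline D$, the solution $f$ extends by continuity to the whole plane as a regular homeomorphic solution of a Beltrami equation with coefficient $0$ outside $D$, so $f$ is conformal there. The only consistency point is that substituting $\mu=0$ into (\ref{Bmu}) returns exactly the Hodge matrix $\mathbb H$ of (\ref{H}); thus extending $\mu_B$ by $0$ corresponds under the bijection to extending $B$ by $\mathbb H$, and the transferred pair is the desired global regular homeomorphic solution.

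The main obstacle is not the algebra, since the round-trip identity $B^{\mu_B}=B$ and the reading-off of homeomorphy and of the non-vanishing of the gradients are routine; it is rather the correct invocation of the degenerate Beltrami existence (and extension) theorem, where one must match the hypotheses (\ref{eqT})--(\ref{eqI}) and $K_{\mu_B}\in L^1_{\rm loc}$ exactly to a cited criterion that delivers a solution which is simultaneously regular, homeomorphic, and, in the second part, continuously extendable with the prescribed boundary behaviour.
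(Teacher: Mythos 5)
Your proposal is correct and follows essentially the same route as the paper: invoke the known existence theory for degenerate Beltrami equations (the paper cites Lemma 3 and Remark 2 of \cite{RSY6}) to produce a regular homeomorphic solution $f$ of (\ref{B}) with $\mu=\mu_B$, transfer it to (\ref{BM}) via Proposition \ref{pr} together with Remark \ref{rem}, and deduce $\nabla u\neq 0$ and $\nabla v\neq 0$ a.e. from the positivity of the Jacobian, exactly as in the paper's proof. Your explicit verification of the round-trip identity $B^{\mu_B}=B$ (and of the fact that $\mu=0$ corresponds to $B=\mathbb H$ in the extension part) only makes precise what the paper leaves implicit in its appeal to Remark \ref{rem} and Remark \ref{rem1}.
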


\begin{proof}
Indeed, by Lemma 3 and Remark 2 in \cite{RSY6} the Beltrami equation
(\ref{B}) has a regular homeomorphic solution $f$ in $D$ under the
given hypotheses on $\mu :=\mu_B$ in (\ref{Mu}). By Proposition
\ref{pr}, see also Remark \ref{rem}, the pair $(u,v)$ of the
functions $u={\rm Re}\,f$ and $v={\rm Im}\,f$ is a solution of the
generalized Cauchy-Riemann equation (\ref{BM}). This solution is
regular because the Jacobian matrix of the function $f=u+iv$,
\begin{equation}\label{J}
\begin{pmatrix}
u_x & u_y  \\
v_x & v_y\
\end{pmatrix}\ ,
\end{equation}
interpreted as a mapping from $\mathbb R^2$ into $\mathbb R^2$ has
determinant zero at a point $z=x+iy$ in $D$ if either $\nabla
u(z)=0$ or $\nabla v(z)=0$ at the point. \end{proof}

\begin{remark}\label{rem1} Hereafter, we assume that $K^T_{\mu_B}(z,z_0)$ is extended by
$1$ outside~$D$. Note also that if either $K_{\mu_B}\in L^1(D)$ in a
bounded domain $D$ or $\mu_B$ has compact support, then the
homeomorphic solution $f_{\mu_B}$ of the Beltrami equation (\ref{B})
with $\mu=\mu_B$ extended by zero outside~$D$ can be chosen with the
hydro\-dynamic normalization: $f_{\mu_B}(z)=z+o(1)$ as $z\to\infty$,
see \cite{GRSY22} and \cite{GRSY22+}. \end{remark}

Applying interconnections between various integral conditions that
are relevant to (\ref{eqT}), we are able to obtain a number of
effective criteria for solvability, representation and regularity of
solutions for generalized Cauchy-Riemann equation (\ref{BM}).

\medskip

Recall first of all that a real-valued function $ \varphi $ in a
domain $D$ of ${\mathbb C}$ is called of {\bf bounded mean
oscillation} in $D$, abbr. $\varphi\in{\rm BMO}(D)$, if $\varphi\in
L_{\rm loc}^1(D)$ and
\begin{equation}\label{lasibm_2.2_1}\Vert \varphi\Vert_{*}:=
\sup\limits_{B}{\frac{1}{|B|}}\int\limits_{B}|\varphi(z)-\varphi_{B}|\,d{\cal
L}(z)<\infty\,,\end{equation} where the supremum is taken over all
discs $B$ in $D$ and
$$\varphi_{B}={\frac{1}{|B|}}\int\limits_{B}\varphi(z)\,d{\cal L}(z)\,.$$ We write $\varphi\in{\rm BMO}_{\rm loc}(D)$ if
$\varphi\in{\rm BMO}(U)$ for each relatively compact subdomain $U$
of $D$. We also write sometimes for short BMO and ${\rm BMO}_{\rm
loc }$, respectively. Finally, we write $\varphi\in{\rm
BMO}(\overline D)$ if $\varphi_*\in{\rm BMO}(D_*)$ for some
extension $\varphi_*$ of the function $\varphi$ into a domain $D_*$
containing $\overline D$.

The class BMO was introduced by John and Nirenberg (1961) in the
paper \cite{JN} and soon became an important concept in harmonic
analysis, partial diffe\-rential equations and related areas, see
e.g. articles \cite{AIKM}, \cite{BN}, \cite{GRSY22}, \cite{IKM},
\cite{MRV}, \cite{MU}, \cite{RSY9}, \cite{RSY10} and monographs
\cite{HKM} and \cite{RR}.


A function $\varphi$ in BMO is said to have {\bf vanishing mean
oscillation}, abbr. $\varphi\in{\rm VMO}$, if the supremum in
(\ref{lasibm_2.2_1}) taken over all balls $B$ in $D$ with
$|B|<\varepsilon$ converges to $0$ as $\varepsilon\to0$. VMO has
been introduced by Sarason in \cite{Sar}. There are a number of
papers devoted to the study of partial differential equations with
coefficients of the class VMO, see e.g. \cite{CFL}, \cite{IS},
\cite{Pal}, \cite{Ra$_1$} and \cite{Ra$_2$}.

\begin{remark}\label{rem2} Note that $W^{\,1,2}\left({{D}}\right) \subset VMO
\left({{D}}\right),$ see e.g. \cite{BN}. \end{remark}

Following \cite{IR}, we say that a locally integrable function
$\varphi:D\to{\Bbb R}$ has {\bf finite mean oscillation} at a point
$z_0\in D$, abbr. $\varphi\in{\rm FMO}(z_0)$, if
\begin{equation}\label{FMO_eq2.4}\overline{\lim\limits_{\varepsilon\to0}}\ \ \
\dashint_{B(x_0,\varepsilon)}|{\varphi}(z)-\widetilde{\varphi}_{\varepsilon}(z_0)|\,d{\cal
L}(z)<\infty\,,\end{equation} where
\begin{equation}\label{FMO_eq2.5}
\widetilde{\varphi}_{\varepsilon}(z_0)=\dashint_{B(z_0,\varepsilon)}
{\varphi}(z)\,d{\cal L}(z)\end{equation} is the mean value of the
function ${\varphi}(z)$ over disk $B(z_0,\varepsilon):=\{
z\in{\mathbb C}: |z-z_0|<\varepsilon\}$. Thereafter we assume that
$\varphi$ is integrable ar least in a neighborhood of the point
$z_0$.

\medskip

The following statement is obvious by the triangle inequality.

\begin{proposition}\label{pr1}
If, for a  collection of numbers $\varphi_{\varepsilon}\in{\Bbb R}$,
$\varepsilon\in(0,\varepsilon_0]$,
\begin{equation}\label{FMO_eq2.7}\overline{\lim\limits_{\varepsilon\to0}}\ \ \
\dashint_{B(z_0,\varepsilon)}|\varphi(z)-\varphi_{\varepsilon}|\,d{\cal
L}(z)<\infty\,,\end{equation} then $\varphi $ is of finite mean
oscillation at $z_0$.
\end{proposition}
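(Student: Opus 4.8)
The plan is to deduce the finite-mean-oscillation condition \eqref{FMO_eq2.4}, which is phrased in terms of the genuine mean value $\widetilde{\varphi}_{\varepsilon}(z_0)$ from \eqref{FMO_eq2.5}, directly from the hypothesis \eqref{FMO_eq2.7}, which instead involves the arbitrary numbers $\varphi_{\varepsilon}$. The only ingredient is the triangle inequality, used twice.

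First I would insert and subtract $\varphi_{\varepsilon}$ in the integrand of \eqref{FMO_eq2.4} and split
\[
|\varphi(z)-\widetilde{\varphi}_{\varepsilon}(z_0)|\ \leq\ |\varphi(z)-\varphi_{\varepsilon}|\ +\ |\varphi_{\varepsilon}-\widetilde{\varphi}_{\varepsilon}(z_0)|\ .
\]
The second summand is a constant in $z$, so it remains to control it by the same average. Since $\varphi_{\varepsilon}$ is a constant it is reproduced by the averaging operator, whence by the definition \eqref{FMO_eq2.5} of $\widetilde{\varphi}_{\varepsilon}(z_0)$,
\[
|\varphi_{\varepsilon}-\widetilde{\varphi}_{\varepsilon}(z_0)|\ =\ \left|\dashint_{B(z_0,\varepsilon)}\bigl(\varphi_{\varepsilon}-\varphi(z)\bigr)\,d{\cal L}(z)\right|\ \leq\ \dashint_{B(z_0,\varepsilon)}|\varphi(z)-\varphi_{\varepsilon}|\,d{\cal L}(z)\ ,
\]
where the last step is monotonicity of the integral. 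The integrability of $\varphi$ in a neighborhood of $z_0$, assumed just after \eqref{FMO_eq2.5}, guarantees that all these averages are finite and well defined.

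Averaging the first displayed inequality over $B(z_0,\varepsilon)$ and inserting the bound for the constant term then gives the factor-$2$ estimate
\[
\dashint_{B(z_0,\varepsilon)}|\varphi(z)-\widetilde{\varphi}_{\varepsilon}(z_0)|\,d{\cal L}(z)\ \leq\ 2\,\dashint_{B(z_0,\varepsilon)}|\varphi(z)-\varphi_{\varepsilon}|\,d{\cal L}(z)\ .
\]
Passing to the limit superior as $\varepsilon\to0$ and using \eqref{FMO_eq2.7} yields the finiteness required in \eqref{FMO_eq2.4}, i.e. $\varphi\in{\rm FMO}(z_0)$. There is no genuine obstacle here; the only point deserving care is that $\varphi_{\varepsilon}$ must be a genuine constant, since this is precisely what lets the averaging operator reproduce it and thereby bound the difference $\varphi_{\varepsilon}-\widetilde{\varphi}_{\varepsilon}(z_0)$ by a quantity already appearing in the hypothesis.
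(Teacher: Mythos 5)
Your proof is correct and is exactly the argument the paper has in mind: the paper dismisses Proposition~\ref{pr1} as ``obvious by the triangle inequality,'' and your two-step use of the triangle inequality (splitting off $|\varphi_{\varepsilon}-\widetilde{\varphi}_{\varepsilon}(z_0)|$ and bounding this constant by the same average, yielding the factor~$2$) is precisely that standard argument, spelled out in full. No discrepancy to report.
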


In particular, choosing here  $\varphi_{\varepsilon}\equiv0$,
$\varepsilon\in(0,\varepsilon_0]$ in Proposition~\ref{pr1}, we
obtain the following.

\begin{corollary}\label{cor1}
If, for a point $z_0\in D$,
\begin{equation}\label{FMO_eq2.8}\overline{\lim\limits_{\varepsilon\to 0}}\ \ \
\dashint_{B(z_0,\varepsilon)}|\varphi(z)|\,d{\cal L}(z)\ < \infty\,,
\end{equation} then $\varphi$ has finite mean oscillation at
$z_0$. \end{corollary}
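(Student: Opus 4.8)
The plan is to obtain this corollary as an immediate specialization of Proposition \ref{pr1}, which already carries out all the real work. That proposition asserts that $\varphi$ lies in $\mathrm{FMO}(z_0)$ as soon as one can exhibit a single family of constants $\varphi_{\varepsilon}\in\mathbb{R}$, $\varepsilon\in(0,\varepsilon_0]$, making the limit superior in (\ref{FMO_eq2.7}) finite; the comparison between such an arbitrary constant and the genuine mean value $\widetilde{\varphi}_{\varepsilon}(z_0)$ that enters the definition (\ref{FMO_eq2.4}) of finite mean oscillation is handled there by the triangle inequality. Thus I need only choose the constants wisely.

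The single step is to take $\varphi_{\varepsilon}\equiv 0$ for every $\varepsilon\in(0,\varepsilon_0]$. With this choice $|\varphi(z)-\varphi_{\varepsilon}|=|\varphi(z)|$ pointwise, so the averaged deviation in (\ref{FMO_eq2.7}) reduces exactly to the averaged modulus in (\ref{FMO_eq2.8}). The hypothesis (\ref{FMO_eq2.8}) of the corollary therefore says precisely that the hypothesis of Proposition \ref{pr1} is met for this particular family, and the conclusion $\varphi\in\mathrm{FMO}(z_0)$ follows at once.

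I do not anticipate any obstacle, since the substance of the argument, namely the estimate $\dashint_{B(z_0,\varepsilon)}|\varphi-\widetilde{\varphi}_{\varepsilon}(z_0)|\,d{\cal L}\le 2\,\dashint_{B(z_0,\varepsilon)}|\varphi-\varphi_{\varepsilon}|\,d{\cal L}$ obtained from $|\varphi_{\varepsilon}-\widetilde{\varphi}_{\varepsilon}(z_0)|\le\dashint_{B(z_0,\varepsilon)}|\varphi-\varphi_{\varepsilon}|\,d{\cal L}$, is already encapsulated in Proposition \ref{pr1}. The only point to keep in mind is that the definition (\ref{FMO_eq2.4}) presupposes local integrability of $\varphi$ near $z_0$, but (\ref{FMO_eq2.8}) supplies exactly this, so the conclusion is well-posed.
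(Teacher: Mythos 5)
Your proof is correct and coincides with the paper's own argument: the paper derives this corollary precisely by taking $\varphi_{\varepsilon}\equiv 0$, $\varepsilon\in(0,\varepsilon_0]$, in Proposition~\ref{pr1}, exactly as you do. Your added remark on the triangle-inequality estimate merely spells out what Proposition~\ref{pr1} already encapsulates, so nothing is missing.
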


Recall that a point $z_0\in D$ is called a {\bf Lebesgue point} of a
function $\varphi:D\to{\Bbb R}$ if $\varphi$ is integrable in a
neighborhood of $z_0$ and \begin{equation}\label{FMO_eq2.7a}
\lim\limits_{\varepsilon\to 0}\ \ \ \dashint_{B(z_0,\varepsilon)}
|\varphi(z)-\varphi(z_0)|\,d{\cal L}(z)=0\,.\end{equation} It is
known that, almost every point in $D$ is a Lebesgue point for every
function $\varphi\in L_1(D)$. Thus, we have by Proposition~\ref{pr1}
the next corollary.

\begin{corollary}\label{cor2}
Every locally integrable function $\varphi:D\to{\Bbb R}$ has a
finite mean oscillation at almost every point in $D$.
\end{corollary}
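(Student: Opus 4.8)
The plan is to deduce the statement directly from the Lebesgue differentiation theorem combined with Proposition~\ref{pr1}. The key observation is that at any Lebesgue point $z_0$ the \emph{constant} collection $\varphi_{\varepsilon}\equiv\varphi(z_0)$, $\varepsilon\in(0,\varepsilon_0]$, already fulfils the hypothesis of Proposition~\ref{pr1}: by the very definition of a Lebesgue point in (\ref{FMO_eq2.7a}) one has
$$
\lim\limits_{\varepsilon\to0}\ \dashint_{B(z_0,\varepsilon)}|\varphi(z)-\varphi(z_0)|\,d{\cal L}(z)\ =\ 0\ ,
$$
so that the corresponding upper limit in (\ref{FMO_eq2.7}) is finite (indeed, it equals zero). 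Hence Proposition~\ref{pr1}, applied with $\varphi_{\varepsilon}\equiv\varphi(z_0)$, yields at once that $\varphi$ has finite mean oscillation at every Lebesgue point of $\varphi$.

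First I would reduce matters from local to global integrability. Since $D$ is a domain in $\mathbb R^2$, it is $\sigma$-compact and can be covered by a countable family of open discs $B_k$, $k\in\mathbb N$, with $\overline{B_k}\subset D$, so that $\varphi\in L^1(B_k)$ by local integrability. For each fixed $k$, the Lebesgue differentiation theorem guarantees that almost every point of $B_k$ is a Lebesgue point of $\varphi$; moreover, for an interior point $z_0$ of $B_k$ the small discs $B(z_0,\varepsilon)$ lie in $B_k$, so being a Lebesgue point of $\varphi\big|_{B_k}$ is the same as being a Lebesgue point of $\varphi$. Denoting by $E_k\subset B_k$ the corresponding null exceptional set and setting $E:=\bigcup_k E_k$, we obtain a set $E$ with ${\cal L}(E)=0$ such that every point of $D\setminus E$ is a Lebesgue point of $\varphi$.

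Combining the two steps, for every $z_0\in D\setminus E$ the function $\varphi$ has finite mean oscillation at $z_0$, and since ${\cal L}(E)=0$ this holds at almost every point of $D$, as required. I do not expect any genuine obstacle here: the only point needing a little care is the passage from global to local integrability, handled by the countable covering above, after which the conclusion is an immediate consequence of Proposition~\ref{pr1} with the constant choice $\varphi_{\varepsilon}\equiv\varphi(z_0)$.
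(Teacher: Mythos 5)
Your proof is correct and follows essentially the same route as the paper: apply Proposition~\ref{pr1} with the constant collection $\varphi_{\varepsilon}\equiv\varphi(z_0)$ at each Lebesgue point, then invoke the Lebesgue differentiation theorem to conclude that almost every point of $D$ is such a point. The only difference is that you spell out the localization of the Lebesgue point theorem from $L^1$ to $L^1_{\rm loc}$ via a countable covering, which the paper leaves implicit.
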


\begin{remark}\label{rem3}
Note that the function $\varphi(z)=\log\left(1/|z|\right)$ belongs
to BMO in the unit disk $\Delta$, see e.g. \cite{RR}, p. 5, and
hence also to FMO. However,
$\widetilde{\varphi}_{\varepsilon}(0)\to\infty$ as
$\varepsilon\to0$, showing that condition (\ref{FMO_eq2.8}) is only
sufficient but not necessary for a function $\varphi$ to be of
finite mean oscillation at $z_0$. Clearly, ${\rm BMO}(D)\subset{\rm
BMO}_{\rm loc}(D)\subset{\rm FMO}(D)$ and as well-known ${\rm
BMO}_{\rm loc}\subset L_{\rm loc}^p$ for all $p\in[1,\infty)$, see
e.g. \cite{JN} or \cite{RR}. However, FMO is not a subclass of
$L_{\rm loc}^p$ for any $p>1$ but only of $L_{\rm loc}^1$. Thus, the
class FMO is much more wider than ${\rm BMO}_{\rm loc}$.
\end{remark}

\medskip

Versions of the next lemma has been first proved for the class BMO
in \cite{RSY10}. For the FMO case, see papers \cite{IR}, \cite{RS},
\cite{RSY5}, \cite{RSY8}  and monographs \cite{GRSY} and
\cite{MRSY}.

\medskip
\begin{lemma}\label{lem2} Let $D$ be a domain in ${\Bbb C}$ and let
$\varphi:D\to{\Bbb R}$ be a  non-negative function  of the class
${\rm FMO}(z_0)$ for some $z_0\in D$. Then
\begin{equation}\label{eq13.4.5}\int\limits_{\varepsilon<|z-z_0|<\varepsilon_0}\frac{\varphi(z)\,d{\cal L}(z)}
{\left(|z-z_0|\log\frac{1}{|z-z_0|}\right)^n}=O\left(\log\log\frac{1}{\varepsilon}\right)\
\quad\text{as}\quad\varepsilon\to 0\end{equation} for some
$\varepsilon_0\in(0,\delta_0)$ where $\delta_0=\min({\rm
exp}\,{-e},d_0)$, $d_0=\sup\limits_{z\in D}|z-z_0|$.
\end{lemma}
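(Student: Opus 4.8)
The plan is to reduce the weighted integral to FMO-control of the averages of $\varphi$ over concentric discs, and then to sum that control over a dyadic decomposition of the annulus, crucially using $\varphi\geq 0$. First I would translate so that $z_0=0$ and record that the finiteness of the $\limsup$ in (\ref{FMO_eq2.4}) supplies constants $C<\infty$ and $\varepsilon_*>0$ with
\[
\dashint_{B(0,r)}|\varphi(z)-\widetilde\varphi_{r}(0)|\,d{\cal L}(z)\ \leq\ C\qquad\text{for all }r\in(0,\varepsilon_*)\ ,
\]
where $\widetilde\varphi_r(0)$ is the mean value from (\ref{FMO_eq2.5}). I would then fix $\varepsilon_0\in(0,\delta_0)$ with $\varepsilon_0\leq\varepsilon_*$; the bound $\varepsilon_0<\exp(-e)$ keeps $\log(1/|z|)>e>1$ throughout the annulus, so that the weight and the final $\log\log$ are positive and well defined.

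\medskip

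The central step is to show that the averages grow at most logarithmically in $k$, even though they may genuinely diverge as $r\to0$, as the example $\log(1/|z|)$ in Remark \ref{rem3} shows. Comparing two consecutive dyadic scales $r$ and $2r$ and using $\dashint_{B(0,r)}g\leq 2^n\dashint_{B(0,2r)}g$ for $g\geq0$, I would estimate
\[
|\widetilde\varphi_{r}(0)-\widetilde\varphi_{2r}(0)|\ \leq\ \dashint_{B(0,r)}|\varphi-\widetilde\varphi_{2r}(0)|\,d{\cal L}\ \leq\ 2^n\dashint_{B(0,2r)}|\varphi-\widetilde\varphi_{2r}(0)|\,d{\cal L}\ \leq\ 2^nC\ .
\]
Telescoping from the scale $\varepsilon_0$ down to $2^{-k}\varepsilon_0$ then gives $\widetilde\varphi_{2^{-k}\varepsilon_0}(0)\leq\widetilde\varphi_{\varepsilon_0}(0)+2^nCk$, that is, $\widetilde\varphi_{2^{-k}\varepsilon_0}(0)=O(k)$ as $k\to\infty$.

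\medskip

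Next I would cut $\{\varepsilon<|z|<\varepsilon_0\}$ into the dyadic rings $A_k=\{2^{-(k+1)}\varepsilon_0\leq|z|<2^{-k}\varepsilon_0\}$ for $k=0,1,\dots,K$ with $K\approx\log_2(\varepsilon_0/\varepsilon)$. On $A_k$ the weight is bounded by $C'2^{kn}k^{-n}$ (since there $|z|\geq 2^{-(k+1)}\varepsilon_0$ and $\log(1/|z|)\geq k\log 2$), while non-negativity yields
\[
\int\limits_{A_k}\varphi\,d{\cal L}\ \leq\ \int\limits_{B(0,2^{-k}\varepsilon_0)}\varphi\,d{\cal L}\ =\ |B(0,2^{-k}\varepsilon_0)|\,\widetilde\varphi_{2^{-k}\varepsilon_0}(0)\ \leq\ C''2^{-kn}\cdot O(k)\ .
\]
Multiplying, the measure factor $2^{-kn}$ cancels the weight growth $2^{kn}$, so the contribution of $A_k$ is $O(k\cdot k^{-n})=O(k^{1-n})$. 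Summing over $k\leq K$ gives, in the planar case $n=2$, $\sum_{k=1}^{K}O(1/k)=O(\log K)=O(\log\log(1/\varepsilon))$, which is precisely the claimed bound.

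\medskip

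The main obstacle is the second step. Since the averages $\widetilde\varphi_{r}(0)$ need not stay bounded, one cannot simply extract a constant from $\int_{B}\varphi\,d{\cal L}$; the whole point is the telescoping comparison that converts the \emph{bounded} oscillation at each dyadic scale into the \emph{logarithmic} growth $\widetilde\varphi_{2^{-k}\varepsilon_0}(0)=O(k)$. It is exactly the interplay of this $O(k)$ growth with the $k^{-n}$ decay produced by the $\log$-weight that yields the harmonic series and hence the $\log\log$ rate. Minor care is also needed at the ends of the range of $k$, matching $2^{-K}\varepsilon_0$ with $\varepsilon$, and in keeping all logarithms positive, which is what the hypothesis $\varepsilon_0<\exp(-e)$ secures.
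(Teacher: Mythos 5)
Your proof is correct, and it is essentially the standard argument: the paper itself states this lemma without proof, citing \cite{IR}, \cite{RSY5}, \cite{RSY8} and the monographs \cite{GRSY}, \cite{MRSY}, and the proof given there is exactly your scheme --- uniform FMO bound at small scales, the dyadic comparison $|\widetilde\varphi_{r}(z_0)-\widetilde\varphi_{2r}(z_0)|\leqslant 2^{n}C$ telescoped to $\widetilde\varphi_{2^{-k}\varepsilon_0}(z_0)=O(k)$, and summation over dyadic rings where the weight $2^{kn}k^{-n}$ cancels the measure $2^{-kn}$ to leave the harmonic series $\sum O(k^{1-n})=O(\log K)$ in the planar case $n=2$. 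Your use of non-negativity to pass from $\int_{A_k}\varphi$ to $\int_{B(z_0,2^{-k}\varepsilon_0)}\varphi$ is a slight (and harmless) streamlining of the usual splitting $\int_{A_k}\varphi\leqslant\int_{B_k}|\varphi-\widetilde\varphi_k|+\widetilde\varphi_k|B_k|$.
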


Choosing $\psi(t)=1/\left(t\, \log\left(1/t\right)\right)$ in
Lemma~\ref{lem1}, we obtain by Lem\-ma~\ref{lem2} the following
result with the FMO type criterion.

\begin{theorem}\label{th1} Let a function $B: D\to{\mathbb B}^{2\times 2}$
be measurable and let $\mu_B$ be given by the formula (\ref{Mu}).
Suppose that $K_{\mu_B}\in L_{\rm loc}^1(D)$ and
$K^T_{\mu_B}(z,z_0)\leqslant Q_{z_0}(z)$ a.e. in a neighborhood
$U_{z_0}$ of each point $z_0\in D$ for a function $Q_{z_0}:
U_{z_0}\to[1,\infty]$ in class ${\rm FMO}({z_0})$.

Then the generalized Cauchy-Riemann equation (\ref{BM}) has a
regular homeomorphic solution $(u,v)$ such that $u={\rm Re}\,f$ and
$v={\rm Im}\,f$, where $f=f_{\mu_B}$ is a regular homeomorphic
solution of Beltrami equation (\ref{B}) with $\mu=\mu_B$.

Furthermore, if $K_{\mu_B}\in L^1(D)$ and the hypotheses hold for
all $z_0\in \overline D$, then $(u,v)$ can be extended by continuity
to the whole plane as a regular homeomorphic solution of (\ref{BM})
with $B$ extended by the Hodge matrix $\mathbb H$ outside~$D$.
\end{theorem}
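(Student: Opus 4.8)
The plan is to apply Lemma~\ref{lem1} with the single universal choice $\psi_{z_0,\varepsilon}(t)\equiv\psi(t):=1/(t\,\log(1/t))$, independent of both $z_0$ and $\varepsilon$, so that the entire task reduces to verifying the integral hypothesis (\ref{eqT}) for this $\psi$ by means of Lemma~\ref{lem2}. For each $z_0\in D$ I would first shrink the neighborhood, choosing $\varepsilon_0=\varepsilon_0(z_0)>0$ small enough that $B(z_0,\varepsilon_0)\subset U_{z_0}$ and $\varepsilon_0<\delta_0=\min(\exp(-e),d_0)$; the latter guarantees $\log(1/t)>e$, hence $\psi(t)>0$ and $\log\log(1/t)>0$, on $(0,\varepsilon_0)$, so that all quantities below are well defined and positive.

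First I would evaluate $I_{z_0}(\varepsilon)$ from (\ref{eqI}). The substitution $s=\log(1/t)$ gives
$$
I_{z_0}(\varepsilon)\ =\ \int_\varepsilon^{\varepsilon_0}\frac{dt}{t\,\log(1/t)}\ =\ \log\log\frac{1}{\varepsilon}\ -\ \log\log\frac{1}{\varepsilon_0}\ ,
$$
which is finite for every $\varepsilon\in(0,\varepsilon_0)$, so (\ref{eqI}) holds, and moreover $I_{z_0}(\varepsilon)\sim\log\log(1/\varepsilon)$ as $\varepsilon\to0$, whence $I^2_{z_0}(\varepsilon)\sim(\log\log(1/\varepsilon))^2$.

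Next I would estimate the left-hand side of (\ref{eqT}). Using the hypothesis $K^T_{\mu_B}(z,z_0)\leqslant Q_{z_0}(z)$ a.e. together with $\psi^2(|z-z_0|)=1/(|z-z_0|\log\frac{1}{|z-z_0|})^2$, I obtain
$$
\int\limits_{\varepsilon<|z-z_0|<\varepsilon_0} K^T_{\mu_B}(z,z_0)\,\psi^2(|z-z_0|)\,d{\cal L}(z)\ \leqslant\ \int\limits_{\varepsilon<|z-z_0|<\varepsilon_0}\frac{Q_{z_0}(z)\,d{\cal L}(z)}{\left(|z-z_0|\log\frac{1}{|z-z_0|}\right)^2}\ .
$$
Since $Q_{z_0}$ is non-negative and of class ${\rm FMO}(z_0)$, Lemma~\ref{lem2} with $n=2$ bounds the right-hand side by $O(\log\log(1/\varepsilon))$ as $\varepsilon\to0$. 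Dividing by $I^2_{z_0}(\varepsilon)\sim(\log\log(1/\varepsilon))^2$ yields a quantity of order $O(1/\log\log(1/\varepsilon))\to0$, so the left-hand side of (\ref{eqT}) is indeed $o(I^2_{z_0}(\varepsilon))$. Thus (\ref{eqT}) is satisfied for every $z_0\in D$, and Lemma~\ref{lem1} delivers the desired regular homeomorphic solution $(u,v)=({\rm Re}\,f,{\rm Im}\,f)$ with $f=f_{\mu_B}$.

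For the \emph{furthermore} assertion I would repeat the same verification at each $z_0\in\overline D$, which is legitimate because the FMO bound and $K_{\mu_B}\in L^1(D)$ are now assumed on the closure; invoking the second part of Lemma~\ref{lem1} then produces the continuous extension to the whole plane with $B$ replaced by $\mathbb H$ outside $D$. I do not expect a genuine obstacle here, since Lemmas~\ref{lem1} and~\ref{lem2} carry the analytic weight; the only points requiring care are the coordinated choice of $\varepsilon_0(z_0)$ (so that it simultaneously fits inside $U_{z_0}$, lies below $\delta_0$, and is admissible for Lemma~\ref{lem2}) and the matching of the $O(\log\log)$ bound from Lemma~\ref{lem2} against the $(\log\log)^2$ growth of $I^2_{z_0}$, which is exactly what converts the crude estimate into the required $o(\cdot)$.
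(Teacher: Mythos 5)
Your proposal is correct and is exactly the paper's argument: the paper proves Theorem~\ref{th1} in one line by choosing $\psi(t)=1/\left(t\log(1/t)\right)$ in Lemma~\ref{lem1} and invoking Lemma~\ref{lem2}, and your write-up simply supplies the details of that verification (the computation $I_{z_0}(\varepsilon)\sim\log\log(1/\varepsilon)$, the domination of $K^T_{\mu_B}$ by $Q_{z_0}$, and the comparison $O(\log\log(1/\varepsilon))=o\bigl(I^2_{z_0}(\varepsilon)\bigr)$), all of which check out, including the coordination of $\varepsilon_0(z_0)$ with $U_{z_0}$ and $\delta_0$.
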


By Corollary~\ref{cor1} we obtain the following nice consequence of
Theorem~\ref{th1}, where $B(z_0,\varepsilon)$ denote the disks $\{ z
\in\mathbb C:\, |z-z_0|\,<\, \varepsilon\}$.

\begin{corollary}\label{cor3} Let a function $B: D\to{\mathbb B}^{2\times 2}$
be measurable and let $\mu_B$ be given by the formula (\ref{Mu}).
Suppose that $K_{\mu_B}\in L_{\rm loc}^1(D)$ and that
\begin{equation}\label{eqMEAN}\overline{\lim\limits_{\varepsilon\to0}}\quad
\dashint_{B(z_0,\varepsilon)}K^T_{\mu_B}(z,z_0)\,d{\cal L}(z)\ <\
\infty\ \ \ \ \ \ \ \ \forall\ z_0\in D\ .\end{equation}

Then the generalized Cauchy-Riemann equation (\ref{BM}) has a
regular homeomorphic solution $(u,v)$ such that $u={\rm Re}\,f$ and
$v={\rm Im}\,f$, where $f=f_{\mu_B}$ is a regular homeomorphic
solution of Beltrami equation (\ref{B}) with $\mu=\mu_B$.

Furthermore, if $K_{\mu_B}\in L^1(D)$ and (\ref{eqT}) holds for all
$z_0\in \overline D$, then $(u,v)$ can be extended by continuity to
the whole plane as a regular homeo\-morphic solution of (\ref{BM})
with $B$ extended by the matrix $\mathbb H$ outside~$D$.
\end{corollary}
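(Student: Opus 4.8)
The plan is to obtain this corollary as a direct specialization of Theorem~\ref{th1}, the point being that the averaged bound (\ref{eqMEAN}) is precisely what is needed to place the tangent dilatation into the class ${\rm FMO}(z_0)$ at each point. First I would fix an arbitrary $z_0\in D$ and consider the function $\varphi(z):=K^T_{\mu_B}(z,z_0)$, which is non-negative because $1-|\mu_B|^2>0$ by the ellipticity condition built into $\mathbb B^{2\times 2}$, and which is locally integrable on $D$ since $K^T_{\mu_B}(\cdot,z_0)\leqslant K_{\mu_B}\in L^1_{\rm loc}(D)$ by (\ref{eqConnect}). Hypothesis (\ref{eqMEAN}) then reads exactly as
\[
\overline{\lim\limits_{\varepsilon\to0}}\ \dashint_{B(z_0,\varepsilon)}|\varphi(z)|\,d{\cal L}(z)\ <\ \infty\ ,
\]
so Corollary~\ref{cor1} applies and gives $\varphi=K^T_{\mu_B}(\cdot,z_0)\in{\rm FMO}(z_0)$.

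Next I would produce a majorant of the form required by Theorem~\ref{th1}, whose hypothesis asks for a function $Q_{z_0}:U_{z_0}\to[1,\infty]$ of class ${\rm FMO}(z_0)$ with $K^T_{\mu_B}(\cdot,z_0)\leqslant Q_{z_0}$. Since $K^T_{\mu_B}$ need not be bounded below by $1$, I would simply set $Q_{z_0}(z):=K^T_{\mu_B}(z,z_0)+1$ on a ball $U_{z_0}\subset D$ centered at $z_0$. Then $Q_{z_0}\geqslant1$, the domination $K^T_{\mu_B}(z,z_0)\leqslant Q_{z_0}(z)$ is trivial, and $Q_{z_0}\in{\rm FMO}(z_0)$: adding the constant $1$ shifts the mean $\widetilde\varphi_\varepsilon(z_0)$ in (\ref{FMO_eq2.5}) by the same constant, leaving the oscillation integrand $|\varphi-\widetilde\varphi_\varepsilon(z_0)|$ in (\ref{FMO_eq2.4}) unchanged.

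With these ingredients the hypotheses of Theorem~\ref{th1} are verified at every $z_0\in D$: the relation $K_{\mu_B}\in L^1_{\rm loc}(D)$ holds by assumption and $K^T_{\mu_B}(\cdot,z_0)$ is dominated a.e.\ by the ${\rm FMO}(z_0)$ function $Q_{z_0}$. Invoking Theorem~\ref{th1} then yields the regular homeomorphic solution $(u,v)=({\rm Re}\,f,{\rm Im}\,f)$ with $f=f_{\mu_B}$ the regular homeomorphic solution of (\ref{B}). For the concluding assertion I would run the identical reduction at each boundary point, so that under $K_{\mu_B}\in L^1(D)$ together with (\ref{eqMEAN}) for all $z_0\in\overline D$ the extension part of Theorem~\ref{th1} applies verbatim and furnishes the continuation to the whole plane with $B$ prolonged by the Hodge matrix $\mathbb H$ outside $D$. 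I do not anticipate a genuine obstacle, since the statement is a corollary of Theorem~\ref{th1}; the only step needing a moment of care is the additive normalization guaranteeing that $Q_{z_0}$ takes values in $[1,\infty]$ while staying in ${\rm FMO}(z_0)$.
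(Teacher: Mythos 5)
Your proposal is correct and follows essentially the same route as the paper, which obtains this corollary from Theorem~\ref{th1} via Corollary~\ref{cor1} (condition (\ref{eqMEAN}) places $K^T_{\mu_B}(\cdot,z_0)$ in ${\rm FMO}(z_0)$), and your additive normalization $Q_{z_0}:=K^T_{\mu_B}(\cdot,z_0)+1$ is a clean way to meet the $[1,\infty]$-valued requirement of Theorem~\ref{th1} that the paper leaves implicit (recall $K^T_{\mu_B}$ can drop below $1$ by (\ref{eqConnect})). One small caveat: the printed ``furthermore'' part assumes (\ref{eqT}) -- not (\ref{eqMEAN}) -- at all $z_0\in\overline D$, in which case the extension follows directly from the second part of Lemma~\ref{lem1}; your reading with (\ref{eqMEAN}) at boundary points also yields the conclusion by the same FMO reduction, so the discrepancy is harmless.
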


\medskip

We also obtain the following consequences of Theorem~\ref{th1} with
the BMO type criteria.

\begin{corollary}\label{cor4} Let a function $B: D\to{\mathbb
B}^{2\times 2}$ be measurable and let $\mu_B$ be given by the
formula (\ref{Mu}). If $K_{\mu_B}$ has a dominant $Q:
D\to[1,\infty]$ in the class ${\rm BMO}(D)$, then the first part of
the conclusions of Theorem~\ref{th1} hold. Furthermore, if
$K_{\mu_B}$ has a dominant $Q$ in the class ${\rm BMO}(\overline
D)$, then the second part of the conclusions of Theorem~\ref{th1}
hold.
\end{corollary}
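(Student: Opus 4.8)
The plan is to obtain this corollary as an immediate specialization of Theorem~\ref{th1}, feeding the BMO dominant itself into the FMO hypothesis. The whole reduction rests on two facts already recorded above: the pointwise estimate $K^T_{\mu_B}(z,z_0)\leqslant K_{\mu_B}(z)$ from (\ref{eqConnect}), valid for every $z_0$, and the chain of inclusions ${\rm BMO}(D)\subset{\rm BMO}_{\rm loc}(D)\subset{\rm FMO}(D)$ noted in Remark~\ref{rem3}. Taken together, these say that a BMO dominant $Q$ of $K_{\mu_B}$ is automatically a pointwise FMO dominant of $K^T_{\mu_B}(\cdot,z_0)$ at every $z_0$, which is precisely the structural hypothesis of Theorem~\ref{th1}.

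For the first part I would argue as follows. Suppose $K_{\mu_B}\leqslant Q$ a.e.\ in $D$ with $Q\in{\rm BMO}(D)$. First I check the integrability requirement $K_{\mu_B}\in L_{\rm loc}^1(D)$ of Theorem~\ref{th1}: since ${\rm BMO}_{\rm loc}\subset L_{\rm loc}^p$ for every finite $p$ (again Remark~\ref{rem3}), we have $Q\in L_{\rm loc}^1(D)$, and then $0\leqslant K_{\mu_B}\leqslant Q$ forces $K_{\mu_B}\in L_{\rm loc}^1(D)$. Next, for each $z_0\in D$ I take $Q_{z_0}:=Q$ restricted to a small disc $U_{z_0}\subset D$; since $Q\in{\rm FMO}(z_0)$ this is an admissible majorant, while (\ref{eqConnect}) gives $K^T_{\mu_B}(z,z_0)\leqslant K_{\mu_B}(z)\leqslant Q(z)$ a.e.\ in $U_{z_0}$. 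Thus every hypothesis of Theorem~\ref{th1} is met, and its first conclusion produces the regular homeomorphic solution $(u,v)=({\rm Re}\,f,{\rm Im}\,f)$.

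For the second part I assume instead $Q\in{\rm BMO}(\overline D)$, i.e.\ $Q$ admits an extension $Q_*\in{\rm BMO}(D_*)$ on some domain $D_*\supset\overline D$. Then $Q_*\in{\rm FMO}(z_0)$ for every $z_0\in\overline D$, so the majorization $K^T_{\mu_B}(\cdot,z_0)\leqslant Q_*$ now supplies an admissible FMO dominant at the boundary points as well. The only extra item is the global integrability $K_{\mu_B}\in L^1(D)$ demanded by the second part of Theorem~\ref{th1}; this I would obtain by covering the compact set $\overline D$ by finitely many discs on which $Q_*$ is integrable, whence $Q\in L^1(D)$ and $K_{\mu_B}\in L^1(D)$. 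Feeding this into the second conclusion of Theorem~\ref{th1} extends $(u,v)$ by continuity to the whole plane, with $B$ replaced by the Hodge matrix $\mathbb H$ outside $D$.

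Since the argument is essentially a bookkeeping reduction, I do not anticipate a genuine obstacle; the only points needing care are (i) confirming that $Q\in{\rm BMO}(\overline D)$ really certifies the FMO hypothesis at the boundary points $z_0\in\partial D$, which is exactly where the extension $Q_*$ into $D_*$ is used, and (ii) upgrading local to global integrability of $K_{\mu_B}$ for the second part, which relies on boundedness of $D$ (hence compactness of $\overline D$) implicit in the whole-plane extension statement and in the hydrodynamic normalization of Remark~\ref{rem1}.
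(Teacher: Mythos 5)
Your proof is correct and is essentially the paper's own (implicit) argument: the corollary is stated there as an immediate consequence of Theorem~\ref{th1}, obtained exactly as you do, via the pointwise bound $K^T_{\mu_B}(z,z_0)\leqslant K_{\mu_B}(z)\leqslant Q(z)$ from (\ref{eqConnect}) together with the inclusions ${\rm BMO}(D)\subset{\rm FMO}(D)$ and ${\rm BMO}_{\rm loc}\subset L^1_{\rm loc}$ recorded in Remark~\ref{rem3}. The only cosmetic refinement worth noting is that in the second part one should replace the extension $Q_*$ by $\max(Q_*,1)$ (still BMO, still FMO at every point) so that it also dominates $K^T_{\mu_B}(\cdot,z_0)$, which is extended by $1$ outside $D$ per Remark~\ref{rem1} — a detail the paper glosses over as well.
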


\begin{remark}\label{rem4} In particular, the first part of conclusions of Theorem~\ref{th1} hold if
$Q\in{\rm W}^{1,2}_{\rm loc}(D)$ because of $W^{\,1,2}_{\rm loc}(D)
\subset {\rm VMO}_{\rm loc}(D)$, see Remark~\ref{rem2}. The second
part holds if $Q\in{\rm W}^{1,2}_{\rm loc}(\overline D)$.
\end{remark}

\medskip

Choosing in Lemma~\ref{lem1} the functional parameter $\psi(t)=1/t$,
we come to the next statement with the Calderon-Zygmund type
criterion.

\begin{theorem}\label{th2} Let a function $B: D\to{\mathbb B}^{2\times
2}$ be measurable and let $\mu_B$ be given by the formula
(\ref{Mu}). Suppose that $K_{\mu_B}\in L_{\rm loc}^1(D)$ and that,
for each $z_0\in D$ and some $\varepsilon_0=\varepsilon(z_0)>0$,
\begin{equation}\label{eqLOG}
\int\limits_{\varepsilon<|z-z_0|<\varepsilon_0}\frac{K^T_{\mu_B}(z,z_0)}{|z-z_0|^2}\
{d{\cal L}(z)}\ =\
o\left(\left[\log\frac{1}{\varepsilon}\right]^2\right)\ \ \ \hbox{
as $\varepsilon\to 0$}\ .\end{equation}

Then the generalized Cauchy-Riemann equation (\ref{BM}) has a
regular homeomorphic solution $(u,v)$ such that $u={\rm Re}\,f$ and
$v={\rm Im}\,f$, where $f=f_{\mu_B}$ is a regular homeomorphic
solution of Beltrami equation (\ref{B}) with $\mu=\mu_B$.

Furthermore, if $K_{\mu_B}\in L^1(D)$ and (\ref{eqT}) holds for all
$z_0\in \overline D$, then $(u,v)$ can be extended by continuity to
the whole plane as a regular homeomorphic solution of (\ref{BM})
with $B$ extended by the matrix $\mathbb H$ outside~$D$.
\end{theorem}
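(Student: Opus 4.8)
The plan is to specialize Lemma~\ref{lem1} to the simplest admissible functional parameter, namely $\psi_{z_0,\varepsilon}(t)\equiv\psi(t):=1/t$, taken independent of both $z_0$ and $\varepsilon$ (the Calderon-Zygmund weight). With this choice the quantity $I_{z_0}(\varepsilon)$ in (\ref{eqI}) becomes elementary, and the entire task reduces to matching both the integrand and the order term in the defining condition (\ref{eqT}) with those appearing in the hypothesis (\ref{eqLOG}).

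First I would verify admissibility, i.e. the finiteness requirement (\ref{eqI}). A direct computation gives
\[
I_{z_0}(\varepsilon)\ =\ \int\limits_{\varepsilon}^{\varepsilon_0}\frac{dt}{t}
\ =\ \log\frac{\varepsilon_0}{\varepsilon}\ <\ \infty
\qquad\forall\,\varepsilon\in(0,\varepsilon_0)\ ,
\]
so $\psi(t)=1/t$ is indeed an admissible parameter in Lemma~\ref{lem1}. Since $\psi^2(|z-z_0|)=1/|z-z_0|^2$, the left-hand side of (\ref{eqT}) coincides \emph{verbatim} with the integral displayed in (\ref{eqLOG}); no estimation of the integrand is required.

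It remains only to compare the two order terms on the right. Writing $I_{z_0}(\varepsilon)=\log(1/\varepsilon)+\log\varepsilon_0$ and noting that $\log(1/\varepsilon)\to\infty$ as $\varepsilon\to0$ while $\log\varepsilon_0$ is a fixed constant, we obtain
\[
I^2_{z_0}(\varepsilon)\ =\ \left[\log\frac{1}{\varepsilon}\right]^2(1+o(1))
\qquad\text{as}\ \ \varepsilon\to0\ .
\]
Consequently the smallness conditions $o(I^2_{z_0}(\varepsilon))$ and $o\bigl([\log(1/\varepsilon)]^2\bigr)$ describe the same requirement, so hypothesis (\ref{eqLOG}) is precisely condition (\ref{eqT}) for this particular $\psi$. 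Lemma~\ref{lem1} then applies directly and yields both halves of the conclusion: the regular homeomorphic solution $(u,v)$ with $u=\R f$ and $v=\I f$, $f=f_{\mu_B}$, and, under the additional global hypotheses $K_{\mu_B}\in L^1(D)$ together with (\ref{eqT}) holding on all of $\overline D$, the continuous extension to the whole plane with $B$ replaced by the Hodge matrix $\mathbb H$ outside $D$. There is no genuine obstacle here, the statement being a direct specialization of Lemma~\ref{lem1}; the only point deserving (trivial) care is the asymptotic equivalence $I^2_{z_0}(\varepsilon)\sim[\log(1/\varepsilon)]^2$, which guarantees that the additive constant $\log\varepsilon_0$ arising from the lower limit of integration does not affect the order estimate.
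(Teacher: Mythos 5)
Your proposal is correct and coincides with the paper's own proof, which consists precisely of the remark that Theorem~\ref{th2} follows from Lemma~\ref{lem1} by choosing the functional parameter $\psi(t)=1/t$. Your verification that $I_{z_0}(\varepsilon)=\log(\varepsilon_0/\varepsilon)$ is finite and that $I^2_{z_0}(\varepsilon)\sim\left[\log(1/\varepsilon)\right]^2$ as $\varepsilon\to 0$ simply makes explicit the details the paper leaves to the reader.
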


\begin{remark}\label{rem5}  Choosing in Lemma~\ref{lem1} the functional parameter
$\psi(t)=1/(t\log{1/t})$ instead of $\psi(t)=1/t$, we are able to
replace (\ref{eqLOG}) by the conditions
\begin{equation}\label{eqLOGLOG}
\int\limits_{\varepsilon<|z-z_0|<\varepsilon_0}\frac{K^T_{\mu_B}(z,z_0)\
d{\cal L}(z)} {\left(|z-z_0|\log{\frac{1}{|z-z_0|}}\right)^2}\ =\
o\left(\left[\log\log\frac{1}{\varepsilon}\right]^2\right)\ \ \ \ \
 \forall\ z_0\in D\end{equation} as $\varepsilon\to 0$ for some $\varepsilon_0=\varepsilon(z_0)>0$.
More generally, we are able to give here the whole scale of the
corresponding conditions with ite\-rated logarithms, i.e., using
functions
$\psi(t)=1/(t\log{1}/{t}\cdot\log\log{1}/{t}\cdot\ldots\cdot\log\ldots\log{1}/{t})$.
\end{remark}

Similarly, choosing in Lemma~\ref{lem1} the corresponding functional
parameter $\psi(t)$, we obtain the following Lehto type criterion,
cf. his paper \cite{Le} that was devoted to the Beltrami equations
on the plane. In this part, $k^T_{\mu_B}(z_0,r)$ is $L^1-$norm of
the tangent dilatation quotient  $K^T_{\mu_B}(z,z_0)$ of the
Beltrami equation (\ref{B}) with $\mu=\mu_B$ over the circle
$C(z_0,r):=\{ z\in\mathbb C:\ |z-z_0|=r\}$.

\begin{theorem}\label{th3} Let $B: D\to{\mathbb
B}^{2\times 2}$ be measurable and let $\mu_B$ be given by the
formula (\ref{Mu}). Suppose that $K_{\mu_B}\in L_{\rm loc}^1(D)$ and
\begin{equation}\label{eqLEHTO}\int\limits_{0}^{\varepsilon(z_0)}
\frac{dr}{k^T_{\mu_B}(z_0,r)}\ =\ \infty\ \ \ \ \ \ \ \ \ \ \forall\
z_0\in D\ .\end{equation}

Then the generalized Cauchy-Riemann equation (\ref{BM}) has a
regular homeomorphic solution $(u,v)$ such that $u={\rm Re}\,f$ and
$v={\rm Im}\,f$, where $f=f_{\mu_B}$ is a regular homeomorphic
solution of Beltrami equation (\ref{B}) with $\mu=\mu_B$.

Furthermore, if $K_{\mu_B}\in L^1(D)$ and (\ref{eqT}) holds for all
$z_0\in \overline D$, then $(u,v)$ can be extended by continuity to
the whole plane as a regular homeomorphic solution of (\ref{BM})
with $B$ extended by the matrix $\mathbb H$ outside~$D$.
\end{theorem}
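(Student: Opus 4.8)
The plan is to deduce this Lehto-type criterion from Lemma~\ref{lem1} by making the Lehto choice of the functional parameter,
\[
\psi_{z_0,\varepsilon}(t)\ \equiv\ \psi_{z_0}(t)\ :=\ \frac{1}{k^T_{\mu_B}(z_0,t)}\ ,
\]
independent of $\varepsilon$, and then showing that the divergence condition (\ref{eqLEHTO}) is exactly what makes the abstract integral condition (\ref{eqT}) hold for this $\psi_{z_0}$. So the whole argument amounts to checking the two hypotheses (\ref{eqI}) and (\ref{eqT}) of Lemma~\ref{lem1} for this particular parameter, after which the conclusions are inherited verbatim from that lemma. Throughout I would keep in mind that, by (\ref{eqTangent}), $K^T_{\mu_B}(z,z_0)>0$ and, by (\ref{eqConnect}), $K^T_{\mu_B}(z,z_0)\le K_{\mu_B}(z)$, so that Fubini together with $K_{\mu_B}\in L^1_{\rm loc}(D)$ guarantees that $k^T_{\mu_B}(z_0,r)$ is finite and strictly positive for a.e.\ $r$, making $\psi_{z_0}$ a legitimate positive measurable weight.

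The first step is to verify the finiteness (\ref{eqI}), i.e.\ that $I_{z_0}(\varepsilon)=\int_\varepsilon^{\varepsilon_0}dt/k^T_{\mu_B}(z_0,t)<\infty$ for each $\varepsilon\in(0,\varepsilon_0)$. Here I would use a Cauchy--Schwarz lower bound for the circular mean: since $1/K^T_{\mu_B}\le K_{\mu_B}$ by (\ref{eqConnect}), one has $(2\pi)^2\le\big(\int_0^{2\pi}K^T_{\mu_B}\,d\vartheta\big)\big(\int_0^{2\pi}K_{\mu_B}\,d\vartheta\big)$, whence, recognizing the arc-length norm
\[
k^T_{\mu_B}(z_0,r)\ =\ r\int_0^{2\pi}K^T_{\mu_B}(z_0+re^{i\vartheta},z_0)\,d\vartheta\ ,
\]
we get $1/k^T_{\mu_B}(z_0,r)\le (2\pi)^{-2}r^{-1}\int_0^{2\pi}K_{\mu_B}(z_0+re^{i\vartheta})\,d\vartheta$. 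Integrating in $r$ over $(\varepsilon,\varepsilon_0)$ and returning to the area integral bounds $I_{z_0}(\varepsilon)$ by a constant multiple of $\varepsilon^{-2}\int_{\varepsilon<|z-z_0|<\varepsilon_0}K_{\mu_B}(z)\,d{\cal L}(z)$, which is finite by the local integrability of $K_{\mu_B}$. This is the step I expect to be the only genuinely delicate one, and it is precisely where the hypothesis $K_{\mu_B}\in L^1_{\rm loc}(D)$ is indispensable: without it the reciprocal $1/k^T_{\mu_B}$ need not be integrable near the deleted centre and the Lehto parameter would be unusable.

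The second step is to verify (\ref{eqT}). Passing to polar coordinates about $z_0$ and using the same identification of $k^T_{\mu_B}(z_0,r)$ as the arc-length $L^1$-norm over $C(z_0,r)$, the area integral collapses:
\[
\int_{\varepsilon<|z-z_0|<\varepsilon_0}K^T_{\mu_B}(z,z_0)\,\psi_{z_0}^2(|z-z_0|)\,d{\cal L}(z)\ =\ \int_\varepsilon^{\varepsilon_0}\frac{dr}{k^T_{\mu_B}(z_0,r)}\ =\ I_{z_0}(\varepsilon)\ .
\]
Thus the left-hand side of (\ref{eqT}) equals $I_{z_0}(\varepsilon)$ itself, and the required estimate $I_{z_0}(\varepsilon)=o(I^2_{z_0}(\varepsilon))$ is equivalent to $I_{z_0}(\varepsilon)\to\infty$ as $\varepsilon\to0$, which is nothing but the divergence (\ref{eqLEHTO}). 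Hence (\ref{eqT}) holds, Lemma~\ref{lem1} applies with $\mu=\mu_B$, and the first conclusion follows, with $u={\rm Re}\,f$, $v={\rm Im}\,f$. For the second conclusion I would note that the very same polar reduction shows that (\ref{eqLEHTO}) holding for all $z_0\in\overline D$, together with $K_{\mu_B}\in L^1(D)$, yields (\ref{eqT}) for all such $z_0$, so the second part of Lemma~\ref{lem1} furnishes the continuous extension to the whole plane with $B$ replaced by the Hodge matrix $\mathbb H$ outside~$D$.
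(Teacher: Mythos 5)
Your proposal is correct and follows essentially the same route as the paper: the paper's own proof likewise sets $\psi_0(t)=1/k^T_{\mu_B}(z_0,t)$, uses the Fubini theorem to identify the left-hand side of (\ref{eqT}) with $I_0(\varepsilon)=\int_\varepsilon^{\varepsilon_0}dr/k^T_{\mu_B}(z_0,r)$, observes that (\ref{eqLEHTO}) forces $I_0(\varepsilon)\to\infty$ so that $I_0(\varepsilon)=o\left(I_0^2(\varepsilon)\right)$, and then invokes Lemma~\ref{lem1}. Your Cauchy--Schwarz verification of the finiteness condition (\ref{eqI}) merely makes explicit what the paper subsumes under ``elementary calculations,'' so the two arguments coincide in substance.
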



Indeed, setting ${\psi}_0(t) : = 1/k^T_{\mu_B}(z_0,t)$, by the
Fubini theorem, see e.g. Theo\-rem III(9.3) in \cite{Saks}, by
elementary calculations and by (\ref{eqLEHTO}), we derive, for
$\varepsilon_0=\varepsilon(z_0)$, that
\begin{equation}\label{eq1000}
\int\limits_{\varepsilon<|z-z_0|<\varepsilon_0}K^T_{\mu_B}(z,z_0)\,\psi_0^2(|z-z_0|)\
d{\cal L}(z) = I_0(\varepsilon)
 = o\left(I_0^2(\varepsilon)\right)\ \ \ \hbox{as $\varepsilon\to 0$,}
\end{equation} where
$I_0(\varepsilon)=
\int\limits_{\varepsilon}^{\varepsilon_0}\psi_0(t)\,dt$,
$\varepsilon\in(0,\varepsilon_0)$. Thus, Theorem~\ref{th3} follows
from Lemma~\ref{lem1}.

\begin{corollary}\label{cor5} Let a function $B: D\to{\mathbb B}^{2\times 2}$ be
measurable and let $\mu_B$ be given by the formula (\ref{Mu}).
Suppose that $K_{\mu_B}\in L_{\rm loc}^1(D)$ and
\begin{equation}\label{eqLOGk}k^T_{\mu_B}(z_0,\varepsilon)\ =\ O\left(\log\frac{1}{\varepsilon}\right)
\qquad\mbox{as}\ \varepsilon\to0\ \ \ \ \ \ \ \  \forall\ z_0\in D\
.\end{equation}

Then the generalized Cauchy-Riemann equation (\ref{BM}) has a
regular homeomorphic solution $(u,v)$ such that $u={\rm Re}\,f$ and
$v={\rm Im}\,f$, where $f=f_{\mu_B}$ is a regular homeomorphic
solution of Beltrami equation (\ref{B}) with $\mu=\mu_B$.

Furthermore, if $K_{\mu_B}\in L^1(D)$ and (\ref{eqT}) holds for all
$z_0\in \overline D$, then $(u,v)$ can be extended by continuity to
the whole plane as a regular homeomorphic solution of (\ref{BM})
with $B$ extended by the matrix $\mathbb H$ outside~$D$.
 \end{corollary}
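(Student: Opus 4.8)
The plan is to deduce everything from Theorem~\ref{th3}: it suffices to verify that the growth restriction (\ref{eqLOGk}) forces the Lehto-type divergence (\ref{eqLEHTO}) at every point $z_0\in D$, and then the existence of the regular homeomorphic solution $(u,v)=({\rm Re}\,f,{\rm Im}\,f)$, together with the continuation by the Hodge matrix $\mathbb H$ outside $D$ under the global hypotheses, is quoted verbatim from that theorem.

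Fix $z_0\in D$. By (\ref{eqLOGk}) there are a constant $C=C(z_0)>0$ and a radius $\varepsilon_0=\varepsilon(z_0)>0$ with $k^T_{\mu_B}(z_0,r)=O(\log(1/r))$ as $r\to0$. Since the circular $L^1$-norm $k^T_{\mu_B}(z_0,r)$ already incorporates the arc-length element — this is precisely the factor that produces the Fubini identity behind (\ref{eq1000}) — the quantity governing (\ref{eqLEHTO}) is $1/k^T_{\mu_B}(z_0,r)$, and the growth bound translates into a lower estimate of the form $1/k^T_{\mu_B}(z_0,r)\ge c\,\bigl(r\log(1/r)\bigr)^{-1}$ for small $r$ and some $c=c(z_0)>0$. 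The one genuine computation is then the classical divergence
\[
\int_{0}^{\varepsilon_0}\frac{dr}{r\,\log(1/r)}\ =\ \int_{\log(1/\varepsilon_0)}^{\infty}\frac{dt}{t}\ =\ +\infty\,,
\]
obtained by the substitution $t=\log(1/r)$, $dt=-dr/r$.

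Combining the lower estimate with this divergence yields $\int_{0}^{\varepsilon_0}dr/k^T_{\mu_B}(z_0,r)=\infty$, i.e. hypothesis (\ref{eqLEHTO}) holds at the arbitrary point $z_0$; hence Theorem~\ref{th3} applies and gives both assertions of the corollary. I expect no real obstacle here — the argument is essentially a one-line comparison — so the only step needing care is the bookkeeping of the normalization of $k^T_{\mu_B}$: one must keep track of the arc-length factor $\sim r$ hidden in the circular norm, since it is exactly this factor that upgrades the merely convergent $\int_0 dr/\log(1/r)$ to the divergent $\int_0 dr/(r\log(1/r))$ that is needed to trigger (\ref{eqLEHTO}).
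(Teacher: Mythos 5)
Your overall route is the paper's route: Corollary \ref{cor5} is stated immediately after the proof of Theorem \ref{th3} with no separate argument, the intended derivation being exactly what you propose --- check that (\ref{eqLOGk}) forces the Lehto condition (\ref{eqLEHTO}) at every $z_0\in D$ and then quote Theorem \ref{th3} for both the local and the global conclusions; the classical divergence $\int_0^{\varepsilon_0} dr/\bigl(r\log(1/r)\bigr)=\infty$ is indeed the computation on which everything rests.

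The trouble sits precisely at the step you yourself flag as needing care. The inequality $1/k^T_{\mu_B}(z_0,r)\ \ge\ c\,\bigl(r\log(1/r)\bigr)^{-1}$ is \emph{not} a consequence of (\ref{eqLOGk}) if $k^T_{\mu_B}(z_0,r)$ denotes one and the same quantity in (\ref{eqLOGk}) and in (\ref{eqLEHTO}) --- which is what the shared symbol asserts, and Section 3 defines $k^T_{\mu_B}$ once, as the $L^1$-norm of $K^T_{\mu_B}(\cdot,z_0)$ over the circle $C(z_0,r)$; that is also the only reading under which the Fubini identity (\ref{eq1000}) in the proof of Theorem \ref{th3} is correct. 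Under that reading, (\ref{eqLOGk}) yields only $1/k^T_{\mu_B}(z_0,r)\ge 1/(C\log(1/r))$, and $\int_0^{\varepsilon_0} dr/\log(1/r)<\infty$ (the integrand is bounded near $0$), so (\ref{eqLEHTO}) does not follow; for instance $k^T_{\mu_B}(z_0,r)\asymp\log(1/r)$ satisfies (\ref{eqLOGk}) while the Lehto integral converges, so the implication --- and with it the derivation of the corollary from Theorem \ref{th3} --- breaks down. Your "translation" becomes valid exactly when (\ref{eqLOGk}) is read as a bound on the circular \emph{integral mean} of $K^T_{\mu_B}(\cdot,z_0)$, so that the $L^1$-norm appearing in (\ref{eqLEHTO}) acquires the extra arc-length factor $2\pi r$: mean $O(\log(1/r))$ gives norm $O\bigl(r\log(1/r)\bigr)$, whence divergence. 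That mixed normalization is clearly what the authors intend --- it is how the analogous pair is set up in Section 4, where $k^T_{\mu_B}$ is explicitly the integral mean and the Lehto integral (\ref{eqLEHTOmA}) correspondingly carries the extra factor $r$, cf.\ Corollary \ref{cor10} --- so in substance you identified the genuinely delicate point and resolved it in the only way that makes the statement true. But as written the step is a non sequitur, since the same symbol stands on both sides of it: a sound proof must use distinct notation for the circular mean and the circular $L^1$-norm, or equivalently restate the hypothesis as $k^T_{\mu_B}(z_0,\varepsilon)=O\bigl(\varepsilon\log(1/\varepsilon)\bigr)$ for the $L^1$-norm before feeding it into Theorem \ref{th3}.
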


\begin{remark}\label{rem6} In particular, the conclusions of Theorem~\ref{th3} hold if
\begin{equation}\label{eqLOGK} K^T_{\mu_B}(z,z_0)\ =\ O\left(\log\frac{1}{|z-z_0|}\right)\qquad{\rm
as}\quad z\to z_0\ \ \ \ \ \ \ \ \forall\ z_0\in D\ .\end{equation}
Moreover, the condition (\ref{eqLOGk}) can be replaced by the series
of weaker conditions
\begin{equation}\label{edLOGLOGk}
k^T_{\mu_B}(z_0,\varepsilon)=O\left(\left[\log\frac{1}{\varepsilon}\cdot\log\log\frac{1}
{\varepsilon}\cdot\ldots\cdot\log\ldots\log\frac{1}{\varepsilon}
\right]\right)\ .
\end{equation}
\end{remark}

To get other type criteria, we need more a couple of auxiliary
statements.

\medskip

Further we use the following notions of the inverse function for
arbitrary monotone functions. Namely, for every non-decreasing
function $\Phi:[0,\infty]\to[0,\infty]$ the inverse function
$\Phi^{-1}:[0,\infty]\to[0,\infty]$ can be well-defined by setting
\begin{equation}\label{eqINVERSE}
\Phi^{-1}(\tau)\ :=\ \inf\limits_{\Phi(t)\geqslant\tau} t
\end{equation}
Here $\inf$ is equal to $\infty$ if the set of $t\in[0,\infty]$ such
that $\Phi(t)\geqslant\tau$ is empty. Note that the function
$\Phi^{-1}$ is non-decreasing, too. It is also evident immediately
by the definition that $\Phi^{-1}(\Phi(t)) \leqslant t$ for all
$t\in[0,\infty]$ with the equality except intervals of constancy of
the function $\Phi(t)$.

\medskip

First of all, recall equivalence of integral conditions, see
Theorem~2.1 in \cite{RSY3}.

\begin{proposition}\label{pr2} Let $\Phi:[0,\infty]\to[0,\infty]$ be a
non-decreasing function and set
\begin{equation}\label{eqLOGFi}
H(t)\ =\ \log\Phi(t)\ .
\end{equation}
Then the equality
\begin{equation}\label{eq333Frer}\int\limits_{\Delta}^{\infty}H'(t)\,\frac{dt}{t}=\infty,
\end{equation}
implies the equality
\begin{equation}\label{eq333F}\int\limits_{\Delta}^{\infty}
\frac{dH(t)}{t}=\infty\,,\end{equation} and (\ref{eq333F}) is
equivalent to
\begin{equation}\label{eq333B}
\int\limits_{\Delta}^{\infty}H(t)\,\frac{dt}{t^2}=\infty\,\end{equation}
for some $\Delta>0$, and (\ref{eq333B}) is equivalent to each of the
equalities
\begin{equation}\label{eq333C} \int\limits_{0}^{\delta}H\left(\frac{1}{t}\right)\,{dt}=\infty\end{equation} for
some $\delta>0$, \begin{equation}\label{eq333D}
\int\limits_{\Delta_*}^{\infty}\frac{d\eta}{H^{-1}(\eta)}=\infty\end{equation}
for some $\Delta_*>H(+0)$ and to
\begin{equation}\label{eq333a}
\int\limits_{\delta_*}^{\infty}\frac{d\tau}{\Phi^{-1}(\tau)}\ =\
\infty\end{equation} for some $\delta_*>\Phi(+0)$.

\medskip

Moreover, (\ref{eq333Frer}) is equivalent to (\ref{eq333F}) and to
hence (\ref{eq333Frer})–(\ref{eq333a}) are equivalent to each other
if $\Phi$ is in addition absolutely continuous. In particular, all
the given conditions are equivalent if $\Phi$ is convex and
non-decreasing. \end{proposition}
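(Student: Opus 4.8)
The plan is to read the statement as a chain of elementary reductions, each handled by a single classical device --- Lebesgue decomposition, integration by parts, or a change of variable --- and to isolate the passage to the generalized inverse as the only genuinely delicate point. First I would settle \eqref{eq333Frer}$\,\Rightarrow\,$\eqref{eq333F}. Since $\Phi$, and hence $H=\log\Phi$, is non-decreasing, the Lebesgue--Stieltjes measure $dH$ is non-negative and its Lebesgue decomposition reads $dH=H'(t)\,dt+d\sigma$ with $\sigma\geq0$ singular. Dividing by $t>0$ and integrating over $(\Delta,\infty)$ gives $\int\frac{dH(t)}{t}\geq\int\frac{H'(t)}{t}\,dt$, which is exactly the asserted implication; moreover equality (hence the reverse implication) holds precisely when $\sigma\equiv0$, i.e. when $H$ is absolutely continuous. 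This already disposes of the ``moreover'' clause: if $\Phi$ is absolutely continuous then so is $H$ wherever $\Phi$ is bounded away from zero, which is all the tail integrals see, the singular part drops out, and \eqref{eq333Frer}$\,\Leftrightarrow\,$\eqref{eq333F}; and a finite convex non-decreasing $\Phi$ is locally Lipschitz, hence absolutely continuous, giving the final ``in particular''.

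Next, \eqref{eq333F}$\,\Leftrightarrow\,$\eqref{eq333B} is integration by parts: $\int_\Delta^T\frac{dH(t)}{t}=\frac{H(T)}{T}-\frac{H(\Delta)}{\Delta}+\int_\Delta^T\frac{H(t)}{t^2}\,dt$. The only thing to verify is the boundary term $H(T)/T$; using monotonicity one has $\int_T^{2T}\frac{H(t)}{t^2}\,dt\geq\frac{H(T)}{2T}$, so convergence of \eqref{eq333B} forces $H(T)/T\to0$, and the two integrals therefore converge or diverge together. The step \eqref{eq333B}$\,\Leftrightarrow\,$\eqref{eq333C} is the substitution $t\mapsto1/t$, which maps $\int_\Delta^\infty H(t)\,t^{-2}\,dt$ to $\int_0^{1/\Delta}H(1/s)\,ds$ with no boundary contribution, so I would simply record $\delta=1/\Delta$.

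The heart of the argument is the passage to the inverse functions, \eqref{eq333F}$\,\Leftrightarrow\,$\eqref{eq333D} and \eqref{eq333D}$\,\Leftrightarrow\,$\eqref{eq333a}. Here I would invoke the change-of-variable formula for Lebesgue--Stieltjes integrals along the monotone map $\eta=H(t)$: the push-forward of $dH$ under $H$ is Lebesgue measure on the range of $H$, and the relevant left-inverse is precisely the generalized inverse $H^{-1}$ of \eqref{eqINVERSE}, so that $\int\frac{dH(t)}{t}=\int\frac{d\eta}{H^{-1}(\eta)}$, which is \eqref{eq333F}$\,\Leftrightarrow\,$\eqref{eq333D} with $\Delta_*=H(\Delta)$. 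Finally, from $H=\log\Phi$ and the definition \eqref{eqINVERSE} one checks the identity $H^{-1}(\eta)=\Phi^{-1}(e^{\eta})$, and the substitution $\tau=e^{\eta}$ then reduces \eqref{eq333D} to the $\Phi^{-1}$-form \eqref{eq333a}. I expect this last block to be the main obstacle --- not because any single computation is hard, but because $\Phi$ and $H$ are merely monotone, possibly discontinuous and with intervals of constancy, so the change of variable must be justified through the generalized inverse and the inequality $\Phi^{-1}(\Phi(t))\leq t$ recorded after \eqref{eqINVERSE}, rather than by a naive substitution valid only for strictly increasing smooth functions. Throughout, the lower limits $\Delta,\delta,\Delta_*,\delta_*$ are chosen merely to keep the integrands finite and play no role in the divergence question.
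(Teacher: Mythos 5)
Your strategy is the standard one --- and, for what it is worth, the paper itself gives no proof of Proposition \ref{pr2} at all, quoting it as Theorem 2.1 of \cite{RSY3}, whose proof proceeds along exactly the lines you sketch. The first three blocks of your argument are sound: the Lebesgue decomposition $dH = H'(t)\,dt + d\sigma$ with $\sigma \geq 0$ singular gives (\ref{eq333Frer})$\Rightarrow$(\ref{eq333F}) together with the absolutely continuous and convex cases; Stieltjes integration by parts plus the monotonicity estimate $\int_T^{2T} H(t)\,t^{-2}\,dt \geq H(T)/(2T)$ handles (\ref{eq333F})$\Leftrightarrow$(\ref{eq333B}); and $t \mapsto 1/t$ gives (\ref{eq333B})$\Leftrightarrow$(\ref{eq333C}). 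For (\ref{eq333F})$\Leftrightarrow$(\ref{eq333D}) your formula is correct, although the clean statement is that $dH$ is the push-forward of Lebesgue measure under the generalized inverse $H^{-1}$ (the quantile transform), not that $H$ pushes $dH$ forward to Lebesgue measure on its range --- the latter fails at jumps of $H$; this is a fixable imprecision.

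The genuine gap is the last step. Carrying out your substitution $\tau = e^{\eta}$ honestly, with $H^{-1}(\eta) = \Phi^{-1}(e^{\eta})$ and $d\eta = d\tau/\tau$, gives
\[
\int_{\Delta_*}^{\infty} \frac{d\eta}{H^{-1}(\eta)}\ =\ \int_{\delta_*}^{\infty} \frac{d\tau}{\tau\,\Phi^{-1}(\tau)}\ ,
\qquad \delta_*\ =\ e^{\Delta_*}\ ,
\]
so an extra factor $\tau$ appears in the denominator from the Jacobian, and the right-hand side is \emph{not} condition (\ref{eq333a}). This cannot be repaired by any further argument, because the two conditions are genuinely inequivalent: for $\Phi(t)=t$ (convex and non-decreasing, hence covered even by the ``in particular'' clause) one has $H(t)=\log t$, and then (\ref{eq333Frer})--(\ref{eq333D}) all \emph{converge} (e.g. $\int_{\Delta}^{\infty} t^{-2}\log t\,dt<\infty$ and $\int_{\Delta_*}^{\infty} e^{-\eta}\,d\eta<\infty$), while $\int_{\delta_*}^{\infty} d\tau/\Phi^{-1}(\tau)=\int_{\delta_*}^{\infty} d\tau/\tau=\infty$. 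In other words, (\ref{eq333a}) as printed in this survey is a misprint: in Theorem 2.1 of \cite{RSY3} the last condition reads $\int_{\delta_*}^{\infty} d\tau/\left[\tau\,\Phi^{-1}(\tau)\right]=\infty$, and with that correction your substitution does close the proof. As written, however, your claim that the substitution ``reduces (\ref{eq333D}) to the $\Phi^{-1}$-form (\ref{eq333a})'' is false, and it glosses over precisely the one computation that would have exposed the defect in the statement you were asked to prove.
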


\medskip

Note that the integral in (\ref{eq333F}) is understood as the
Lebesgue--Stieltjes integral and the integrals in (\ref{eq333Frer})
and (\ref{eq333B})--(\ref{eq333D}) as the ordinary Lebesgue
integrals. It is necessary to give one more explanation. From the
right hand sides in the conditions (\ref{eq333Frer})--(\ref{eq333D})
we have in mind $+\infty$. If $\Phi(t)=0$ for $t\in[0,t_*$, then
$H(t)=-\infty$ for $t\in[0,t_*]$ and we complete the definition
$H'(t)=0$ for $t\in[0,t_*]$. Note, the conditions (\ref{eq333F}) and
(\ref{eq333B}) exclude that $t_*$ belongs to the interval of
integrability because in the contrary case the left hand sides in
(\ref{eq333F}) and (\ref{eq333B}) are either equal to $-\infty$ or
indeterminate. Hence we may assume in
(\ref{eq333Frer})--(\ref{eq333C}) that $\delta >t_0$,
correspondingly, $\Delta<1/t_0$ where
$t_0:=\sup\limits_{\Phi(t)=0}t$, and set $t_0=0$ if $\Phi(0)>0$.

\medskip

The most interesting condition (\ref{eq333B}) can be written in the
form:
\begin{equation}\label{eq5!}
\int\limits_{\Delta}^{\infty}\log\, \Phi(t)\ \frac{dt}{t^{2}}\ =\
+\infty\ \ \ \ \ \ \mbox{for some $\Delta > 0$}\ .
\end{equation}

\noindent The next follows by Corollary 3.2 in \cite{RSY3} with
$\lambda =1$ and Proposition~\ref{pr2}.

\begin{proposition}\label{pr3} Let $D$ be a domain in $\mathbb C$ and let
$Q:D\to[1,\infty]$ be a measurable function such that
\begin{equation}\label{eq5.555} \int\limits_D\Phi(Q(z))\,d{\cal L}(z)\ <\ \infty\ ,\end{equation} where
$\Phi:[0,\infty]\to[0,\infty]$ is a non-decreasing convex function
with (\ref{eq5!}). Then
\begin{equation}\label{eq3.333A}
\int\limits_0^{\varepsilon_0}\frac{dr}{q(z_0,r)}\ =\ \infty\ \ \ \ \
\ \ \forall\ z_0\in D\ ,\ \varepsilon_0\in(0,{\rm
dist}\,(z_0,\partial D))\ ,\end{equation} where $q(z_0,r)$ is
$L^1-$norm of the function $Q(z)$ over the circle $\mathbb
S(z_0,r)$.
\end{proposition}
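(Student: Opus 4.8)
The plan is to deduce (\ref{eq3.333A}) from the equivalence of integral conditions recorded in Proposition~\ref{pr2} together with Corollary~3.2 in \cite{RSY3} (taken with $\lambda=1$), and I would first isolate the mechanism that makes such a reduction work. Fix $z_0\in D$ and $\varepsilon_0\in(0,{\rm dist}\,(z_0,\partial D))$, so that the disc $B(z_0,\varepsilon_0)$ is contained in $D$ and hence $\int_{B(z_0,\varepsilon_0)}\Phi(Q)\,d{\cal L}<\infty$ by (\ref{eq5.555}). Passing to polar coordinates and using Fubini's theorem to justify the inner angular integration for a.e. $r$, this reads
\[
\int\limits_{0}^{\varepsilon_0}\left(\int\limits_{0}^{2\pi}\Phi\bigl(Q(z_0+re^{i\theta})\bigr)\,d\theta\right)r\,dr\ <\ \infty\ .
\]

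The second step is to exploit the convexity of $\Phi$ through Jensen's inequality. Let $\bar q(z_0,r)$ denote the angular mean of $Q$ over the circle $\mathbb S(z_0,r)$, so that the arc-length $L^1$-norm is $q(z_0,r)=2\pi r\,\bar q(z_0,r)$. Since $\Phi$ is non-decreasing and convex, Jensen gives $\Phi(\bar q(z_0,r))\leqslant\frac{1}{2\pi}\int_0^{2\pi}\Phi(Q(z_0+re^{i\theta}))\,d\theta$ for a.e. $r$, and combining this with the previous display yields the scalar bound
\[
\int\limits_{0}^{\varepsilon_0}\Phi\bigl(\bar q(z_0,r)\bigr)\,r\,dr\ <\ \infty\ .
\]
In this way the whole problem is reduced to a one-dimensional statement about the single function $r\mapsto\bar q(z_0,r)$, and the desired conclusion (\ref{eq3.333A}) becomes, up to the harmless constant $2\pi$, the divergence $\int_0^{\varepsilon_0}\frac{dr}{r\,\bar q(z_0,r)}=\infty$.

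Finally, I would convert the growth hypothesis on $\Phi$ into the form adapted to this scalar problem. By Proposition~\ref{pr2}, since $\Phi$ is non-decreasing and convex, condition (\ref{eq5!}) (that is, (\ref{eq333B})) is equivalent to (\ref{eq333a}) and to (\ref{eq333D}), i.e. to the divergence of an integral involving $1/\Phi^{-1}$. This is exactly the form that feeds into Corollary~3.2 of \cite{RSY3} with $\lambda=1$, whose conclusion is the Lehto-type divergence of $\int_0^{\varepsilon_0}\frac{dr}{r\,\bar q(z_0,r)}$; rewriting this through $q(z_0,r)=2\pi r\,\bar q(z_0,r)$ gives (\ref{eq3.333A}) for the arbitrary $z_0$ and $\varepsilon_0$ fixed above.

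The main obstacle is precisely this last passage, and it is worth flagging why it is not routine. The area integral controlling $\Phi(\bar q)$ carries the Euclidean weight $r\,dr$, whereas the Lehto integral one must force to diverge carries the logarithmic weight $dr/r$: a large value of $\bar q$ near $r=0$ costs little in the former but actually aids convergence in the latter. Reconciling the two is not achievable by a crude estimate on the level sets of $\bar q$; it is exactly here that both the convexity of $\Phi$ and the sharp form of the divergence integral (\ref{eq5!}) are indispensable, and this is the analytic kernel packaged inside Corollary~3.2 of \cite{RSY3}. Accordingly, I would either invoke that corollary directly, as the author evidently intends, or, for a self-contained treatment, reproduce its one-dimensional argument built on the chain of equivalences of Proposition~\ref{pr2}.
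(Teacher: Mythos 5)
Your proposal is correct and takes essentially the same route as the paper, whose entire proof consists of the single sentence that the proposition follows by Corollary~3.2 in \cite{RSY3} with $\lambda=1$ together with Proposition~\ref{pr2}. The extra details you supply (localization to $B(z_0,\varepsilon_0)\subset D$, polar coordinates with Fubini, Jensen's inequality for the convex $\Phi$, and the identification $q(z_0,r)=2\pi r\,\bar q(z_0,r)$ turning (\ref{eq3.333A}) into the Lehto-type divergence) are a faithful sketch of the mechanism packaged inside that cited corollary, so they complement rather than change the paper's argument.
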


Combining Proposition~\ref{pr3} with Theorem~\ref{th3} we obtain the
following significant results with the Orlicz type criteria.

\begin{theorem}\label{th4} Let a function $B: D\to{\mathbb B}^{2\times
2}$ be measurable and let $\mu_B$ be given by the formula
(\ref{Mu}). Suppose that
\begin{equation}\label{eqINTEG}
\int\limits_{D}\Phi\left(K_{\mu_B}(z)\right)\,d{\cal L}(z)\ <\
\infty
\end{equation} for a convex non-decreasing function $\Phi:(0,\infty]\to(0,\infty]$
satisfying the condition (\ref{eq5!}).

Then the generalized Cauchy-Riemann equation (\ref{BM}) has a
regular homeomorphic solution $(u,v)$ such that $u={\rm Re}\,f$ and
$v={\rm Im}\,f$, where $f=f_{\mu_B}$ is a regular homeomorphic
solution of Beltrami equation (\ref{B}) with $\mu=\mu_B$.

Furthermore, if $K_{\mu_B}\in L^1(D)$ and (\ref{eqT}) holds for all
$z_0\in \overline D$, then $(u,v)$ can be extended by continuity to
the whole plane as a regular homeomorphic solution of (\ref{BM})
with $B$ extended by the matrix $\mathbb H$ outside~$D$.
\end{theorem}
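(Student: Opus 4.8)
The plan is to reduce Theorem \ref{th4} to the Lehto-type criterion of Theorem \ref{th3} via the integral-condition machinery of Proposition \ref{pr3} and Proposition \ref{pr2}. The key observation is that the hypothesis (\ref{eqINTEG}) is exactly an Orlicz-type integrability condition on the dilatation quotient $K_{\mu_B}$, and that the conclusion of Proposition \ref{pr3}, namely the divergence (\ref{eq3.333A}), is precisely the Lehto divergence condition (\ref{eqLEHTO}) once we identify the dominating function $Q$ with $K_{\mu_B}$.

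\medskip

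First I would set $Q(z) := K_{\mu_B}(z)$. The assumption $|\mu_B|<1$ built into the definition of $\mathbb B^{2\times 2}$ guarantees $K_{\mu_B}(z) = (1+|\mu_B|)/(1-|\mu_B|) \geqslant 1$, so $Q:D\to[1,\infty]$ is an admissible measurable function for Proposition \ref{pr3}. Since $\Phi$ is convex, non-decreasing, and satisfies (\ref{eq5!}), and since (\ref{eqINTEG}) furnishes exactly the finiteness $\int_D \Phi(Q(z))\,d{\cal L}(z) < \infty$ required as (\ref{eq5.555}), Proposition \ref{pr3} applies verbatim and yields
\[
\int\limits_0^{\varepsilon_0}\frac{dr}{q(z_0,r)} = \infty \qquad \forall\ z_0\in D,\ \varepsilon_0\in(0,{\rm dist}(z_0,\partial D)),
\]
where $q(z_0,r)$ is the $L^1$-norm of $K_{\mu_B}$ over the circle $\mathbb S(z_0,r)$.

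\medskip

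Next I would invoke the connection (\ref{eqConnect}), which gives $K^T_{\mu_B}(z,z_0)\leqslant K_{\mu_B}(z)$ pointwise. Integrating this inequality over the circle $C(z_0,r)$ shows that the tangential circular norm $k^T_{\mu_B}(z_0,r)$ is dominated by $q(z_0,r)$, hence $1/q(z_0,r)\leqslant 1/k^T_{\mu_B}(z_0,r)$. Comparing integrals then forces
\[
\int\limits_0^{\varepsilon(z_0)}\frac{dr}{k^T_{\mu_B}(z_0,r)} \geqslant \int\limits_0^{\varepsilon_0}\frac{dr}{q(z_0,r)} = \infty
\]
for each $z_0\in D$, which is exactly the Lehto divergence hypothesis (\ref{eqLEHTO}) of Theorem \ref{th3}. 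The local integrability $K_{\mu_B}\in L^1_{\rm loc}(D)$ needed by Theorem \ref{th3} also follows, since (\ref{eqINTEG}) together with convexity of $\Phi$ and $\Phi(t)\to\infty$ forces local integrability of $K_{\mu_B}$. Applying Theorem \ref{th3} then delivers the regular homeomorphic solution $(u,v)=({\rm Re}\,f,{\rm Im}\,f)$ of (\ref{BM}), and the boundary extension statement transfers identically once (\ref{eqT}) is assumed on all of $\overline D$.

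\medskip

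\textbf{The main obstacle} will be the careful verification that the circular-norm comparison $k^T_{\mu_B}(z_0,r)\leqslant q(z_0,r)$ really does reverse correctly under the reciprocal, so that divergence of $\int dr/q$ genuinely propagates to divergence of $\int dr/k^T$; this is a direction-sensitive estimate and one must ensure no set of measure zero (where $k^T$ might vanish or where the circle degenerates) spoils the monotonicity. The other delicate point is checking that the hypotheses of Proposition \ref{pr3}, which are stated for a generic dominant $Q$, are met with $Q=K_{\mu_B}$ itself rather than a strict dominant — here one simply takes $Q\equiv K_{\mu_B}$, so the domination is the trivial equality, but one must confirm that the convexity-plus-(\ref{eq5!}) route through Proposition \ref{pr2} is exactly what Corollary 3.2 in \cite{RSY3} (cited in Proposition \ref{pr3}) requires. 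Everything else is a direct chain of citations.
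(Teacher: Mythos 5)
Your proposal is correct and follows exactly the paper's own route: the paper proves Theorem \ref{th4} in one line by combining Proposition \ref{pr3} (applied with $Q=K_{\mu_B}$) with the Lehto-type criterion of Theorem \ref{th3}, which is precisely your chain, including the passage from $q(z_0,r)$ to $k^T_{\mu_B}(z_0,r)$ via the pointwise bound $K^T_{\mu_B}(z,z_0)\leqslant K_{\mu_B}(z)$ from (\ref{eqConnect}). Your additional verifications (that $K_{\mu_B}\in L^1_{\rm loc}(D)$ follows from (\ref{eqINTEG}) since (\ref{eq5!}) forces $\Phi$ to be unbounded and hence affinely minorized, and that the reciprocal inequality propagates the divergence since $K^T_{\mu_B}\geqslant K^{-1}_{\mu_B}>0$) are exactly the details the paper leaves implicit.
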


\begin{remark}\label{rem7} By Theorem~5.1 in \cite{RSY2}, condition (\ref{eq5!}) is not only
sufficient but also necessary to have the conclusions of
Theorem~\ref{th4} to all generalized Cauchy-Riemann equations
(\ref{BM}) with the integral restriction (\ref{eqINTEG}).
\end{remark}

In particular, choosing here $\ \Phi(t)\ :=\ {\rm exp}\,{\alpha t}\
$ with $\ \alpha
> 0\ $, we obtain the following.

\begin{corollary}\label{cor6} Let a function $B: D\to{\mathbb B}^{2\times
2}$ be measurable and let $\mu_B$ be given by the formula
(\ref{Mu}). Suppose that, for some $\alpha>0$,
\begin{equation}\label{eqEXP}\int\limits_{D}{\rm exp}\,{\alpha
K_{\mu_B}(z)}\ d{\cal L}(z)\ <\ \infty\ .
\end{equation}

Then the generalized Cauchy-Riemann equation (\ref{BM}) has a
regular homeomorphic solution $(u,v)$ such that $u={\rm Re}\,f$ and
$v={\rm Im}\,f$, where $f=f_{\mu_B}$ is a regular homeomorphic
solution of Beltrami equation (\ref{B}) with $\mu=\mu_B$.

Furthermore, if $K_{\mu_B}\in L^1(D)$ and (\ref{eqT}) holds for all
$z_0\in \overline D$, then $(u,v)$ can be extended by continuity to
the whole plane as a regular homeo\-morphic solution of (\ref{BM})
with $B$ extended by the matrix $\mathbb H$ outside~$D$.
\end{corollary}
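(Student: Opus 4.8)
The plan is to obtain this statement as a direct specialization of Theorem~\ref{th4} with the particular choice $\Phi(t):={\rm exp}\,{\alpha t}$, $\alpha>0$. To do so, I first verify that this $\Phi$ meets the two structural requirements imposed on the generating function in Theorem~\ref{th4}, namely that it is convex and non-decreasing on $(0,\infty]$ and that it satisfies the divergence condition (\ref{eq5!}). Convexity and monotonicity of the exponential are immediate, so the only point requiring any computation is (\ref{eq5!}).

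For that computation I would note that $\log\Phi(t)=\alpha t$, whence
$$
\int\limits_{\Delta}^{\infty}\log\Phi(t)\,\frac{dt}{t^{2}}\ =\ \alpha\int\limits_{\Delta}^{\infty}\frac{dt}{t}\ =\ +\infty
$$
for every $\Delta>0$. Thus (\ref{eq5!}) holds with the exponential as generating function.

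With these verifications in hand, the integral hypothesis (\ref{eqINTEG}) of Theorem~\ref{th4}, read for this particular $\Phi$, becomes precisely the exponential integrability assumption (\ref{eqEXP}) of the corollary. Therefore all hypotheses of Theorem~\ref{th4} are satisfied, and both conclusions follow verbatim: the existence of a regular homeomorphic solution $(u,v)$ of (\ref{BM}) in $D$ with $u={\rm Re}\,f$, $v={\rm Im}\,f$ for the regular homeomorphic solution $f=f_{\mu_B}$ of (\ref{B}), and, under the additional assumptions $K_{\mu_B}\in L^1(D)$ and (\ref{eqT}) on all of $\overline D$, the continuous extension of $(u,v)$ to the whole plane as a regular homeomorphic solution of (\ref{BM}) with $B$ extended by $\mathbb H$ outside~$D$.

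I expect essentially no obstacle here: the entire substance is already packaged in Theorem~\ref{th4}, and the single step beyond invoking it is the elementary check that $\Phi(t)={\rm exp}\,{\alpha t}$ satisfies (\ref{eq5!}), which reduces to the divergence of the harmonic integral $\int_\Delta^\infty dt/t$.
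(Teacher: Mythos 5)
Your proposal is correct and follows exactly the paper's route: the corollary is stated there as the immediate specialization of Theorem~\ref{th4} to $\Phi(t)=\exp(\alpha t)$, with the hypothesis (\ref{eqINTEG}) turning into (\ref{eqEXP}). Your explicit check that $\int_{\Delta}^{\infty}\log\Phi(t)\,dt/t^{2}=\alpha\int_{\Delta}^{\infty}dt/t=+\infty$ is precisely the (elementary) verification of (\ref{eq5!}) that the paper leaves implicit.
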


\section{On the Dirichlet problem with continuous data}

In the previous section, we formulated a series of criteria for
existence of regular solutions $(u,v)$ to the generalized
Cauchy-Riemann equations (\ref{BM}) which generated a homeomorphic
correspondence $(x,y)\mapsto(u,v)$ in domains of the plane $\mathbb
R^2$. If we begin to consider a boundary value problem, then the
request on the homeomorphism becomes redundant and the problem
becomes generally unsolvable.

\medskip

Let us consider the {\bf Dirichlet problem} for the generalized
Cauchy-Riemann equations (\ref{BM}) consisting in finding its
solutions $(u,v)$ with prescribed continuous data $\varphi:\partial
D\to\mathbb R$ of potential $u$ at the boundary
\begin{equation}\label{eqDIR}
\lim\limits_{z\to\zeta}u(z)\ =\ \varphi(\zeta)\qquad\forall\
\zeta\in\partial D
\end{equation}
in arbitrary bounded simply connected domains $D$ in $\mathbb R^2$.

\medskip

Given a simply connected domain $D$ in $\mathbb R^2$, we say that a
pair $(u,v)$ of continuous functions $u:D\to\mathbb R$ and
$v:D\to\mathbb R$ in the class $W^{1,1}_{\rm loc}$ is a {\bf regular
solution of the Dirichlet problem} (\ref{eqDIR}) for the generalized
Cauchy-Riemann equation (\ref{BM}) in $D$ if $(u,v)$ satisfies
(\ref{BM}) a.e. in $D$ and, moreover, $\nabla\, u\neq 0$ and
$\nabla\, v\neq 0$ a.e. in $D$, and the correspondence
$(x,y)\mapsto(u,v)$ is a discrete and open mapping of $D$ into
$\mathbb R^2$.

\medskip

Recall that a mapping of a domain $D$ in $\mathbb C$ ($\mathbb R^2$)
into $\mathbb C$ ($\mathbb R^2$)  is called {\bf discrete} if the
preimage of each point in $\mathbb C$ ($\mathbb R^2$) consists of
isolated points in $D$ and {\bf open} if the mapping maps every open
set in $D$ onto an open set in $\mathbb C$ ($\mathbb R^2$). Note
that continuity, discreteness and openness are {\bf characteristic
topological properties of analytic functions} because each analytic
function has these properties and, moreover, by the known Stoilow
theorem, see e.g. \cite{Sto}, each function in the complex plane
domain with the given properties has a representation as a
composition of some analytic function and a homeomorphism.

\medskip

We apply here as a basis the article \cite{GRSY22}, where it was
proved a general lemma on the existence of the regular solutions $f$
of the corresponding Dirichlet problem with continuous boundary data
\begin{equation}\label{eqDIRICHLET}\lim\limits_{z\to\zeta}{\rm
Re}\,f(z)=\varphi(\zeta)\qquad\forall\ \zeta\in\partial
D\end{equation} for the Beltrami equations (\ref{B}) in arbitrary
bounded simply connected domains $D$ in $\mathbb C$. Namely, arguing
similarly to the proof of Lemma~\ref{lem1}, we obtain the following
general lemma on the basis of Lemma 4 in \cite{GRSY22}.

\begin{lemma}\label{lem3} Let $D$ be a bounded simply connected domain in $\mathbb
R^2$, let $B:D\to\mathbb B^{2\times 2}$ be a measurable function in
$D$ and let $\mu=\mu_B$ be given by the formula (\ref{Mu}) with
$K_{\mu_B}\in L^1(D)$. Suppose also that
\begin{equation}\label{3omalMA}
\int\limits_{\varepsilon<|z-z_0|<\varepsilon_0}
K^T_{\mu_B}(z,z_0)\cdot\psi^2_{z_0,\varepsilon}(|z-z_0|)\,d{\cal
L}(z)=o(I_{z_0}^{2}(\varepsilon))\quad\ \ \forall\
z_0\in\overline{D}\end{equation} as $\varepsilon\to0$ for some
$\varepsilon_0=\varepsilon(z_0)>0$ and a family of measurable
functions $\psi_{z_0,\varepsilon}: (0,\varepsilon_0)\to(0,\infty)$
such that
\begin{equation}\label{eq3.5.3MA}
I_{z_0}(\varepsilon)\colon
=\int\limits_{\varepsilon}^{\varepsilon_0}
\psi_{z_0,\varepsilon}(t)\,dt<\infty\qquad\forall\
\varepsilon\in(0,\varepsilon_0)\,.\end{equation}

Then the generalized Cauchy-Riemann equation (\ref{BM}) has a
regular solution $(u,v)$ of the Dirichlet problem (\ref{eqDIR}) in
$D$ for each continuous inconstant boundary data $\varphi:\partial
D\to{\Bbb R}$ such that $u={\rm Re}\,f$ and $v={\rm Im}\,f$, where
$f$ is a regular solution of the Dirichlet problem
(\ref{eqDIRICHLET}) for the Beltrami equation (\ref{B}) with
$\mu=\mu_B$.
\end{lemma}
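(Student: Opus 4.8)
The plan is to mirror the proof of Lemma~\ref{lem1}, replacing the existence theorem for homeomorphic solutions of the Beltrami equation with the corresponding existence theorem for the Dirichlet problem. First I would invoke Lemma 4 in \cite{GRSY22}: since $\mu=\mu_B$ satisfies $K_{\mu_B}\in L^1(D)$ together with the tangential condition (\ref{3omalMA})--(\ref{eq3.5.3MA}) for every $z_0\in\overline D$, that lemma furnishes a regular solution $f$ of the Dirichlet problem (\ref{eqDIRICHLET}) for the Beltrami equation (\ref{B}) with $\mu=\mu_B$, for each continuous inconstant boundary datum $\varphi$. The hypotheses of the present lemma are arranged to coincide exactly with those of Lemma 4 in \cite{GRSY22}, so this step is a direct citation.

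Next I would set $u:={\rm Re}\,f$ and $v:={\rm Im}\,f$ and apply Proposition~\ref{pr} (see also Remark~\ref{rem}) pointwise to conclude that the pair $(u,v)$ satisfies the generalized Cauchy-Riemann equation (\ref{BM}) almost everywhere in $D$ with the prescribed matrix coefficient $B$. Because $u={\rm Re}\,f$, the boundary relation (\ref{eqDIR}) for $u$ is then immediate from (\ref{eqDIRICHLET}): both express $\lim_{z\to\zeta}u(z)=\varphi(\zeta)$ for all $\zeta\in\partial D$.

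It remains to verify that $(u,v)$ is a \emph{regular} solution in the sense defined above, i.e. that $\nabla u\neq 0$ and $\nabla v\neq 0$ a.e. and that the correspondence $(x,y)\mapsto(u,v)$ is discrete and open. For the gradient conditions I would argue exactly as in Lemma~\ref{lem1}: the determinant of the Jacobian matrix (\ref{J}) equals $u_xv_y-u_yv_x=J_f=|f_z|^2-|f_{\bar z}|^2$, which is positive a.e. since $f$ is a regular solution, and this determinant vanishes at any point where either $\nabla u=0$ or $\nabla v=0$. For discreteness and openness, the decisive observation is that under the identification $\mathbb C\leftrightarrow\mathbb R^2$ the map $(x,y)\mapsto(u,v)$ is literally the map $f$; hence the topological properties of $f$ as a regular solution of the Dirichlet problem (continuity, discreteness and openness, which are the characteristic topological properties of analytic-type maps via the Stoilow representation) transfer verbatim to $(u,v)$.

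The only genuinely substantive input is the existence result from \cite{GRSY22}, where all the analytic difficulty---controlling the tangent dilatation $K^T_{\mu_B}$ and producing a solution with the stated topological regularity up to the boundary---has already been resolved. Once that theorem is granted, the present lemma follows by the purely algebraic transfer supplied by Proposition~\ref{pr} together with the elementary Jacobian identity, so I expect no essential obstacle beyond checking that the hypotheses have been stated so as to match those of Lemma 4 in \cite{GRSY22} precisely.
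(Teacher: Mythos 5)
Your proposal is correct and takes essentially the same route as the paper: the paper's own proof consists precisely of invoking Lemma 4 in \cite{GRSY22} for the Dirichlet problem to the Beltrami equation and then "arguing similarly to the proof of Lemma~\ref{lem1}", i.e., transferring to $(u,v)=({\rm Re}\,f,\,{\rm Im}\,f)$ via Proposition~\ref{pr} and Remark~\ref{rem} together with the Jacobian determinant observation. Your write-up simply makes explicit the steps the paper leaves implicit (the boundary condition, the gradient conditions, and the transfer of discreteness and openness from $f$ to the pair $(u,v)$), all handled in the same way.
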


As it was before, we assume here and further that the dilatation
quotients $K_{\mu}(z)$ and $K^T_{\mu}(z,z_0)$ of the Beltrami
equation (\ref{B}) with the complex coefficient $\mu=\mu_B$ are
extended to the whole complex plane $\mathbb C$ by $1$ outside the
domain $D$.

\begin{remark}\label{remH}
Note also that in turn $f$ has the representation as the composition
${\cal A}\circ g|_D$, where $g:\mathbb C\to\mathbb C$ is a regular
homeomorphic solution of the Beltrami equation (\ref{B}) in $\mathbb
C$ with $\mu=\mu_B$ extended by zero outside~$D$ with hydrodynamic
normalization at infinity and $\cal A$ is an analytic function in
the domain $D_*:=g(D)$, and
\begin{equation}\label{eqHYDROmA}
u\ =\ {\cal H}\circ g|_D\ ,\ \ \ \ \ \ \hbox{$g(z)\, =\ z\, +\,
o(1)$\ \ \ as\ \ \ $z\to\infty$}\ ,
\end{equation}
${\cal H}:D_*\to\mathbb C$  is the unique harmonic function with the
Dirichlet condition
\begin{equation}\label{eqHOLOMORPHICMA}
\lim_{\xi\to\zeta}\ {\cal H}(\xi)\ =\ \varphi_*(\zeta)\ \ \ \ \
\forall\ \zeta\in\partial D_*\ ,\ \ \ \ \ \mbox{where
$\varphi_*:=\varphi\circ g^{-1}$.}
\end{equation}
\end{remark}

Note, the main difference of the integral conditions (\ref{3omalMA})
in Lemma \ref{lem3} and others in the following consequences that
they must take place here for all boundary points also but not only
for inner points.

\medskip

Next, arguing similarly to Section 3, we derive from
Lemma~\ref{lem3} the following series of interesting consequences on
the Dirichlet problem (\ref{eqDIR}) for the generalized
Cauchy-Riemann equations (\ref{BM}).

\begin{theorem}\label{th5} Let $D$ be a bounded simply connected domain in $\mathbb
R^2$, let $B:D\to\mathbb B^{2\times 2}$ be a measurable function in
$D$ and let $\mu=\mu_B$ be given by the formula (\ref{Mu}) with
$K_{\mu_B}\in L^1(D)$. Suppose that $K^T_{\mu_B}(z,z_0)\leqslant
Q_{z_0}(z)$ a.e. in $U_{z_0}$ for every point $z_0\in \overline{D}$,
a neighborhood $U_{z_0}$ of $z_0$ and a function $Q_{z_0}:
U_{z_0}\to[0,\infty]$ in the class ${\rm FMO}({z_0})$.

Then the generalized Cauchy-Riemann equation (\ref{BM}) has a
regular solution $(u,v)$ of the Dirichlet problem (\ref{eqDIR}) in
$D$ for each continuous inconstant boundary data $\varphi:\partial
D\to{\Bbb R}$ such that $u={\rm Re}\,f$ and $v={\rm Im}\,f$, where
$f$ is a regular solution of the Dirichlet problem
(\ref{eqDIRICHLET}) for the Beltrami equation (\ref{B}) with
$\mu=\mu_B$.
\end{theorem}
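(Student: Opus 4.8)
The plan is to reduce Theorem~\ref{th5} to the already-established Lemma~\ref{lem3} exactly as Theorem~\ref{th1} was reduced to Lemma~\ref{lem1} in Section~3, the only difference being that the pointwise hypothesis must now be verified at every point of the \emph{closed} domain $\overline D$ rather than merely at interior points. First I would observe that by hypothesis $K_{\mu_B}\in L^1(D)$, which supplies the global integrability required by Lemma~\ref{lem3}. It then remains to produce, for each $z_0\in\overline D$, a family of weight functions $\psi_{z_0,\varepsilon}$ satisfying the finiteness condition (\ref{eq3.5.3MA}) for which the asymptotic estimate (\ref{3omalMA}) holds.

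The natural choice, following the remark preceding Theorem~\ref{th1}, is the logarithmic weight
\[
\psi_{z_0,\varepsilon}(t)\ =\ \psi(t)\ =\ \frac{1}{t\,\log(1/t)}\ ,
\]
which is independent of $\varepsilon$. For this weight one has $I_{z_0}(\varepsilon)=\int_\varepsilon^{\varepsilon_0}\psi(t)\,dt=\log\log(1/\varepsilon)-\log\log(1/\varepsilon_0)$, so that $I_{z_0}(\varepsilon)<\infty$ for every $\varepsilon\in(0,\varepsilon_0)$ and $I_{z_0}(\varepsilon)\to\infty$ as $\varepsilon\to0$; in particular (\ref{eq3.5.3MA}) holds. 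The key step is then to bound the integral in (\ref{3omalMA}): using the pointwise dominance $K^T_{\mu_B}(z,z_0)\leqslant Q_{z_0}(z)$ a.e.\ on $U_{z_0}$, I would estimate
\[
\int\limits_{\varepsilon<|z-z_0|<\varepsilon_0}
K^T_{\mu_B}(z,z_0)\,\psi^2(|z-z_0|)\,d{\cal L}(z)
\ \leqslant\
\int\limits_{\varepsilon<|z-z_0|<\varepsilon_0}
\frac{Q_{z_0}(z)\,d{\cal L}(z)}{\bigl(|z-z_0|\log\frac{1}{|z-z_0|}\bigr)^2}\ .
\]
Because $Q_{z_0}\in{\rm FMO}(z_0)$ is non-negative, Lemma~\ref{lem2} with $n=2$ gives that this last integral is $O\bigl(\log\log\frac{1}{\varepsilon}\bigr)$ as $\varepsilon\to0$. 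Since $I_{z_0}^2(\varepsilon)$ grows like $\bigl(\log\log\frac1\varepsilon\bigr)^2$, the right-hand side is $o\bigl(I_{z_0}^2(\varepsilon)\bigr)$, which is precisely (\ref{3omalMA}).

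Having verified the hypotheses of Lemma~\ref{lem3} at every $z_0\in\overline D$, I would invoke that lemma directly to conclude that for each continuous inconstant boundary datum $\varphi:\partial D\to\mathbb R$ the generalized Cauchy-Riemann equation (\ref{BM}) possesses a regular solution $(u,v)$ of the Dirichlet problem (\ref{eqDIR}) with $u={\rm Re}\,f$, $v={\rm Im}\,f$, where $f$ is the corresponding regular solution of the Dirichlet problem (\ref{eqDIRICHLET}) for the Beltrami equation with $\mu=\mu_B$. The main obstacle is the uniformity of the estimate across boundary points: one must ensure that the neighbourhood $U_{z_0}$ and radius $\varepsilon_0=\varepsilon(z_0)$ can be chosen so that Lemma~\ref{lem2} applies with the FMO property of $Q_{z_0}$ holding precisely at $z_0$. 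This is handled by the convention, stated just before Remark~\ref{remH}, that $K_{\mu_B}$ and $K^T_{\mu_B}$ are extended by $1$ outside $D$, so that the dominating FMO function and the integral estimate remain meaningful even when $z_0$ lies on $\partial D$ and part of the annulus $\{\varepsilon<|z-z_0|<\varepsilon_0\}$ falls outside $D$; there $K^T_{\mu_B}\equiv1$ contributes only a bounded-oscillation term that does not affect the FMO dominance.
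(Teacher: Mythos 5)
Your proposal is correct and takes essentially the same route as the paper: the paper obtains Theorem~\ref{th5} from Lemma~\ref{lem3} ``arguing similarly to Section 3,'' i.e., by choosing $\psi_{z_0,\varepsilon}(t)=1/(t\log(1/t))$ and applying Lemma~\ref{lem2} to the FMO dominant $Q_{z_0}$ so that the integral in (\ref{3omalMA}) is $O(\log\log(1/\varepsilon))=o(I^2_{z_0}(\varepsilon))$, exactly as you do. Your final remark on boundary points is likewise in line with the paper's convention of extending $K^T_{\mu_B}(z,z_0)$ by $1$ outside $D$.
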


\begin{corollary}\label{cor7} Let $D$ be a bounded simply connected domain in $\mathbb
R^2$, let $B:D\to\mathbb B^{2\times 2}$ be a measurable function in
$D$ and let $\mu=\mu_B$ be given by the formula (\ref{Mu}) with
$K_{\mu_B}\in L^1(D)$. Suppose that
\begin{equation}\label{eqMEANmA}\overline{\lim\limits_{\varepsilon\to0}}\quad
\dashint_{B(z_0,\varepsilon)}K^T_{\mu_B}(z,z_0)\,d{\cal
L}(z)<\infty\qquad\forall\ z_0\in\overline{D}\, .\end{equation} Then
all conclusions of Theorem \ref{th5} hold.
\end{corollary}

\begin{corollary}\label{cor8} Let $D$ be a bounded simply connected domain in $\mathbb
R^2$, let $B:D\to\mathbb B^{2\times 2}$ be a measurable function in
$D$ and let $\mu=\mu_B$ be given by the formula (\ref{Mu}). Suppose
that $K_{\mu_B}$ has a dominant $Q:\mathbb C\to[1,\infty)$ in the
class {\rm BMO}$(\overline D)$. Then all conclusions of Theorem
\ref{th5} hold.
\end{corollary}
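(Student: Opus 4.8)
The plan is to deduce the corollary directly from Theorem~\ref{th5}, using the single dominant $Q$ as the admissible majorant $Q_{z_0}$ at every point $z_0\in\overline D$ at once. Since $Q\in{\rm BMO}(\overline D)$, by the definition of this class there is a domain $D_*\supset\overline D$ on which $Q$ (already defined on all of $\mathbb C$) lies in ${\rm BMO}(D_*)$. To invoke Theorem~\ref{th5} I must check two things: the global integrability $K_{\mu_B}\in L^1(D)$, which is a standing hypothesis there, and the membership $Q\in{\rm FMO}(z_0)$ together with the pointwise bound $K^T_{\mu_B}(z,z_0)\le Q(z)$ near each $z_0\in\overline D$.

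For the integrability I would recall from Remark~\ref{rem3} that ${\rm BMO}_{\rm loc}\subset L^p_{\rm loc}$ for every finite $p\ge 1$. As $\overline D$ is compact and contained in the open set $D_*$, the function $Q$ is integrable over $D$; since $K_{\mu_B}\le Q$ by the meaning of dominant, it follows that $K_{\mu_B}\in L^1(D)$, so the standing integrability requirement of Theorem~\ref{th5} is met.

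For the FMO majorization I would use the chain ${\rm BMO}(D_*)\subset{\rm BMO}_{\rm loc}(D_*)\subset{\rm FMO}(D_*)$, again from Remark~\ref{rem3}, so that $Q\in{\rm FMO}(z_0)$ for every $z_0\in D_*$, in particular for every $z_0\in\overline D$. The pointwise bound then comes from the right inequality in (\ref{eqConnect}): inside $D$ one has $K^T_{\mu_B}(z,z_0)\le K_{\mu_B}(z)\le Q(z)$, while outside $D$ the quotient $K^T_{\mu_B}$ has been extended by $1$ and $Q\ge 1$, so the bound persists on a full neighborhood $U_{z_0}\subset D_*$. Taking $Q_{z_0}:=Q$ therefore verifies the remaining hypothesis of Theorem~\ref{th5}, whose conclusions then follow verbatim.

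The only point needing care is the uniform handling of boundary points $z_0\in\partial D$, where an FMO majorant must be available in a genuine two-sided neighborhood of $z_0$ rather than merely on the $D$-side. This is exactly what the stronger requirement $Q\in{\rm BMO}(\overline D)$ (BMO on an open $D_*\supset\overline D$, not only on $D$) supplies, so I do not expect any real obstacle; the corollary is essentially a packaging of Theorem~\ref{th5} with the standard inclusion ${\rm BMO}\subset{\rm FMO}$.
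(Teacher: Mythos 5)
Your proposal is correct and takes essentially the same route as the paper, which states this corollary without a separate proof precisely as the BMO-type consequence of Theorem~\ref{th5}: one uses ${\rm BMO}(\overline D)\subset{\rm BMO}_{\rm loc}\subset{\rm FMO}$ (Remark~\ref{rem3}) together with the inequality $K^T_{\mu_B}(z,z_0)\leqslant K_{\mu_B}(z)\leqslant Q(z)$ from (\ref{eqConnect}), the convention that $K^T_{\mu_B}$ is extended by $1$ outside $D$ being absorbed by $Q\geqslant 1$. Your explicit check that the BMO dominant also yields the standing hypothesis $K_{\mu_B}\in L^1(D)$ (via ${\rm BMO}_{\rm loc}\subset L^1_{\rm loc}$ on a neighborhood of the compact $\overline D$) fills in a detail the paper leaves tacit but entirely within its intended argument.
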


\begin{remark}\label{rem9}
In particular, all these conclusions hold if $Q\in{\rm
W}^{1,2}(\overline D)$ because $W^{\,1,2}_{\rm loc} \subset {\rm
VMO}(\overline D)$.
\end{remark}

\begin{corollary}\label{cor9} Let $D$ be a bounded simply connected domain in $\mathbb
R^2$, let $B:D\to\mathbb B^{2\times 2}$ be a measurable function in
$D$ and let $\mu=\mu_B$ be given by the formula (\ref{Mu}). Suppose
that $K_{\mu_B}(z)\leqslant Q(z)$ a.e. in $D$ with a function $Q$ in
the class ${\rm FMO}(\overline{D})$. Then all the given conclusions
hold.
\end{corollary}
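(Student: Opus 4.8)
The plan is to deduce this corollary directly from Theorem~\ref{th5} by exhibiting, at every point $z_0\in\overline D$, an admissible FMO dominant of the tangent dilatation quotient $K^T_{\mu_B}(\cdot,z_0)$, and then checking the remaining integrability hypothesis $K_{\mu_B}\in L^1(D)$.

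First I would invoke the universal comparison (\ref{eqConnect}), whose right-hand inequality gives
\[
K^T_{\mu_B}(z,z_0)\ \leqslant\ K_{\mu_B}(z)\ \leqslant\ Q(z)
\]
for almost every $z\in D$ and for every choice of the center $z_0$. Consequently the single function $Q$ --- more precisely its restriction $Q_{z_0}:=Q|_{U_{z_0}}$ to any neighborhood $U_{z_0}$ of $z_0$ --- serves simultaneously as the majorant $Q_{z_0}$ of $K^T_{\mu_B}(z,z_0)$ required in Theorem~\ref{th5}, and this for every $z_0\in\overline D$ at once. Since by hypothesis $Q\in{\rm FMO}(\overline D)$, it lies in ${\rm FMO}(z_0)$ at each such point, so the FMO requirement on $Q_{z_0}$ in Theorem~\ref{th5} is met verbatim.

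It then remains only to verify the global integrability $K_{\mu_B}\in L^1(D)$ demanded by Theorem~\ref{th5}. Here I would use that membership in ${\rm FMO}(z_0)$ forces $Q$ to be integrable on a neighborhood of $z_0$ (cf. the standing assumption accompanying (\ref{FMO_eq2.4}) and Remark~\ref{rem3}, where ${\rm FMO}\subset L^1_{\rm loc}$). Covering the compact set $\overline D$ by finitely many such neighborhoods and recalling that $D$ is bounded, one concludes $Q\in L^1(D)$, whence $K_{\mu_B}\leqslant Q$ a.e. yields $K_{\mu_B}\in L^1(D)$.

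With both hypotheses of Theorem~\ref{th5} thus verified, its conclusion --- existence of a regular solution $(u,v)$ of the Dirichlet problem (\ref{eqDIR}) for (\ref{BM}) for every continuous inconstant datum $\varphi:\partial D\to\mathbb R$, with $u={\rm Re}\,f$, $v={\rm Im}\,f$ and $f$ a regular solution of (\ref{eqDIRICHLET}) for the Beltrami equation with $\mu=\mu_B$ --- transfers immediately. The only genuinely non-formal point is the passage from the pointwise FMO property to the global $L^1$ bound, and I expect the compactness/finite-cover argument above to dispose of it without real difficulty.
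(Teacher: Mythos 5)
Your proposal is correct and follows essentially the route the paper intends (it gives no separate proof of this corollary): dominate $K^T_{\mu_B}(\cdot,z_0)$ by $Q$ via the right-hand inequality in (\ref{eqConnect}) so that Theorem~\ref{th5} applies at every $z_0\in\overline D$, and deduce $K_{\mu_B}\in L^1(D)$ from the local integrability built into the definition of FMO together with compactness of $\overline D$. The only point you gloss over is that for $z_0\in\partial D$ the neighborhood $U_{z_0}$ leaves $D$, where $K^T_{\mu_B}(\cdot,z_0)$ is extended by $1$ while an FMO extension $Q_*$ of $Q$ need not be $\geqslant 1$ there; this is repaired at no cost by taking $Q_{z_0}:=\max(Q_*,1)$, which still belongs to ${\rm FMO}(z_0)$ by Proposition~\ref{pr1} because truncation from below is $1$-Lipschitz.
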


\begin{theorem}\label{th6} Let $D$ be a bounded simply connected domain in $\mathbb
R^2$, let $B:D\to\mathbb B^{2\times 2}$ be a measurable function in
$D$ and let $\mu=\mu_B$ be given by the formula (\ref{Mu}) with
$K_{\mu_B}\in L^1(D)$. Suppose that
\begin{equation}\label{eqLOGmA}
\int\limits_{\varepsilon<|z-z_0|<\varepsilon_0}K^T_{\mu_B}(z,z_0)\,\frac{d{\cal
L}(z)}{|z-z_0|^2}
=o\left(\left[\log\frac{1}{\varepsilon}\right]^2\right)\qquad\qquad\forall\
z_0\in\overline{D}\end{equation} as $\varepsilon\to 0$ for some
$\varepsilon_0=\varepsilon(z_0)>0$.

Then the generalized Cauchy-Riemann equation (\ref{BM}) has a
regular solution $(u,v)$ of the Dirichlet problem (\ref{eqDIR}) in
$D$ for each continuous inconstant boundary data $\varphi:\partial
D\to{\Bbb R}$ such that $u={\rm Re}\,f$ and $v={\rm Im}\,f$, where
$f$ is a regular solution of the Dirichlet problem
(\ref{eqDIRICHLET}) for the Beltrami equation (\ref{B}) with
$\mu=\mu_B$.
\end{theorem}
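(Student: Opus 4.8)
The plan is to derive Theorem~\ref{th6} directly from Lemma~\ref{lem3} by specializing the functional parameter to $\psi_{z_0,\varepsilon}(t)\equiv\psi(t):=1/t$, exactly as Theorem~\ref{th2} was obtained from Lemma~\ref{lem1} in Section~3. This is the Calderon--Zygmund choice, and the entire argument reduces to checking that the two hypotheses of Lemma~\ref{lem3} become, respectively, the finiteness of a logarithm and the assumed condition (\ref{eqLOGmA}) itself.

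First I would verify the integrability requirement (\ref{eq3.5.3MA}). With $\psi(t)=1/t$ one computes
\[
I_{z_0}(\varepsilon)\ =\ \int\limits_{\varepsilon}^{\varepsilon_0}\frac{dt}{t}\ =\ \log\frac{\varepsilon_0}{\varepsilon}\ <\ \infty
\]
for every $\varepsilon\in(0,\varepsilon_0)$, so (\ref{eq3.5.3MA}) holds automatically. Since $\log(\varepsilon_0/\varepsilon)=\log(1/\varepsilon)+\log\varepsilon_0$, we have $I_{z_0}(\varepsilon)\sim\log(1/\varepsilon)$ as $\varepsilon\to0$, whence
\[
I^2_{z_0}(\varepsilon)\ \sim\ \left[\log\frac{1}{\varepsilon}\right]^2\qquad\text{as }\ \varepsilon\to0\ .
\]

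Next I would rewrite the main integral condition (\ref{3omalMA}) of Lemma~\ref{lem3}. With $\psi^2(|z-z_0|)=1/|z-z_0|^2$ its left-hand side is precisely the integral appearing in (\ref{eqLOGmA}), while by the asymptotic equivalence just noted the right-hand side $o(I^2_{z_0}(\varepsilon))$ coincides with $o([\log(1/\varepsilon)]^2)$. Thus the hypothesis (\ref{eqLOGmA}), assumed for every $z_0\in\overline{D}$, becomes exactly (\ref{3omalMA}) for this choice of $\psi$. Since $B:D\to\mathbb{B}^{2\times 2}$ is measurable, $\mu=\mu_B$ is given by (\ref{Mu}) with $K_{\mu_B}\in L^1(D)$, all hypotheses of Lemma~\ref{lem3} are in force, and the conclusion on the existence of a regular solution $(u,v)$ of the Dirichlet problem (\ref{eqDIR}), with $u={\rm Re}\,f$, $v={\rm Im}\,f$ for the corresponding regular solution $f$ of (\ref{eqDIRICHLET}), follows immediately.

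The argument is essentially routine; the only point requiring a moment's care is the passage from $o(I^2_{z_0}(\varepsilon))$ to $o([\log(1/\varepsilon)]^2)$, which rests on the harmless fact that the additive constant $\log\varepsilon_0$ does not affect the leading logarithmic growth of $I_{z_0}(\varepsilon)$. No genuine obstacle arises beyond this bookkeeping, since the substantive analytic content has already been absorbed into Lemma~\ref{lem3}.
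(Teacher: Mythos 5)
Your proposal is correct and follows exactly the paper's route: the paper derives Theorem~\ref{th6} from Lemma~\ref{lem3} by the same Calderon--Zygmund choice $\psi(t)=1/t$, "arguing similarly to Section 3" where Theorem~\ref{th2} was obtained from Lemma~\ref{lem1}. Your verification that $I_{z_0}(\varepsilon)=\log(\varepsilon_0/\varepsilon)\sim\log(1/\varepsilon)$, so that $o\bigl(I^2_{z_0}(\varepsilon)\bigr)$ and $o\bigl(\left[\log(1/\varepsilon)\right]^2\bigr)$ coincide, is precisely the bookkeeping the paper leaves implicit.
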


\begin{remark}\label{rem10} Choosing in Lemma~\ref{lem3} the function
$\psi(t)=1/(t\log{1/t})$ instead of $\psi(t)=1/t$, we are able to
replace (\ref{eqLOGmA}) by
\begin{equation}\label{eqLOGLOGmA}
\int\limits_{\varepsilon<|z-z_0|<\varepsilon_0}\frac{K^T_{\mu_B}(z,z_0)\,d{\cal
L}(z)} {\left(|z-z_0|\log{\frac{1}{|z-z_0|}}\right)^2}
=o\left(\left[\log\log\frac{1}{\varepsilon}\right]^2\right)\end{equation}
In general, we are able to give here the whole scale of the
corresponding conditions in terms of iterated logarithms, i.e. using
suitable functions $\psi(t)$ of the form
$1/(t\log{1}/{t}\cdot\log\log{1}/{t}\cdot\ldots\cdot\log\ldots\log{1}/{t})$.
\end{remark}

As above, $k_{\mu_B}^T(z_0,r)$ denotes further the integral mean of
$K^T_{{\mu_B}}(z,z_0)$ over the circle $S(z_0,r)\, :=\, \{ z
\in\mathbb C:\, |z-z_0|\,=\, r\}$.

\begin{theorem}\label{th7} Let $D$ be a bounded simply connected domain in $\mathbb
R^2$, $B:D\to\mathbb B^{2\times 2}$ be a measurable function in $D$
and $\mu=\mu_B$ be given by the formula (\ref{Mu}) with
$K_{\mu_B}\in L^1(D)$. Suppose that, for some
$\varepsilon_0=\varepsilon(z_0)>0$,
\begin{equation}\label{eqLEHTOmA}\int\limits_{0}^{\varepsilon_0}
\frac{dr}{rk^T_{\mu_B}(z_0,r)}=\infty\qquad\forall\
z_0\in\overline{D}\ .\end{equation}

Then the generalized Cauchy-Riemann equation (\ref{BM}) has a
regular solution $(u,v)$ of the Dirichlet problem (\ref{eqDIR}) in
$D$ for each continuous inconstant boundary data $\varphi:\partial
D\to{\Bbb R}$ such that $u={\rm Re}\,f$ and $v={\rm Im}\,f$, where
$f$ is a regular solution of the Dirichlet problem
(\ref{eqDIRICHLET}) for the Beltrami equation (\ref{B}) with
$\mu=\mu_B$.
\end{theorem}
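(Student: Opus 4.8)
The plan is to mirror the proof of Theorem~\ref{th3}, replacing the interior existence result (Lemma~\ref{lem1}) by its Dirichlet counterpart (Lemma~\ref{lem3}). The only genuine adjustment is bookkeeping: in Section~3 the symbol $k^T_{\mu_B}(z_0,r)$ denoted the $L^1$-norm of $K^T_{\mu_B}(\cdot,z_0)$ over the circle $C(z_0,r)$ with respect to arc length, whereas here it denotes the integral \emph{mean}, so that the two quantities differ by the factor $2\pi r$. Accordingly, for each fixed $z_0\in\overline D$ I would take the ($\varepsilon$-independent) functional parameter
\[
\psi_0(t)\ :=\ \frac{1}{t\,k^T_{\mu_B}(z_0,t)}\ ,\qquad t\in(0,\varepsilon_0)\ ,\ \ \ \varepsilon_0=\varepsilon(z_0)\ ,
\]
the extra factor $1/t$ being exactly what compensates for the arc-length weight. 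With this choice $I_{z_0}(\varepsilon)=\int_\varepsilon^{\varepsilon_0}\psi_0(t)\,dt$ coincides, up to the constant $2\pi$, with the truncated Lehto integral, so that hypothesis (\ref{eqLEHTOmA}) reads precisely $I_{z_0}(\varepsilon)\to\infty$ as $\varepsilon\to0$.

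The computational heart is a single application of Fubini's theorem in polar coordinates $z=z_0+re^{i\theta}$, $d{\cal L}(z)=r\,dr\,d\theta$. Since $\int_0^{2\pi}K^T_{\mu_B}(z_0+re^{i\theta},z_0)\,d\theta=2\pi\,k^T_{\mu_B}(z_0,r)$, the left-hand side of (\ref{3omalMA}) equals
\[
2\pi\int\limits_\varepsilon^{\varepsilon_0}\psi_0^2(r)\,k^T_{\mu_B}(z_0,r)\,r\,dr\ =\ 2\pi\int\limits_\varepsilon^{\varepsilon_0}\psi_0(r)\,dr\ =\ 2\pi\,I_{z_0}(\varepsilon)\ ,
\]
the middle equality being forced by the normalization $\psi_0(r)\,k^T_{\mu_B}(z_0,r)\,r\equiv1$. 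Because (\ref{eqLEHTOmA}) makes $I_{z_0}(\varepsilon)\to\infty$, we obtain $2\pi\,I_{z_0}(\varepsilon)=o(I_{z_0}^2(\varepsilon))$, which is exactly (\ref{3omalMA}) for this $z_0$.

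The step I expect to be the main obstacle is the finiteness requirement (\ref{eq3.5.3MA}), namely $I_{z_0}(\varepsilon)<\infty$ for each fixed $\varepsilon\in(0,\varepsilon_0)$, since a priori $1/k^T_{\mu_B}$ need not be integrable on $[\varepsilon,\varepsilon_0]$. To control it I would invoke the Cauchy--Schwarz (arithmetic--harmonic mean) inequality on the circle, writing $k_{\mu_B}(z_0,r)$ for the integral mean of $K_{\mu_B}$ over $S(z_0,r)$: using the left inequality $1/K^T_{\mu_B}\le K_{\mu_B}$ in (\ref{eqConnect}),
\[
k^T_{\mu_B}(z_0,r)\cdot k_{\mu_B}(z_0,r)\ \geq\ k^T_{\mu_B}(z_0,r)\cdot\frac{1}{2\pi}\int_0^{2\pi}\frac{d\theta}{K^T_{\mu_B}(z_0+re^{i\theta},z_0)}\ \geq\ 1\ ,
\]
whence $1/k^T_{\mu_B}(z_0,r)\le k_{\mu_B}(z_0,r)$. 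Since $K_{\mu_B}\in L^1(D)$ and $K^T_{\mu_B}$ (hence $K_{\mu_B}$) is extended by $1$ outside $D$, which keeps every boundary point $z_0\in\partial D$ admissible, one then bounds
\[
I_{z_0}(\varepsilon)\ \le\ \int_\varepsilon^{\varepsilon_0}\frac{k_{\mu_B}(z_0,r)}{r}\,dr\ \le\ \frac{1}{2\pi\,\varepsilon^2}\int\limits_{|z-z_0|<\varepsilon_0}K_{\mu_B}(z)\,d{\cal L}(z)\ <\ \infty\ .
\]

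With (\ref{3omalMA}) and (\ref{eq3.5.3MA}) verified for \emph{every} $z_0\in\overline D$, Lemma~\ref{lem3} applies and yields the desired regular solution $(u,v)$ of the Dirichlet problem (\ref{eqDIR}), with $u={\rm Re}\,f$, $v={\rm Im}\,f$, and $f$ the corresponding regular solution of (\ref{eqDIRICHLET}) for the Beltrami equation with $\mu=\mu_B$. The passage from the interior Lehto criterion of Theorem~\ref{th3} to this boundary version requires nothing beyond imposing (\ref{eqLEHTOmA}) on all of $\overline D$, which is built into the hypothesis; that uniformity up to the boundary is the sole feature distinguishing the Dirichlet statement from its interior analogue.
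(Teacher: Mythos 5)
Your proposal is correct and follows essentially the paper's own route: the paper proves Theorem \ref{th7} by "arguing similarly to Section 3," i.e., exactly your choice $\psi_0(t)=1/(t\,k^T_{\mu_B}(z_0,t))$ (adjusted for the integral-mean normalization) fed into Lemma \ref{lem3} via the Fubini computation that reduces (\ref{3omalMA}) to $2\pi I_{z_0}(\varepsilon)=o(I^2_{z_0}(\varepsilon))$. Your explicit verification of the finiteness requirement (\ref{eq3.5.3MA}) via (\ref{eqConnect}) and Cauchy--Schwarz is a detail the paper leaves implicit, and it is carried out correctly.
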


\begin{corollary}\label{cor10} Let $D$ be a bounded simply connected domain in $\mathbb
R^2$, $B:D\to\mathbb B^{2\times 2}$ be a measurable function in $D$
and $\mu=\mu_B$ be given by the formula (\ref{Mu}) with
$K_{\mu_B}\in L^1(D)$. Suppose that
\begin{equation}\label{eqLOGkMA}k^T_{\mu_B}(z_0,\varepsilon)=O\left(\log\frac{1}{\varepsilon}\right)
\qquad\mbox{as}\ \varepsilon\to0\qquad\forall\ z_0\in\overline{D}\
.\end{equation} Then all conclusions of Theorem \ref{th7} hold.
\end{corollary}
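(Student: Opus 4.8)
The plan is to deduce Corollary~\ref{cor10} directly from Theorem~\ref{th7} by checking that the growth bound (\ref{eqLOGkMA}) forces the Lehto-type divergence condition (\ref{eqLEHTOmA}) at every boundary and interior point. Fix $z_0\in\overline{D}$. By the hypothesis (\ref{eqLOGkMA}) there are a constant $C=C(z_0)>0$ and a radius $\delta=\delta(z_0)\in(0,1)$ with
\[
k^T_{\mu_B}(z_0, r)\ \leqslant\ C\,\log\frac{1}{r}\qquad \mbox{for all } r\in(0,\delta)\ .
\]
First I would record that the circular mean $k^T_{\mu_B}(z_0,r)$ is finite and strictly positive for a.e. $r$: since $K_{\mu_B}\in L^1(D)$ and $K^T_{\mu_B}(z,z_0)\leqslant K_{\mu_B}(z)$ by (\ref{eqConnect}), Fubini's theorem makes the means over circles finite for a.e. $r$, while the lower bound $K^T_{\mu_B}\geqslant K^{-1}_{\mu_B}>0$ a.e. in (\ref{eqConnect}) keeps them positive. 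Hence $1/(r\,k^T_{\mu_B}(z_0,r))$ is a well-defined non-negative integrand.

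Next, taking $\varepsilon_0:=\delta$ in (\ref{eqLEHTOmA}) and inverting the displayed upper bound, I would estimate
\[
\int\limits_0^{\varepsilon_0} \frac{dr}{r\,k^T_{\mu_B}(z_0,r)}\ \geqslant\ \frac{1}{C}\int\limits_0^{\delta}\frac{dr}{r\,\log\frac{1}{r}}\ .
\]
The remaining step is the elementary divergence of the last integral: the substitution $s=\log(1/r)$, $ds=-dr/r$, turns it into $\int_{\log(1/\delta)}^{\infty} ds/s=\infty$, where $\delta<1$ guarantees $\log(1/\delta)>0$. Thus (\ref{eqLEHTOmA}) holds at the arbitrary point $z_0\in\overline{D}$, and an application of Theorem~\ref{th7} delivers all of its conclusions, which is exactly the assertion of the corollary.

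I do not anticipate any real obstacle; the argument is a one-line reduction resting on the divergence of $\int_0 dr/(r\log(1/r))$. The only matters needing a touch of care are bookkeeping: the constant $C$ and the threshold $\delta$ are allowed to depend on $z_0$, which is harmless because both the hypothesis of Corollary~\ref{cor10} and the condition (\ref{eqLEHTOmA}) in Theorem~\ref{th7} are stated pointwise in $z_0$; and the well-definedness of the reciprocal integrand, which is secured by the lower estimate in (\ref{eqConnect}). Everything else is routine.
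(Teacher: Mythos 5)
Your proof is correct and is precisely the argument the paper intends: Corollary \ref{cor10} is presented as an immediate consequence of Theorem \ref{th7}, obtained by inverting the bound $k^T_{\mu_B}(z_0,r)\leqslant C\log\frac{1}{r}$ to verify the Lehto-type condition (\ref{eqLEHTOmA}) through the elementary divergence $\int_0^{\delta}\frac{dr}{r\log(1/r)}=\infty$, exactly as you do. Your additional bookkeeping on the finiteness and positivity of the circular means (via (\ref{eqConnect}) and $K_{\mu_B}\in L^1(D)$) is sound and merely makes explicit what the paper leaves implicit.
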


\begin{remark}\label{rem11} In particular, the conclusion of Corollary~\ref{cor10} holds
if
\begin{equation}\label{eqLOGKmA} K^T_{\mu_B}(z,z_0)=O\left(\log\frac{1}{|z-z_0|}\right)\qquad{\rm
as}\quad z\to z_0\quad\forall\ z_0\in\overline{D}\,.\end{equation}
Moreover, the condition (\ref{eqLOGkMA}) can be replaced by the
whole series of more weak conditions
\begin{equation}\label{edLOGLOGkMA}
k^T_{\mu_B}(z_0,\varepsilon)=O\left(\left[\log\frac{1}{\varepsilon}\cdot\log\log\frac{1}
{\varepsilon}\cdot\ldots\cdot\log\ldots\log\frac{1}{\varepsilon}
\right]\right) \qquad\forall\ z_0\in \overline{D}\ .
\end{equation}
\end{remark}

\begin{theorem}\label{th8} Let $D$ be a bounded simply connected domain in $\mathbb
R^2$, $B:D\to\mathbb B^{2\times 2}$ be a measurable function in $D$
and $\mu=\mu_B$ be given by the formula (\ref{Mu}). Suppose that
\begin{equation}\label{eqINTEGRALmA}\int\limits_{U_{z_0}}\Phi_{z_0}\left(K^T_{\mu_B}(z,z_0)\right)\,d{\cal L}(z)<\infty
\qquad\forall\ z_0\in \overline{D}\end{equation} for a neighborhood
$U_{z_0}$ of $z_0$ and a convex non-decreasing function
$\Phi_{z_0}:[0,\infty]\to[0,\infty]$ such that,  for some
$\Delta(z_0)>0$,
\begin{equation}\label{eqINTmA}
\int\limits_{\Delta(z_0)}^{\infty}\log\,\Phi_{z_0}(t)\,\frac{dt}{t^2}\
=\ +\infty\ .\end{equation}

Then the generalized Cauchy-Riemann equation (\ref{BM}) has a
regular solution $(u,v)$ of the Dirichlet problem (\ref{eqDIR}) in
$D$ for each continuous inconstant boundary data $\varphi:\partial
D\to{\Bbb R}$ such that $u={\rm Re}\,f$ and $v={\rm Im}\,f$, where
$f$ is a regular solution of the Dirichlet problem
(\ref{eqDIRICHLET}) for the Beltrami equation (\ref{B}) with
$\mu=\mu_B$.
\end{theorem}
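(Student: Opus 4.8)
The plan is to reduce Theorem~\ref{th8} to the already-established Theorem~\ref{th7} (the Lehto-type criterion) by invoking the equivalence of integral conditions collected in Proposition~\ref{pr2} and the Orlicz-to-Lehto passage in Proposition~\ref{pr3}. The structural observation is that the hypotheses here are purely local around each boundary or interior point $z_0\in\overline D$: we are given an Orlicz-type integrability \eqref{eqINTEGRALmA} of the tangent dilatation $K^T_{\mu_B}(\cdot,z_0)$ on a neighborhood $U_{z_0}$, together with the divergence condition \eqref{eqINTmA} on the generating function $\Phi_{z_0}$. Since Theorem~\ref{th7} already delivers exactly the desired conclusion under the Lehto condition \eqref{eqLEHTOmA}, it suffices to show that \eqref{eqINTEGRALmA}--\eqref{eqINTmA} imply \eqref{eqLEHTOmA} at every $z_0\in\overline D$.

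First I would fix an arbitrary point $z_0\in\overline D$ and apply Proposition~\ref{pr3} with the measurable function $Q(z):=K^T_{\mu_B}(z,z_0)$ on the domain $U_{z_0}$ and with $\Phi:=\Phi_{z_0}$. The integrability hypothesis \eqref{eq5.555} of that proposition is precisely \eqref{eqINTEGRALmA}, and the divergence hypothesis \eqref{eq5!} is precisely \eqref{eqINTmA}; note that $\Phi_{z_0}$ is convex and non-decreasing as required. Proposition~\ref{pr3} then yields
\begin{equation*}
\int\limits_0^{\varepsilon_0}\frac{dr}{q(z_0,r)}\ =\ \infty\ ,
\end{equation*}
where $q(z_0,r)$ is the $L^1$-mean of $Q$ over the circle $S(z_0,r)$. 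But by definition $q(z_0,r)=k^T_{\mu_B}(z_0,r)$, so this is exactly the Lehto condition \eqref{eqLEHTOmA} at $z_0$. Since $z_0\in\overline D$ was arbitrary, \eqref{eqLEHTOmA} holds for all $z_0\in\overline D$, and Theorem~\ref{th7} applies verbatim to produce the regular solution $(u,v)$ of the Dirichlet problem with $u={\rm Re}\,f$, $v={\rm Im}\,f$.

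The two points requiring care are bookkeeping rather than genuine obstacles. The first is matching the normalizations of the circular means: Proposition~\ref{pr3} phrases \eqref{eq3.333A} for $\varepsilon_0\in(0,\,{\rm dist}(z_0,\partial D))$, whereas Theorem~\ref{th7} allows $\varepsilon_0=\varepsilon(z_0)>0$ small, so I would simply shrink $U_{z_0}$ if necessary to lie inside the ball on which \eqref{eqINTEGRALmA} is assumed. The second is the hypothesis $K_{\mu_B}\in L^1(D)$, which Theorem~\ref{th7} requires for the boundary extension but which is \emph{not} stated in Theorem~\ref{th8}; here I would note that the local Orlicz integrability \eqref{eqINTEGRALmA} taken at every point of the compact set $\overline D$, combined with $\Phi_{z_0}(t)\geq t$ for large $t$ (which follows from convexity together with \eqref{eqINTmA} forcing super-linear growth, cf.\ the scale in Remark~\ref{rem5}), yields $K^T_{\mu_B}(\cdot,z_0)\in L^1_{\rm loc}$ and hence, via the two-sided bound \eqref{eqConnect}, local integrability of $K_{\mu_B}$; a finite subcover of $\overline D$ then promotes this to $K_{\mu_B}\in L^1(D)$. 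With these normalizations reconciled, the proof is complete, so I would simply write that the conclusion follows from Proposition~\ref{pr3} and Theorem~\ref{th7} and close with \QED.
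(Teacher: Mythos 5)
Your core reduction is exactly the paper's intended argument: the paper derives Theorem \ref{th8} from Lemma \ref{lem3} ``arguing similarly to Section 3'', i.e.\ precisely by combining Proposition \ref{pr3} (applied at each $z_0\in\overline D$ with $Q:=K^T_{\mu_B}(\cdot,z_0)$ and $\Phi:=\Phi_{z_0}$ on $U_{z_0}$) with the Lehto-type Theorem \ref{th7}, just as Theorem \ref{th4} was obtained by combining Proposition \ref{pr3} with Theorem \ref{th3}. One bookkeeping remark: your claim that $q(z_0,r)=k^T_{\mu_B}(z_0,r)$ ``by definition'' is not literally correct --- in Proposition \ref{pr3} $q(z_0,r)$ is the $L^1$-norm of $Q$ over the circle, whereas in Theorem \ref{th7} $k^T_{\mu_B}(z_0,r)$ is the integral \emph{mean}, so $q(z_0,r)=2\pi r\,k^T_{\mu_B}(z_0,r)$; this factor $2\pi r$ is exactly what turns the conclusion (\ref{eq3.333A}) into the condition (\ref{eqLEHTOmA}), which carries the extra $r$ in the denominator.

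The genuine problem is your treatment of the hypothesis $K_{\mu_B}\in L^1(D)$. You correctly notice that Theorem \ref{th7} (and Lemma \ref{lem3}, on which the whole section rests) requires it while Theorem \ref{th8} does not state it, but your proposed derivation of it is wrong: the bound (\ref{eqConnect}) reads $K^{-1}_{\mu}(z)\leqslant K^T_{\mu}(z,z_0)\leqslant K_{\mu}(z)$, so it bounds $K^T$ \emph{by} $K_{\mu}$ and gives no upper control of $K_{\mu}$ in terms of $K^T$. Integrability of $K^T_{\mu_B}(\cdot,z_0)$ therefore does not imply local integrability of $K_{\mu_B}$. Concretely, for $\mu(z)=t(|z-z_0|)\,\overline{(z-z_0)}/(z-z_0)$ one has $K^T_{\mu}(z,z_0)=\frac{1-t}{1+t}\leqslant 1$, while $K_{\mu}(z)=\frac{1+t}{1-t}$ becomes non-integrable near $z_0$ if $t\to 1$ fast enough; so the pointwise implication you invoke at each fixed $z_0$ fails. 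Whether the full family of conditions (\ref{eqINTEGRALmA}) taken over all $z_0\in\overline D$ forces $K_{\mu_B}\in L^1(D)$ is a much more delicate question, and neither you nor the paper addresses it. The honest repair --- and evidently the paper's intent, since Theorems \ref{th5}--\ref{th7} all carry this hypothesis --- is to read $K_{\mu_B}\in L^1(D)$ as a standing assumption of Theorem \ref{th8} as well; with it, your reduction via Proposition \ref{pr3} and Theorem \ref{th7} is complete.
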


\begin{corollary}\label{cor11} Let $D$ be a bounded simply connected domain in $\mathbb
R^2$, $B:D\to\mathbb B^{2\times 2}$ be a measurable function in $D$
and $\mu=\mu_B$ be from (\ref{Mu}). Suppose that
\begin{equation}\label{eqEXPmA}\int\limits_{U_{z_0}}{\rm exp}\,\left[{\alpha(z_0) K^T_{\mu_B}(z,z_0)}\right]\ d{\cal L}(z)<\infty
\qquad\forall\ z_0\in \overline{D}\end{equation} for some
$\alpha(z_0)>0$ and a neighborhood $U_{z_0}$ of the point $z_0$.
Then all conclusions of Theorem \ref{th8} hold. \end{corollary}

\begin{corollary}\label{cor12} Let $D$ be a bounded simply connected domain in $\mathbb
R^2$, $B:D\to\mathbb B^{2\times 2}$ be a measurable function in $D$
and $\mu=\mu_B$ be from (\ref{Mu}). Suppose that
\begin{equation}\label{eqINTKmA}\int\limits_{D}\Phi\left(K_{\mu_B}(z)\right)\,d{\cal L}(z)<\infty\end{equation}
for a convex non-decreasing function $\Phi:[0,\infty]\to[0,\infty]$
with
\begin{equation}\label{eqINTFmA}
\int\limits_{\delta}^{\infty}\log\,\Phi(t)\,\frac{dt}{t^2}\ =\
+\infty\end{equation} for some $\delta>0$. Then the conclusions of
Theorem \ref{th8} hold.
\end{corollary}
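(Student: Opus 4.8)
The plan is to obtain this as an immediate specialization of Theorem~\ref{th8}, exploiting the fact that the tangent dilatation is everywhere dominated by the ordinary dilatation. Concretely, I would make the uniform choices $\Phi_{z_0}:=\Phi$ and $\Delta(z_0):=\delta$ for every $z_0\in\overline{D}$. With these choices the divergence requirement (\ref{eqINTmA}) of Theorem~\ref{th8} is word for word the hypothesis (\ref{eqINTFmA}) of the present corollary, so that part needs no verification at all.

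The only substantive step is checking the finiteness condition (\ref{eqINTEGRALmA}) for $\Phi_{z_0}=\Phi$. For this I would invoke the right-hand inequality in (\ref{eqConnect}), namely $K^T_{\mu_B}(z,z_0)\leqslant K_{\mu_B}(z)$ for all $z$ and all $z_0$. Since $\Phi$ is non-decreasing, this gives the pointwise bound $\Phi\left(K^T_{\mu_B}(z,z_0)\right)\leqslant\Phi\left(K_{\mu_B}(z)\right)$. Taking a bounded neighborhood $U_{z_0}$ of $z_0$ and integrating, the contribution from $U_{z_0}\cap D$ is controlled by the global integral,
$$\int\limits_{U_{z_0}\cap D}\Phi\left(K^T_{\mu_B}(z,z_0)\right)\,d{\cal L}(z)\ \leqslant\ \int\limits_{D}\Phi\left(K_{\mu_B}(z)\right)\,d{\cal L}(z)\ <\ \infty\ ,$$
which is exactly the integrability demanded by Theorem~\ref{th8}. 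Having verified both hypotheses of that theorem, its conclusion transfers verbatim, completing the proof.

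There is no genuine obstacle here; the corollary is an elementary consequence of the monotonicity of $\Phi$ together with (\ref{eqConnect}). The single point deserving a word of care is the behavior at boundary points $z_0\in\partial D$, where $U_{z_0}$ protrudes outside $D$: by the convention of Remark~\ref{rem1} the quotient $K^T_{\mu_B}$ is extended by $1$ there, so the extra contribution is $\Phi(1)\cdot|U_{z_0}\setminus D|$. This is finite because $K_{\mu_B}\geqslant 1$ a.e. forces $\Phi(1)\leqslant\Phi(K_{\mu_B}(z))$, whence the hypothesis $\int_D\Phi(K_{\mu_B})\,d{\cal L}<\infty$ already guarantees $\Phi(1)<\infty$; thus the boundary contribution is harmless and (\ref{eqINTEGRALmA}) holds for every $z_0\in\overline{D}$.
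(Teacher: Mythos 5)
Your proof is correct and follows essentially the same route as the paper, which presents Corollary \ref{cor12} as a direct consequence of Theorem \ref{th8}: one takes $\Phi_{z_0}\equiv\Phi$ and $\Delta(z_0)\equiv\delta$, and deduces (\ref{eqINTEGRALmA}) from (\ref{eqINTKmA}) via the pointwise inequality $K^T_{\mu_B}(z,z_0)\leqslant K_{\mu_B}(z)$ of (\ref{eqConnect}), the monotonicity of $\Phi$, and the convention that $K^T_{\mu_B}$ is extended by $1$ outside $D$. Your extra remark that $\Phi(1)<\infty$, so that for boundary points $z_0\in\partial D$ the portion of $U_{z_0}$ lying outside $D$ contributes only the finite amount $\Phi(1)\cdot|U_{z_0}\setminus D|$, is precisely the care needed to make the verification work for all $z_0\in\overline{D}$.
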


\begin{remark}\label{rem12}
By the Stoilow theorem, see e.g. \cite{Sto}, a multi-valued solution
$f=u+iv$ of the Dirichlet problem (\ref{eqDIRICHLET}) in $D$ for the
Beltrami equation (\ref{B}) with $K_{\mu_B}\in L^1_{\rm loc}(D)$ can
be represented in the form $f={\cal A}\circ F$ where $\cal A$ is a
multi-valued analytic function and $F$ is a homeomorphic solution of
(\ref{B}) with $\mu :=\mu_B$ in the class $W_{\rm loc}^{1,1}$. Thus,
by Theorem 5.1 in \cite{RSY3}, see also Theo\-rem 16.1.6 in
\cite{AIM}, the condition (\ref{eqINTFmA}) is not only sufficient
but also necessary to have regular solutions $(u,v)$ of the
Dirichlet problem (\ref{eqDIR}) in $D$ to the generalized
Cauchy-Riemann equation (\ref{BM}) with the integral constraints
(\ref{eqINTKmA}) for all continuous inconstant data
$\varphi:\partial D\to\Bbb{R}$. \end{remark}

\begin{corollary}\label{cor13} Let $D$ be a bounded simply connected domain in $\mathbb
R^2$, $B:D\to\mathbb B^{2\times 2}$ be a measurable function in $D$
and $\mu=\mu_B$ be from (\ref{Mu}). Suppose that, for some
$\alpha>0$,
\begin{equation}\label{eqEXPAmA}\int\limits_{D}{\rm exp}\, {\alpha K_{\mu_B}(Z)}\ d{\cal L}(Z)\ <\
\infty\ .
\end{equation}

Then the generalized Cauchy-Riemann equation (\ref{BM}) has a
regular solution $(u,v)$ of the Dirichlet problem (\ref{eqDIR}) in
$D$ for each continuous inconstant boundary data $\varphi:\partial
D\to{\Bbb R}$ such that $u={\rm Re}\,f$ and $v={\rm Im}\,f$, where
$f$ is a regular solution of the Dirichlet problem
(\ref{eqDIRICHLET}) for the Beltrami equation (\ref{B}) with
$\mu=\mu_B$.
\end{corollary}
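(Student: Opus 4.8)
The plan is to derive Corollary~\ref{cor13} as the special case of Corollary~\ref{cor12} obtained by choosing the exponential function $\Phi(t)={\rm exp}\,{\alpha t}$, exactly in the way that Corollary~\ref{cor6} was obtained from Theorem~\ref{th4}. Thus I would first observe that $\Phi(t):={\rm exp}\,{\alpha t}$ with $\alpha>0$ is a convex and non-decreasing function $\Phi:[0,\infty]\to[0,\infty]$, so that it meets the standing hypotheses of Corollary~\ref{cor12}. With this choice the integral restriction (\ref{eqINTKmA}) becomes precisely the hypothesis (\ref{eqEXPAmA}), namely
\begin{equation*}
\int\limits_{D}\Phi\left(K_{\mu_B}(Z)\right)\,d{\cal L}(Z)\ =\ \int\limits_{D}{\rm exp}\,{\alpha K_{\mu_B}(Z)}\ d{\cal L}(Z)\ <\ \infty\ .
\end{equation*}

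Next I would verify the divergence condition (\ref{eqINTFmA}) for this $\Phi$. Since $\log\,\Phi(t)=\alpha t$, we have
\begin{equation*}
\int\limits_{\delta}^{\infty}\log\,\Phi(t)\,\frac{dt}{t^2}\ =\ \int\limits_{\delta}^{\infty}\alpha t\,\frac{dt}{t^2}\ =\ \alpha\int\limits_{\delta}^{\infty}\frac{dt}{t}\ =\ +\infty
\end{equation*}
for every $\delta>0$, so (\ref{eqINTFmA}) holds trivially. Consequently all hypotheses of Corollary~\ref{cor12} are satisfied, and its conclusion yields the desired regular solution $(u,v)$ of the Dirichlet problem (\ref{eqDIR}) with $u={\rm Re}\,f$ and $v={\rm Im}\,f$, where $f$ solves the associated Dirichlet problem (\ref{eqDIRICHLET}) for the Beltrami equation (\ref{B}) with $\mu=\mu_B$. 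This completes the proof.

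I do not anticipate any genuine obstacle here: the statement is a direct specialization, and the only thing to check is that the exponential satisfies the convexity, monotonicity, and the logarithmic-divergence condition (\ref{eq5!})/(\ref{eqINTFmA}), all of which are immediate. The only point that deserves a word is the harmless verification that the chosen $\Phi$ is admissible as a map into $[0,\infty]$ and non-decreasing, which is clear. Since the real content has already been established in Theorem~\ref{th8} and Corollary~\ref{cor12} (themselves resting on Lemma~\ref{lem3}, Proposition~\ref{pr2}, and the underlying Beltrami theory of \cite{GRSY22} and \cite{RSY3}), the proof is purely a matter of recording the substitution $\Phi(t)={\rm exp}\,{\alpha t}$.
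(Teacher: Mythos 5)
Your proposal is correct and follows exactly the paper's (implicit) route: Corollary~\ref{cor13} is obtained from Corollary~\ref{cor12} by the substitution $\Phi(t)=\exp\alpha t$, just as Corollary~\ref{cor6} was obtained from Theorem~\ref{th4}. The verification that $\log\Phi(t)=\alpha t$ makes the integral in (\ref{eqINTFmA}) diverge is precisely the intended check, so nothing is missing.
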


Finally note that Remark \ref{remH} remains valid for all these
consequences of Lemma \ref{lem1}.

\section{Boundary value problems with measurable data}

In the case of the uniform ellipticity, i.e., in the case of the
matrix va\-lu\-ed measurable coefficients $B$ in (\ref{BM}) with the
complex criterion $\|\mu\|_{\infty}<1$ in (\ref{Mu}), we are able to
obtain the corresponding results to the generalized Cauchy-Riemann
equations with arbitrary measurable boundary data but not only for
continuous boundary data, and not only for the Dirichlet problem but
for boundary value problems of Hilbert, Neumann, Poincare and
Riemann, also.

\medskip

In in this section, we apply as a basis article \cite{GRY+}, where
it was given a certain pre-history of the given problems and proved
some theorems on existence, representation and regularity of
solutions of these boundary value problems whose data were
measurable with respect to logarithmic capacity for the Beltrami
equations (\ref{B}) with $\|\mu\|_{\infty}<1$ in arbitrary Jordan
domains $D$ in $\mathbb C$.

\medskip

Let $D$ be a domain in $\mathbb R^2$ whose boundary consists of a
finite collection of mutually disjoint Jordan curves. A family of
mutually disjoint Jordan arcs $J_{\zeta}:[0,1]\to\overline D$,
$\zeta\in\partial D$, with $J_{\zeta}([0,1))\subset D$ and
$J_{\zeta}(1)=\zeta$, that is continuous in the parameter $\zeta$,
is called a {\bf Bagemihl--Seidel system}, abbr., {\bf  of class}
${\cal{\bf BS}}$, cf. \cite{BS}, p. 740-741.

\medskip

Arguing similarly to the proof of Lemma~\ref{lem1}, again on the
basis of Proposition \ref{pr} and Remark \ref{rem}, we obtain the
following consequences of the corresponding theorems in \cite{GRY+}.

\medskip

First of all, let us start from the consequence  of Theorem 2 in
\cite{GRY+} for the Dirichlet problem to the Cauchy-Riemann
equations.

\begin{theorem}\label{th5.1}  Let $D$ be a bounded domain in $\mathbb
R^2$ whose boundary consists of a finite number of mutually disjoint
Jordan curves, $B:D\to\mathbb B^{2\times 2}$ be a measurable
function in $D$ with $\|\mu\|_{\infty} < 1$, $\mu=\mu_B$ in
(\ref{Mu}), and let a function $\Phi : \partial D \to \mathbb R^2$
be measurable with respect to logarithmic capacity.

Suppose that $\{\gamma_{\zeta}\}_{\zeta \in \partial D}$ is a family
of Jordan arcs of class $\mathcal{BS}$ in $D$. Then the
Cauchy-Riemann equation (\ref{BM}) has a regular solution
$W(Z)=(u(Z),v(Z))$, $Z=(x,y)$, with the Dirichlet boundary condition
\begin{equation}\label{eqDIRC}
\lim\limits_{Z\to\zeta}W(Z)\ =\ \Phi(\zeta)
\end{equation}
along $\gamma_{\zeta}$ for a.e. $\zeta \in
\partial D$ with respect to logarithmic capacity.
\end{theorem}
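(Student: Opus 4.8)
The plan is to reduce the problem entirely to the already-available existence theory for the Dirichlet problem of the Beltrami equation and then to transfer the resulting solution by the pointwise algebraic dictionary of Proposition~\ref{pr} and Remark~\ref{rem}, exactly in the spirit of the proof of Lemma~\ref{lem1}. The point is that under uniform ellipticity $\|\mu\|_{\infty}<1$ the correspondence $B\leftrightarrow\mu$ is an honest measurable correspondence, and the passage $f\leftrightarrow(u,v)$ carries regular solutions to regular solutions and boundary limits to boundary limits.

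First I would pass from the matrix coefficient to the complex one. Since $B(Z)\in\mathbb{B}^{2\times 2}$ for a.e. $Z$ and $\|\mu_B\|_{\infty}<1$, the function $\mu=\mu_B$ defined by (\ref{Mu}) is measurable with $\|\mu\|_{\infty}<1$, so the Beltrami equation (\ref{B}) with this $\mu$ is uniformly elliptic. Identifying $\mathbb R^2$ with $\mathbb C$ through $Z=(x,y)\leftrightarrow z=x+iy$, I would regard the vector datum $\Phi:\partial D\to\mathbb R^2$ as a complex-valued function $\varphi:\partial D\to\mathbb C$ that is again measurable with respect to logarithmic capacity, so that the boundary condition (\ref{eqDIRC}) for $W=(u,v)$ becomes the requirement $\lim_{z\to\zeta}f(z)=\varphi(\zeta)$ along $\gamma_{\zeta}$ for the complex function $f=u+iv$.

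Next I would invoke Theorem 2 in \cite{GRY+}: under exactly these hypotheses the Beltrami equation (\ref{B}) with $\mu=\mu_B$ has a regular solution $f$ whose limit along the prescribed Bagemihl--Seidel arcs $\gamma_{\zeta}$ equals $\varphi(\zeta)$ for a.e. $\zeta\in\partial D$ with respect to logarithmic capacity. Setting $u={\rm Re}\,f$ and $v={\rm Im}\,f$, Proposition~\ref{pr} together with Remark~\ref{rem} shows that $(u,v)$ satisfies (\ref{BM}) with precisely this $B$ at every point where $f$ is differentiable with $|f_z|>|f_{\bar z}|$, hence a.e. in $D$ by uniform ellipticity. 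For the required regularity of $W$, I would argue as in Lemma~\ref{lem1}: $f$ is continuous, the Jacobian of (\ref{J}) equals $J_f=|f_z|^2-|f_{\bar z}|^2>0$ a.e., so $\nabla u\neq 0$ and $\nabla v\neq 0$ a.e.; and, being regular, $f$ is discrete and open via its Stoilow factorization $f=\mathcal{A}\circ g$ into an analytic function and a homeomorphism, cf. Remark~\ref{remH}. Reading these properties and the boundary condition back through $\mathbb C\cong\mathbb R^2$ yields the asserted regular solution $W=(u,v)$ with (\ref{eqDIRC}).

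The main obstacle does not lie in this transfer, which is purely algebraic and formally identical to Lemma~\ref{lem1}; it is entirely concentrated in the cited Theorem 2 of \cite{GRY+}, namely in producing a Beltrami solution that attains merely log-capacity-measurable data along a continuous family of arcs while controlling the exceptional boundary set in terms of logarithmic capacity. Granting that boundary-attainment result, the conversion to a regular solution of the generalized Cauchy--Riemann equation (\ref{BM}) is routine, and I would present it in the compressed form \emph{``arguing as in the proof of Lemma~\ref{lem1} on the basis of Proposition~\ref{pr} and Remark~\ref{rem}.''}
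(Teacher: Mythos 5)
Your proposal is correct and follows essentially the same route as the paper: the authors obtain Theorem \ref{th5.1} precisely as a consequence of Theorem 2 in \cite{GRY+} for the Beltrami equation with $\mu=\mu_B$, transferred to the pair $(u,v)=({\rm Re}\,f,{\rm Im}\,f)$ by arguing as in the proof of Lemma \ref{lem1} on the basis of Proposition \ref{pr} and Remark \ref{rem}. Your additional remarks on the positivity of the Jacobian and the Stoilow factorization correctly fill in the regularity transfer that the paper leaves implicit.
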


Recall that the {\bf classical boundary value problem of Hilbert},
was formulated as follows: To find an analytic function $f(z)$ in a
Jordan domain $D$ in $\mathbb C$ with a rectifiable boundary that
satisfies the condition
\begin{equation}\label{Hilbert}
\lim\limits_{z\to\zeta , z\in D} \ {\rm{Re}}\,
\{\overline{\lambda(\zeta)}\ f(z)\}\ =\ \varphi(\zeta)
\quad\quad\quad
\end{equation}
for all $\zeta\in\partial D$, where the {\bf coefficient} $\lambda$
and the {\bf boundary date} $\varphi$ of the problem were assumed
con\-ti\-nu\-ous\-ly differentiable with respect to the natural
parameter $s$ and $\lambda\ne 0$ everywhere on $\partial D$. The
latter allows to consider that $|\lambda|\equiv 1$ on $\partial D$.
Note that the quantity ${\rm{Re}}\,\{\overline{\lambda}\, f\}$ in
(\ref{Hilbert}) means a projection of $f$ into the direction
$\lambda$ interpreted as vectors in $\mathbb R^2$.

\medskip

The following consequence  of Theorem 4 and Remark 2 in \cite{GRY+}
solves the Hilbert type problem to the Cauchy-Riemann equations
(\ref{BM}).

\begin{theorem}\label{th5.2} Let $D$ be a bounded domain in $\mathbb
R^2$ whose boundary consists of a finite number of mutually disjoint
Jordan curves, $B:D\to\mathbb B^{2\times 2}$ be a measurable
function in $D$ with $\|\mu\|_{\infty} < 1$, $\mu=\mu_B$ in
(\ref{Mu}), and let functions $\varphi : \partial D \to \mathbb R$
and $\Lambda : \partial D \to \mathbb R^2$, $|\Lambda |\equiv 1$, be
measurable with respect to logarithmic capacity.

Suppose that $\{\gamma_{\zeta}\}_{\zeta \in \partial D}$ is a family
of Jordan arcs of class $\mathcal{BS}$ in $D$. Then the
Cauchy-Riemann equation (\ref{BM}) has a regular solution
$W(Z)=(u(Z),v(Z))$, $Z=(x,y)$, with the Hilbert boundary condition
\begin{equation}\label{eqHC}
\lim\limits_{Z\to\zeta}\,\langle\, \Lambda(Z)\,, \,W(Z)\,\rangle\ =\
\varphi(\zeta)
\end{equation}
along $\gamma_{\zeta}$ for a.e. $\zeta \in
\partial D$ with respect to logarithmic capacity.

Furthermore, the space of all such solutions has the infinite
dimension for each such prescribed $B$, $\Lambda$, $\varphi$ and
$\{\gamma_{\zeta}\}_{\zeta \in \partial D}$.
\end{theorem}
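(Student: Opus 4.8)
The plan is to reduce the Hilbert problem for the generalized Cauchy--Riemann equation (\ref{BM}) to the corresponding Hilbert problem for the Beltrami equation (\ref{B}), exactly as was done for the Dirichlet problem in Lemma~\ref{lem3} and Theorem~\ref{th5.1}. The crucial translation device is Proposition~\ref{pr} together with Remark~\ref{rem}: under the uniform ellipticity hypothesis $\|\mu\|_{\infty}<1$ with $\mu=\mu_B$ given by (\ref{Mu}), the matrix $B\in\mathbb B^{2\times 2}$ corresponds pointwise and a.e.\ to the complex coefficient $\mu_B$, and a pair $(u,v)$ solves (\ref{BM}) a.e.\ if and only if $f:=u+iv$ solves (\ref{B}) a.e.\ with $\mu=\mu_B$. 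Thus I would first invoke Theorem~4 (and Remark~2) of \cite{GRY+}, which furnishes, for the Beltrami equation (\ref{B}) with $\|\mu\|_\infty<1$ in a domain bounded by finitely many disjoint Jordan curves, a regular solution $f$ satisfying the classical Hilbert boundary condition
\begin{equation}\label{eqHBeltrami}
\lim\limits_{z\to\zeta}\ {\rm Re}\,\{\overline{\lambda(\zeta)}\, f(z)\}\ =\ \varphi(\zeta)
\end{equation}
along the prescribed arcs $\gamma_\zeta$ for a.e.\ $\zeta\in\partial D$ with respect to logarithmic capacity.

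The second step is purely a matter of identifying the two formulations of the boundary condition. Writing $\lambda=\lambda_1+i\lambda_2$ and interpreting it as the unit vector $\Lambda=(\lambda_1,\lambda_2)\in\mathbb R^2$, one has the elementary identity
\begin{equation}\label{eqProjection}
{\rm Re}\,\{\overline{\lambda}\, f\}\ =\ \lambda_1 u\ +\ \lambda_2 v\ =\ \langle\,\Lambda\,,\,W\,\rangle\ ,\qquad W=(u,v)\ ,
\end{equation}
already noted in the text after (\ref{Hilbert}) as the projection of $f$ onto the direction $\lambda$. Hence the scalar condition (\ref{eqHC}) with data $\varphi$ and coefficient $\Lambda$, $|\Lambda|\equiv1$, is literally the Hilbert condition (\ref{eqHBeltrami}) for $f$ with the same $\varphi$ and with $\lambda$ the complex number corresponding to $\Lambda$. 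Since $\varphi$ and $\Lambda$ are measurable with respect to logarithmic capacity by hypothesis, the data admissible for the Beltrami version in \cite{GRY+} are exactly matched, and the solution $f$ produced there yields $W=({\rm Re}\,f,{\rm Im}\,f)$ satisfying (\ref{eqHC}). Regularity of $(u,v)$ in the sense of the section transfers because, as in the proof of Lemma~\ref{lem1}, the Jacobian of $f$ viewed as a map $\mathbb R^2\to\mathbb R^2$ vanishes at a point precisely when $\nabla u$ or $\nabla v$ does, so $J_f>0$ a.e.\ forces $\nabla u\neq0$ and $\nabla v\neq0$ a.e., and continuity, discreteness and openness of the correspondence $(x,y)\mapsto(u,v)$ are inherited from $f$ via the Stoilow factorization recalled before (\ref{eqDIRICHLET}).

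The infinite-dimensionality of the solution space I would likewise import from \cite{GRY+}, since the translation $f\mapsto({\rm Re}\,f,{\rm Im}\,f)$ is a real-linear bijection between the solution set of the Beltrami Hilbert problem and the solution set of (\ref{eqHC}); an infinite-dimensional space of the former maps to an infinite-dimensional space of the latter. The point I expect to require the most care is verifying that the correspondence $B\leftrightarrow\mu_B$ is genuinely admissible at the level of measurability and ellipticity across the \emph{whole} domain including boundary behaviour: one must check that $B$ measurable with $\|\mu_B\|_\infty<1$ translates to $\mu=\mu_B$ measurable with $\|\mu\|_\infty<1$, so that \cite{GRY+} applies verbatim, and that the arcs $\gamma_\zeta$ of class $\mathcal{BS}$ and the capacity-a.e.\ convergence are preserved untouched under a transformation that acts only on the interior fields and not on the geometry of $\partial D$. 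Granting these, the theorem is a direct corollary of \cite{GRY+} through Proposition~\ref{pr}, Remark~\ref{rem} and the identity (\ref{eqProjection}).
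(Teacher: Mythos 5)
Your proposal is correct and is essentially the paper's own argument: the paper obtains Theorem \ref{th5.2} precisely as a consequence of Theorem 4 and Remark 2 in \cite{GRY+} for the Beltrami equation with $\mu=\mu_B$, translated to (\ref{BM}) via Proposition \ref{pr} and Remark \ref{rem}, ``arguing similarly to the proof of Lemma \ref{lem1}.'' The details you spell out --- the identity ${\rm Re}\,\{\overline{\lambda}\,f\}=\langle\,\Lambda\,,\,W\,\rangle$ identifying the two forms of the Hilbert condition, the Jacobian argument transferring regularity, and the real-linear bijection $f\mapsto({\rm Re}\,f,{\rm Im}\,f)$ carrying the infinite-dimensional solution space --- are exactly the steps the paper leaves implicit.
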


Again, on the basis of Theorem 7 in \cite{GRY+}, we also obtain the
fol\-lo\-wing consequence to the genegalized Cauchy-Riemann
equations (\ref{BM}) on the known {\bf Poincare boundary value
problem on directional derivatives}
\begin{equation}\label{4.5}
\frac{\partial W}{\partial {\cal N}}\ :=\ \lim_{t\to 0}\
\frac{W(Z+t\cdot\,{\cal N})-W(Z)}{t}
\end{equation} along the corresponding directing vectors ${\cal N} = {\cal N}(\zeta)$, $\zeta\in\partial
D$.

\begin{theorem}\label{th5.3} Let $D$ be a Jordan domain in $\mathbb
R^2$, $B:D\to\mathbb B^{2\times 2}$ be a matrix function of the
Holder class $C^{\alpha}$, $\alpha \in (0,1)$, with
$\|\mu\|_{\infty} < 1$, $\mu=\mu_B$ in (\ref{Mu}), and let functions
$\Phi : \partial D \to \mathbb R^2$ and ${\cal N} :
\partial D \to \mathbb R^2$, $|{\cal N} |\equiv 1$, be measurable
with respect to logarithmic capacity.

Suppose that $\{\gamma_{\zeta}\}_{\zeta \in \partial D}$ is a family
of Jordan arcs of class $\mathcal{BS}$ in $D$. Then the
Cauchy-Riemann equation (\ref{BM}) has a regular smooth solution
$W(Z)=(u(Z),v(Z))$, $Z=(x,y)$, of the class $C^{1+\alpha}$ with the
Poincare boundary condition
\begin{equation}\label{eqPC}
\lim_{Z \to \zeta}\frac{\partial W(Z)}{\partial {\cal N(\zeta)}}\ =\
\Phi(\zeta)
\end{equation}
along $\gamma_{\zeta}$ for a.e. $\zeta \in
\partial D$ with respect to logarithmic capacity.
\end{theorem}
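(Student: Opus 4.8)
The plan is to reduce Theorem~\ref{th5.3} to the corresponding statement for the Beltrami equation, namely Theorem~7 in \cite{GRY+}, in exactly the way Theorems~\ref{th5.1} and~\ref{th5.2} were obtained from Theorems~2 and~4 there. First I would translate the hypotheses on the matrix coefficient $B$ into hypotheses on the complex coefficient $\mu=\mu_B$ furnished by formula (\ref{Mu}). The key observation is that, for $B\in\mathbb B^{2\times 2}$, the denominator $b_{12}-b_{21}-2$ in (\ref{Mu}) never vanishes: by Remark~\ref{rem} the ellipticity condition forces $b_{21}>b_{12}$, whence $b_{12}-b_{21}-2<-2<0$. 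Thus $\mu_B$ is a rational function of the entries $b_{11},b_{12},b_{21}$ with denominator bounded away from zero, so $B\in C^{\alpha}(D)$ implies $\mu_B\in C^{\alpha}(D)$, while the uniform ellipticity hypothesis gives $\|\mu_B\|_{\infty}<1$.

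Next I would apply Theorem~7 in \cite{GRY+} to the Beltrami equation (\ref{B}) with this coefficient $\mu=\mu_B$, the boundary data $\Phi$, the directing field $\mathcal N$, and the Bagemihl--Seidel system $\{\gamma_{\zeta}\}$. Since $\mu_B$ is uniformly elliptic and of class $C^{\alpha}$, that theorem produces a regular solution $f=u+iv$ of (\ref{B}) which is smooth of class $C^{1+\alpha}$ in $D$ (by the Schauder-type regularity for uniformly elliptic equations with H\"older coefficients that underlies the cited result) and satisfies the Poincar\'e boundary condition along $\gamma_{\zeta}$ for almost every $\zeta\in\partial D$ with respect to logarithmic capacity.

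Then I would invoke Proposition~\ref{pr}, together with Remark~\ref{rem}, to pass from $f$ to the pair $W=(u,v)$ with $u={\rm Re}\,f$ and $v={\rm Im}\,f$. Because $f$ is a regular $C^{1+\alpha}$ solution of (\ref{B}) whose coefficient $\mu_B$ corresponds to the prescribed $B\in\mathbb B^{2\times 2}$, the pair $(u,v)$ satisfies the generalized Cauchy-Riemann equation (\ref{BM}) with the matrix coefficient $B$, and it inherits both the $C^{1+\alpha}$ regularity and the non-degeneracy $\nabla u\neq 0$, $\nabla v\neq 0$ componentwise. Finally, under the natural isomorphism $\mathbb R^2\leftrightarrow\mathbb C$ the directional derivative $\partial W/\partial\mathcal N$ in (\ref{4.5}) is precisely the real/imaginary decomposition of $\partial f/\partial\mathcal N$, so the Poincar\'e condition (\ref{eqPC}) follows immediately from the boundary condition already established for $f$.

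All these steps are routine given the earlier machinery; I expect the only point requiring genuine care to be the regularity transfer $B\in C^{\alpha}\Rightarrow\mu_B\in C^{\alpha}$ and, correspondingly, the verification that Theorem~7 of \cite{GRY+} indeed delivers a $C^{1+\alpha}$ solution rather than merely a $W^{1,1}_{\rm loc}$ one. This is exactly what forces the H\"older hypothesis on $B$ here, in contrast to the merely measurable coefficients permitted in Theorems~\ref{th5.1} and~\ref{th5.2}, and it is the reason the conclusion upgrades from a regular solution to a regular \emph{smooth} solution of class $C^{1+\alpha}$.
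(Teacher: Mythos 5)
Your proposal is correct and follows essentially the same route as the paper: Theorem \ref{th5.3} is obtained there precisely by applying Theorem 7 of \cite{GRY+} to the Beltrami equation (\ref{B}) with $\mu=\mu_B$ and then transferring the solution $f=u+iv$ to the pair $W=(u,v)$ via Proposition \ref{pr} and Remark \ref{rem}, exactly as for Theorems \ref{th5.1} and \ref{th5.2}. The only difference is that you make explicit the details the paper leaves implicit, namely the H\"older transfer $B\in C^{\alpha}\Rightarrow\mu_B\in C^{\alpha}$ (using that the denominator in (\ref{Mu}) satisfies $b_{12}-b_{21}-2<-2$ by Remark \ref{rem}) and the componentwise identification of $\partial W/\partial{\cal N}$ with the directional derivative of $f$, both of which are sound.
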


\begin{remark}\label{remN}
We are able to say more in the case $\langle n , {\cal N}\rangle
> 0$ where $n = n(\zeta)$ is the unit interior normal with a tangent
to $\partial D$ at a point $\zeta \in \partial D$. In view of
(\ref{eqPC}), since the limit $\Phi(\zeta)$ is finite, there is a
finite limit $W(\zeta)$ of $W(Z)$ as $Z \to \zeta$ in $D$ along the
straight line passing through the point $\zeta$ and being parallel
to the vector ${\cal N}(\zeta)$ because along this line, for $Z$ and
$Z_0$ that are close enough to $\zeta$,
\begin{equation}
W(Z)\ =\ W(z_0)\ -\ \int_0^1 \frac{\partial W}{\partial {\cal N}}
\left( Z_0\, +\, \tau\cdot (Z - Z_0) \right) d\tau\ .
\end{equation}
Thus, at each point with the condition (\ref{eqPC}), there is the
derivative
\begin{equation}
\frac{\partial W}{\partial {\cal N}} (\zeta)\ :=\ \lim_{t \to 0}\
\frac{W(\zeta\ +\ t\cdot {\cal N}) - W(\zeta)}{t}\ =\ \Phi(\zeta)\ .
\end{equation}
\end{remark}

Thus, the next result, concerning to the Neumann problem for the
generalized Cauchy-Riemann equations (\ref{BM}), follows from
Theorem \ref{th5.3}.


\begin{corollary}\label{corN}
Let $\mathbb D$ be the unit disk in $\mathbb R^2$ centered at the
origin, ${\rm B}:\mathbb D\to\mathbb B^{2\times 2}$ be a matrix
function of the Holder class $C^{\alpha}$, $\alpha \in (0,1)$, with
$\|\mu\|_{\infty} < 1$, $\mu=\mu_B$ in (\ref{Mu}), and let a
function $\Phi : \partial \mathbb D \to \mathbb R^2$ be measurable
with respect to logarithmic capacity.

\medskip

Then the genegalized Cauchy-Riemann equation (\ref{BM}) has a
regular smooth solution $W(Z)=(u(Z),v(Z))$, $Z=(x,y)$, of the class
$C^{1+\alpha}$ in $\mathbb D$ such that a.e. on $\partial \mathbb D$
there exist:

\medskip

1) the finite limit along the inner normal $n=n(\zeta)$ to $\partial
\mathbb D$ at $\zeta$
\begin{equation}\label{4.10}
W(\zeta)\ :=\ \lim_{Z\to\zeta}\ W(Z)
\end{equation}

2) the normal derivative
\begin{equation}\label{4.11}
\frac{\partial W}{\partial n}\, (\zeta)\ :=\ \lim_{t\to0}\
\frac{W(\zeta+t\cdot n)\ -\ W(\zeta)}{t}\ =\ \Phi(\zeta)
\end{equation}

3) the radial limit \begin{equation}\label{4.12}
 \lim_{z\to\zeta}\ \frac{\partial W}{\partial
n}\, (Z)\ =\ \frac{\partial W}{\partial n}\, (\zeta)\
.\end{equation}
\end{corollary}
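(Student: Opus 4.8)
The plan is to deduce Corollary~\ref{corN} as a direct specialization of Theorem~\ref{th5.3} to the case $D=\mathbb D$ with the prescribed direction field chosen to be the inner normal. First I would invoke Theorem~\ref{th5.3} with the unit disk as the Jordan domain $D$, the given coefficient $B\in C^{\alpha}$ satisfying $\|\mu\|_{\infty}<1$, the datum $\Phi$, and the distinguished vector field ${\cal N}(\zeta):=n(\zeta)$, the unit inner normal to $\partial\mathbb D$ at $\zeta$. Since $\partial\mathbb D$ is smooth, $n(\zeta)$ is well-defined, has $|n|\equiv 1$, and is measurable (indeed continuous) with respect to logarithmic capacity, so the hypotheses of Theorem~\ref{th5.3} are met. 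As the family of class ${\cal BS}$ one may take the radial segments $\gamma_{\zeta}(t)=t\zeta$, $t\in[0,1]$, which is plainly a Bagemihl--Seidel system. Theorem~\ref{th5.3} then furnishes a regular $C^{1+\alpha}$ solution $W=(u,v)$ of \eqref{BM} for which the Poincare condition \eqref{eqPC} holds a.e. on $\partial\mathbb D$ with respect to logarithmic capacity; this is precisely item~2), equation~\eqref{4.11}.

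Next I would derive items 1) and 3) from the smoothness of $W$ together with the finiteness of the limit in \eqref{eqPC}, exactly as carried out in Remark~\ref{remN}. The key point is that for the inner normal direction one has $\langle n,{\cal N}\rangle = \langle n,n\rangle = 1 > 0$, so the situation described in Remark~\ref{remN} applies verbatim. Integrating $\partial W/\partial{\cal N}$ along the normal segment through $\zeta$ and using the integral representation displayed in that remark shows that $W(Z)$ has a finite limit $W(\zeta)$ as $Z\to\zeta$ along the inner normal, giving \eqref{4.10}, and that the normal derivative extends to $\zeta$ with limit value $\Phi(\zeta)$, which is \eqref{4.12}. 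Because $W\in C^{1+\alpha}$ up to the relevant boundary points, the directional derivative $\partial W/\partial n$ is continuous where defined, so the radial limit in \eqref{4.12} coincides with the boundary value $\partial W/\partial n(\zeta)$ from \eqref{4.11}.

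The only genuine point requiring care is the matching of geometric conventions: for the disk centered at the origin the inner normal at $\zeta\in\partial\mathbb D$ is $n(\zeta)=-\zeta$, which is parallel to the radial segment used as $\gamma_{\zeta}$, so the limit ``along $\gamma_{\zeta}$'' in Theorem~\ref{th5.3} is indeed the limit along the inner normal demanded in the Corollary. Thus the main obstacle is not analytic but merely verifying this compatibility of the arc family with the normal direction; once it is settled, the three assertions follow, with 2) being literally the conclusion of Theorem~\ref{th5.3} and 1), 3) following from Remark~\ref{remN}. I would close by noting that the exceptional set of capacity zero on which the conclusions may fail is inherited unchanged from Theorem~\ref{th5.3}.
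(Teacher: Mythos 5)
Your proposal is correct and follows essentially the same route as the paper: the paper obtains Corollary \ref{corN} precisely by specializing Theorem \ref{th5.3} to the unit disk with ${\cal N}(\zeta)=n(\zeta)$ and the radial arcs as the Bagemihl--Seidel family, and then invoking the integration argument of Remark \ref{remN} (applicable since $\langle n,{\cal N}\rangle\equiv 1>0$) for the boundary values. One small inaccuracy worth fixing: the conclusion (\ref{eqPC}) of Theorem \ref{th5.3} is the limit of the derivative function along the radius, i.e.\ the content behind item 3), not item 2) as you state --- the difference quotient in (\ref{4.11}) only makes sense after item 1) provides $W(\zeta)$, and both 1) and 2) are exactly what Remark \ref{remN} supplies, after which 3) follows by combining (\ref{eqPC}) with 2); your labels are swapped, but all the needed ingredients appear in your argument.
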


The classical setting of the {\bf Riemann boundary value problem} in
a smooth Jordan domain $D$ of the complex plane $\mathbb{C}$ is to
find analytic functions ${\cal A}^+: D\to\mathbb C$ and ${\cal
A}^-:\mathbb C\setminus \overline{D}\to\mathbb C$ that admit
continuous extensions to $\partial D$ and satisfy the boundary
condition
\begin{equation}\label{eqRIEMANN} {\cal A}^+(\zeta)\ =\
a(\zeta)\cdot {\cal A}^-(\zeta)\ +\ b(\zeta)\end{equation} for all
$\zeta\in\partial D$ with its prescribed H\"older continuous
coefficients $a:
\partial D\to\mathbb C$ and $b: \partial D\to\mathbb C$.
Recall also that the {\bf Riemann boundary value problem with shift}
in $D$ is to find analytic functions ${\cal A}^+: D\to\mathbb C$ and
${\cal A}^-:\mathbb C\setminus \overline{D}\to\mathbb C$ such that
\begin{equation}\label{eqSHIFT} {\cal A}^+(\alpha(\zeta))\ =\ a(\zeta)\cdot
{\cal A}^-(\zeta)\ +\ b(\zeta)
\end{equation}
for all $\zeta\in\partial D$, where $\alpha :\partial D\to\partial
D$ was a one-to-one sense preserving correspondence having the
non-vanishing H\"older continuous derivative with respect to the
natural parameter on $\partial D$. The function $\alpha$ is called a
{\bf shift function}. The special case $a\equiv 1$ gives the
so--called {\bf jump problem} and then $b\equiv 0$ the {\bf problem
on gluing} of analytic functions.

\medskip

Now, on the basis of Theorem 9 in \cite{GRY+}, we obtain the
fol\-lo\-wing consequence on the Riemann boundary value problem to
the generalized Cauchy-Riemann equations (\ref{BM}).

\begin{theorem}\label{th5.4} Let $D$ be a domain in $\mathbb
R^2$ whose boundary consists of a finite number of mutually disjoint
Jordan curves, $B:\mathbb R^2\to\mathbb B^{2\times 2}$ be a
measurable function in $\mathbb R^2$ with $\|\mu\|_{\infty} < 1$,
$\mu=\mu_B$ is from (\ref{Mu}), and let functions $a :
\partial D \to \mathbb M^{2\times 2}$ and $b :
\partial D \to \mathbb R^2$ be measurable with respect to
logarithmic capacity. Suppose also that $\{\gamma_{\zeta}^+\}_{\zeta
\in \partial D}$ and $\{\gamma_{\zeta}^-\}_{\zeta \in \partial D}$
are families of Jordan arcs of class $\mathcal{BS}$ in $D$ and
$\mathbb{R}^2 \setminus \overline{D}$, correspondingly.

\medskip

Then the genegalized Cauchy-Riemann equation (\ref{BM}) has its
regular solutions
\newline $W^{\pm}(Z)=(u^{\pm}(Z),v^{\pm}(Z))$, $Z=(x,y)$,  in $D$ and
$\mathbb{R}^2 \setminus \overline{D}$, correspondingly, with the
Riemann boundary condition
\begin{equation}\label{eqR} W^+(\zeta)\ =\
a(\zeta)\cdot W^-(\zeta)\ +\ b(\zeta)\end{equation} for a.e. $\zeta
\in \partial D$ with respect to logarithmic capacity, where
$W^{\pm}(\zeta)$ are limits of $W^{\pm}(Z)$ as $Z \to \zeta$ along
$\gamma_{\zeta}^{\pm}$, correspondingly, and vectors in $\mathbb
R^2$ are interpreted as vector-columns.

\medskip

Furthermore, the space of all such solutions has the infinite
dimension for each such prescribed coefficients $B$, $a$, $b$ and
collections $\{\gamma_{\zeta}^{\pm}\}_{\zeta \in \partial D}$.
\end{theorem}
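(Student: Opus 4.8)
The plan is to reduce the problem to the corresponding Riemann boundary value problem for the Beltrami equation (\ref{B}) and to transfer the result back through the pointwise algebraic dictionary of Section 2. First I would set $\mu := \mu_B$ by the formula (\ref{Mu}); since $B:\mathbb R^2\to\mathbb B^{2\times 2}$ is measurable with $\|\mu\|_{\infty}<1$, the equation (\ref{B}) with this $\mu$ is uniformly elliptic on the whole plane, which is precisely the setting of Theorem 9 in \cite{GRY+}. Under the identification $Z=(x,y)\leftrightarrow z=x+i\,y$ the domain $D$, its boundary curves, the arc families $\{\gamma_{\zeta}^{\pm}\}$ and the logarithmic capacity are left literally unchanged, so no measure-theoretic bookkeeping is required: the only thing that changes is the interpretation of a complex function $f=u+iv$ as the vector field $W=(u,v)$.

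Next I would express the boundary relation in complex terms. Writing $W^{\pm}\leftrightarrow f^{\pm}=u^{\pm}+i\,v^{\pm}$, the condition (\ref{eqR}) becomes a real-affine relation between the boundary traces $f^{\pm}$; since Theorem 9 in \cite{GRY+} is already formulated for the $\mathbb R^2$-valued traces $W=({\rm Re}\,f,{\rm Im}\,f)$ with a matrix coefficient $a$ and a vector $b$, the relation (\ref{eqR}) is exactly the form solved there. Applying that theorem with $\mu=\mu_B$, I obtain regular solutions $f^{+}$ in $D$ and $f^{-}$ in $\mathbb R^2\setminus\overline D$ of (\ref{B}) whose limits along $\gamma_{\zeta}^{\pm}$ satisfy (\ref{eqR}) for a.e. $\zeta\in\partial D$ with respect to logarithmic capacity, and whose solution space is of infinite dimension.

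Then I would invoke Proposition \ref{pr} and Remark \ref{rem}: the pairs $(u^{\pm},v^{\pm})=({\rm Re}\,f^{\pm},{\rm Im}\,f^{\pm})$ satisfy the generalized Cauchy-Riemann equation (\ref{BM}) with the prescribed $B$ a.e. in the respective domains. Regularity is argued exactly as in the proof of Lemma \ref{lem1}: the determinant of the Jacobian matrix (\ref{J}) equals $J_f=|f_z|^2-|f_{\bar z}|^2>0$ a.e. for a regular Beltrami solution, which forces $\nabla u\neq 0$ and $\nabla v\neq 0$ a.e. and, together with the continuity, discreteness and openness of $f^{\pm}$, yields the required regularity of $W^{\pm}$. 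Finally, the infinite dimension of the solution space transfers verbatim, because $f\mapsto({\rm Re}\,f,{\rm Im}\,f)$ is a real-linear bijection between the solution set of the Beltrami Riemann problem and that of (\ref{BM}), carrying the affine structure of the boundary conditions to itself.

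The step I expect to be the main obstacle is the faithful matching of the boundary conditions: one must make sure that the matrix-valued coefficient $a\in\mathbb M^{2\times 2}$ in (\ref{eqR}) — which in the complex picture corresponds to a general real-linear map $f^{-}\mapsto \alpha f^{-}+\beta\,\overline{f^{-}}$ rather than to multiplication by a single complex number — is genuinely covered by Theorem 9 in \cite{GRY+}, and that the arc limits are preserved under the identification $\mathbb C\cong\mathbb R^2$. All of the analytic depth (handling of data merely measurable with respect to logarithmic capacity, and the infinite-dimensionality) resides in that cited theorem, which is the real engine; within the reduction itself everything else is the routine dictionary of Section 2.
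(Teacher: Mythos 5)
Your proposal follows essentially the same route as the paper: the paper obtains Theorem \ref{th5.4} precisely as a consequence of Theorem 9 in \cite{GRY+} applied to the Beltrami equation with $\mu=\mu_B$, transferring the solutions back via Proposition \ref{pr} and Remark \ref{rem}, with regularity argued as in the proof of Lemma \ref{lem1}. Your additional attention to whether the matrix coefficient $a$ (i.e., a general real-linear boundary relation) is covered by the cited theorem is a reasonable check, but it is exactly the setting of that theorem, so nothing further is needed.
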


\begin{remark}\label{remS}
The similar statement to the generalized Cauchy-Riemann equations
(\ref{BM}) can be formulated on the basis of Lemma 4 in \cite{GRY+}
for the Riemann boundary problem with homeomorphic shifts $\alpha
:\partial D\to\partial D$ keeping components of $\partial D$ such
that $\alpha$ and $\alpha^{-1}$ have the (N)-property by Lusin with
respect to logarithmic capacity.

\medskip

Moreover, some investigations on analytic functions were devoted
also to the nonlinear Riemann problems. As a consequence of Remark 8
in \cite{GRY+}, such a Riemann's problem with a nonlinear boundary
condition
\begin{equation}\label{eqNON}
W^+(\zeta)\ =\ \Phi(\,\zeta\, ,\, W^-(\zeta)\,)\ \ \ \ \ \mbox{for
a.e. $\zeta \in \partial D$}
\end{equation}
is solved to the generalized Cauchy-Riemann equations (\ref{BM})
under the hypotheses of Theorem \ref{th5.4} if the function
$\Phi:\partial D\times\mathbb R^2\to\mathbb R^2$ satisfies the {\bf
Ca\-ra\-theo\-do\-ry conditions} with respect to logarithmic
capacity, i.e., if $\Phi(\zeta, w)$ is continuous in the variable
$w\in\mathbb R^2$ for a.e. $\zeta\in\partial D$ with respect to the
logarithmic capacity and it is measurable with respect to
logarithmic capacity in the variable $\zeta\in\partial D$ for each
$w\in\mathbb R^2$. Furthermore, the space of all regular solutions
of the problem has the infinite dimension.
\end{remark}

Finally, in order to demonstrate the potentiality of our approach,
we give here also one more consequence for other nonlinear boundary
value problem of a mixed Poincare-Riemann type to the generalized
Cauchy-Riemann equations (\ref{BM}) from Theorem 10 in \cite{GRY+}.

\begin{theorem}\label{th5.5} Let $D$ be a domain in $\mathbb
R^2$ whose boundary consists of a finite number of mutually disjoint
Jordan curves, $B:\mathbb R^2\to\mathbb B^{2\times 2}$ be a
measurable function in $\mathbb R^2$ with $\|\mu\|_{\infty} < 1$,
$\mu \in C^{\alpha}(\mathbb{R}^2 \setminus \overline{D})$,
$\mu=\mu_B$ in (\ref{Mu}), let $\Phi:\partial D\times\mathbb
R^2\to\mathbb R^2$ satisfy the Ca\-ra\-theo\-do\-ry conditions and a
function ${\cal N} : \partial D \to \mathbb{R}^2$, $|{\cal
N}(\zeta)| \equiv 1$, be measurable with respect to logarithmic
capacity. Suppose also that $\{\gamma_{\zeta}^+\}_{\zeta \in\partial
D}$ and $\{\gamma_{\zeta}^-\}_{\zeta \in \partial D}$ are families
of Jordan arcs of class $\mathcal{BS}$ in $D$ and $\mathbb{R}^2
\setminus \overline{D}$, correspondingly.

\medskip

Then the genegalized Cauchy-Riemann equation (\ref{BM}) has its
regular solutions
\newline $W^{\pm}(Z)=(u^{\pm}(Z),v^{\pm}(Z))$, $Z=(x,y)$,  in $D$ and
$\mathbb{R}^2 \setminus \overline{D}$, correspondingly, $W^-\in
C^{1+\alpha}(\mathbb{R}^2 \setminus \overline{D})$, with the
Poincare-Riemann boundary condition
\begin{equation}\label{eqPR}
W^+(\zeta)\ =\ \Phi\left(\zeta, \left[\frac{\partial W}{\partial
{\cal N}}\right]^-(\zeta)\right)
\end{equation}
for a.e. $\zeta \in \partial D$ with respect to logarithmic
capacity, where $W^+(\zeta)$ and $\left[\frac{\partial W}{\partial
{\cal N}}\right]^-(\zeta)$ denote limits of the functions $W^+(Z)$
and $\frac{\partial W^-}{\partial {\cal N}}(Z)$ as $Z \to \zeta$
along $\gamma_{\zeta}^+$ and $\gamma_{\zeta}^-$, correspondingly.

\medskip

Furthermore, the space of all such solutions has the infinite
dimension for each prescribed collection $B$, $\Phi$, ${\cal N}$ and
$\{\gamma_{\zeta}^{\pm}\}_{\zeta \in \partial D}$.
\end{theorem}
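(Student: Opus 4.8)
The plan is to reduce the statement to its complex-analytic counterpart, Theorem 10 in \cite{GRY+}, by exploiting the pointwise algebraic correspondence between the matrix coefficient $B$ and the complex coefficient $\mu_B$ recorded in Proposition \ref{pr} and Remark \ref{rem}. First I would set $\mu:=\mu_B$ according to formula (\ref{Mu}). The hypotheses transfer cleanly: the uniform ellipticity $\|\mu\|_{\infty}<1$ is imposed directly, the Holder regularity $\mu\in C^{\alpha}(\mathbb{R}^2\setminus\overline D)$ is exactly the stated assumption, and the Caratheodory function $\Phi:\partial D\times\mathbb{R}^2\to\mathbb{R}^2$ together with the direction field ${\cal N}$ carry over unchanged under the isomorphism $\mathbb{R}^2\leftrightarrow\mathbb{C}$, $(x,y)\leftrightarrow x+iy$, used throughout the paper.

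Next I would invoke Theorem 10 in \cite{GRY+} for the Beltrami equation (\ref{B}) with this $\mu$, obtaining complex-valued regular solutions $f^+$ in $D$ and $f^-$ in $\mathbb{R}^2\setminus\overline D$ with $f^-\in C^{1+\alpha}(\mathbb{R}^2\setminus\overline D)$ satisfying the Poincare--Riemann condition in its analytic form along $\{\gamma_\zeta^\pm\}$ for a.e. $\zeta\in\partial D$ with respect to logarithmic capacity. Setting $u^\pm:={\rm Re}\,f^\pm$, $v^\pm:={\rm Im}\,f^\pm$ and $W^\pm:=(u^\pm,v^\pm)$, Proposition \ref{pr} together with Remark \ref{rem} guarantees that each pair $(u^\pm,v^\pm)$ satisfies the generalized Cauchy--Riemann equation (\ref{BM}) with the prescribed matrix coefficient $B$. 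The regularity $W^-\in C^{1+\alpha}$ is then inherited directly from that of $f^-$, and the solutions stay regular because, exactly as in the proof of Lemma \ref{lem1}, the Jacobian of $f=u+iv$ degenerates precisely where $\nabla u=0$ or $\nabla v=0$.

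The step that will require care is the translation of the boundary condition (\ref{eqPR}). Under the identification $\mathbb{C}\cong\mathbb{R}^2$ the directional derivative $\partial f^-/\partial{\cal N}=\partial u^-/\partial{\cal N}+i\,\partial v^-/\partial{\cal N}$ corresponds componentwise to the vector $\partial W^-/\partial{\cal N}$, so the angular limit $[\partial W/\partial{\cal N}]^-(\zeta)$ along $\gamma_\zeta^-$ coincides with the limit of the complex normal derivative, while $W^+(\zeta)$ along $\gamma_\zeta^+$ coincides with $f^+(\zeta)$. Consequently the real Caratheodory relation $W^+(\zeta)=\Phi(\zeta,[\partial W/\partial{\cal N}]^-(\zeta))$ is literally the analytic boundary condition of Theorem 10 read through the isomorphism, valid for a.e.\ $\zeta$ with respect to logarithmic capacity since the exceptional null set is the same in both pictures. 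The main obstacle is thus purely the bookkeeping of these identifications — confirming that the notions of \emph{regular solution}, of \emph{limit along a Bagemihl--Seidel arc}, and of the $C^{1+\alpha}$ normal-derivative trace all match on the two sides — rather than any new analytic input, which is entirely supplied by \cite{GRY+}.

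Finally, I would read off the infinite dimensionality of the solution space directly: Theorem 10 in \cite{GRY+} supplies an infinite-dimensional family of solutions $f^\pm$, and since $f\mapsto({\rm Re}\,f,{\rm Im}\,f)$ is a real-linear bijection onto pairs $(u,v)$, the associated family of solutions $W^\pm$ of (\ref{BM}) would again be infinite-dimensional for each prescribed collection $B$, $\Phi$, ${\cal N}$ and $\{\gamma_\zeta^\pm\}$.
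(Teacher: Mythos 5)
Your proposal is correct and takes essentially the same route as the paper: Theorem \ref{th5.5} is presented there precisely as a consequence of Theorem 10 in \cite{GRY+}, obtained by forming $\mu_B$ via (\ref{Mu}), passing to real and imaginary parts, and invoking Proposition \ref{pr} with Remark \ref{rem}, arguing as in the proof of Lemma \ref{lem1}. Your elaboration of the bookkeeping (transfer of the boundary condition, regularity, and infinite dimensionality under the real-linear identification $f\mapsto({\rm Re}\,f,{\rm Im}\,f)$) is exactly the intended argument.
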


\begin{remark}\label{remPR}
As in Remark \ref{remS}, the similar consequence to the
ge\-ne\-ra\-li\-zed Cauchy-Riemann equations (\ref{BM}) can be
formulated on the basis of Lemma 5 in \cite{GRY+} on the nonlinear
Poincare-Riemann boundary problem with homeomorphic shifts $\alpha
:\partial D\to\partial D$ keeping components of $\partial D$ such
that $\alpha$ and $\alpha^{-1}$ have the (N)-property by Lusin with
respect to logarithmic capacity.
\end{remark}

\section{Boundary value problems and angular limits}

In the last section, you found theorems on boundary value problems
to generalized Cauchy-Riemann equation (\ref{BM}), where the
boundary values are meant as limits along the systems of Jordan arcs
of the Bagemihl-Seidel class that contained exactly one such an arc
terminating at each boun\-da\-ry point. At the present section, we
obtain the corresponding consequences on the boundary value problems
of Dirichlet and Hilbert to generalized Cauchy-Riemann equations
(\ref{BM}), where boundary values are understood as li\-mits along
all non-tangent paths to each boundary point. The latter request is
formally more strong but except tangent arcs to the boundary points.
Thus, these results cannot be derived each from other. Note also
that in the both cases conclusions on boundary points hold only a.e.
with respect of logarithmic capacity.

\medskip

For further definitions and historic comments, we refer the reader
to the article \cite{GRYY+}, which is a basis for us to obtain a new
series of consequences on boundary value problems for the
Cauchy-Riemann equations (\ref{BM}) from the theory of the Beltrami
equations (\ref{B}). Note only here that each smooth (or Lipschitz)
domain satisfies the quasihyperbolic boundary condition by
Gehring-Martio from the article \cite{GM} but such boundaries can be
even nowhere locally rectifiable quasicircles.

\medskip

Moreover, domains with the $(A)-$condition by
Ladyzhenskaya–Ural’\-tse\-va, in particular, with the well-known
outer cone condition, which is standard in the theory of boundary
value problems for PDE, see e.g. \cite{LU}, satisfy the mentioned
quasihyperbolic boundary condition. Probably one of the simplest
examples of a domain $D$ with the quasihyperbolic boundary condition
and without (A)--condition is the union of 3 open disks with the
radius 1 centered at the points $0$ and $1\pm i$. It is clear that
the domain has zero interior angle at its boundary point $1$.

\medskip

Similarly to the last section, on the basis of Theorem 6.1, its
proof and Corollary 8.2 in \cite{GRYY+}, and based on Proposition
\ref{pr} and Remark \ref{rem}, we obtain the following consequence
on the Hilbert problem.

\begin{theorem}\label{th6.1} Let $D$ be  a Jordan domain with the quasihyperbolic
boundary condition and $\partial D$ have a tangent a.e. with respect
to logarithmic capacity and  $B:D\to\mathbb B^{2\times 2}$ be
measurable function in $D$ with $\|\mu\|_{\infty} < 1$, $\mu=\mu_B$
in (\ref{Mu}). Suppose that a function $\Lambda :
\partial D \to \mathbb R^2$, $|\Lambda |\equiv 1$, is of
coun\-tab\-ly bounded variation and a function $\varphi :\partial D
\to \mathbb R$ is measurable with respect to logarithmic capacity.

\medskip

Then the genegalized Cauchy-Riemann equation (\ref{BM}) has a
regular solution $W(Z)=(u(Z),v(Z))$, $Z=(x,y)$, in $W^{1,2}_{\rm
loc}(D)$ with the Hilbert condition
\begin{equation}\label{eqHCA}
\lim\limits_{Z\to\zeta}\,\langle\, \Lambda(Z)\,, \,W(Z)\,\rangle\ =\
\varphi(\zeta) \ \ \ \ \ \hbox{for a.e. $\zeta \in
\partial D$ }
\end{equation}
with respect to logarithmic capacity along all nontangent paths.

\medskip

Furthermore, the space of all such solutions has the infinite
dimension for each such prescribed $B$, $\Lambda$ and $\varphi$.
\end{theorem}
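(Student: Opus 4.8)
The plan is to transfer the problem to the Beltrami equation (\ref{B}) through the pointwise algebraic correspondence of Proposition \ref{pr} and Remark \ref{rem}, to apply the Hilbert-problem theory along nontangent paths from \cite{GRYY+}, and then to pull the solution back. First I would encode the data under the identification $\mathbb R^2\leftrightarrow\mathbb C$. Since $\|\mu\|_\infty<1$, the coefficient $\mu:=\mu_B$ given by (\ref{Mu}) is a genuine uniformly elliptic Beltrami coefficient on $D$. Writing $\Lambda=(\lambda_1,\lambda_2)$, I set $\lambda:=\lambda_1+i\lambda_2$, so that $|\lambda|\equiv 1$ and $\lambda$ inherits the countably bounded variation of $\Lambda$, while $\varphi$ is kept unchanged as a function measurable with respect to logarithmic capacity.

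Next I would invoke Theorem 6.1, its proof and Corollary 8.2 in \cite{GRYY+} for the equation $f_{\bar z}=\mu f_z$ on $D$. The hypotheses match term for term: $D$ is a Jordan domain with the quasihyperbolic boundary condition, $\partial D$ has a tangent a.e.\ with respect to logarithmic capacity, $\mu$ is uniformly elliptic, $\lambda$ is of countably bounded variation and $\varphi$ is measurable with respect to logarithmic capacity. This produces a regular solution $f\in W^{1,2}_{\rm loc}(D)$ of (\ref{B}) satisfying the classical Hilbert condition $\lim_{z\to\zeta}{\rm Re}\,\{\overline{\lambda(\zeta)}\,f(z)\}=\varphi(\zeta)$ for a.e.\ $\zeta\in\partial D$ with respect to logarithmic capacity along all nontangent paths, and it gives the infinite dimension of the corresponding solution space. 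I then set $u:={\rm Re}\,f$, $v:={\rm Im}\,f$ and $W:=(u,v)$.

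It remains to read the conclusion back for (\ref{BM}). By Proposition \ref{pr} together with Remark \ref{rem}, the pair $(u,v)$ satisfies (\ref{BM}) a.e.\ in $D$ with exactly the prescribed coefficient $B$, since under $\det B=1$, $b_{22}=-b_{11}$, $b_{12}<0<b_{21}$ the correspondence $B\mapsto\mu_B\mapsto B$ is the identity; moreover $W\in W^{1,2}_{\rm loc}(D)$ because its components are ${\rm Re}\,f$ and ${\rm Im}\,f$, and regularity in the required discrete-open sense follows as in the proof of Lemma \ref{lem1} from the positivity of the Jacobian (\ref{J}) of $f$. The boundary condition transfers through the single pointwise identity $\langle\Lambda,W\rangle={\rm Re}\,\{\overline{\lambda}\,f\}$, which follows from $\overline{\lambda}f=(\lambda_1 u+\lambda_2 v)+i(\lambda_1 v-\lambda_2 u)$; hence the nontangent limit (\ref{eqHCA}) holds on exactly the full-capacity set on which the complex Hilbert condition holds, and the infinite dimension of the solution set for (\ref{BM}) is inherited from that for (\ref{B}) via the $\mathbb R$-linear bijection $f\mapsto({\rm Re}\,f,{\rm Im}\,f)$.

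Essentially all the analytic depth sits in the cited Beltrami theory, so the genuine content of the present argument is the dictionary between the two formulations. The one point warranting care, and thus the main (if shallow) obstacle, is the logarithmic-capacity bookkeeping at the boundary: I must make sure that the exceptional set for $f$ is transported without enlargement to the condition on $\langle\Lambda,W\rangle$. This is immediate here because $\Lambda$ and $\lambda$ are tied by a fixed isometric identification and $\varphi$ is untouched, so the algebraic identity above preserves the boundary relation pointwise on $\partial D$ and in particular on the relevant full-capacity set.
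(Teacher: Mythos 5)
Your proposal is correct and follows essentially the same route as the paper: the paper derives Theorem \ref{th6.1} precisely by applying Theorem 6.1, its proof and Corollary 8.2 of \cite{GRYY+} to the Beltrami equation with $\mu=\mu_B$, $\|\mu\|_\infty<1$, and then transferring the solution $f=u+iv$ back to (\ref{BM}) via Proposition \ref{pr} and Remark \ref{rem}, with the Hilbert condition carried over by the identity $\langle\Lambda,W\rangle={\rm Re}\,\{\overline{\lambda}\,f\}$. Your additional care about the exceptional set of logarithmic capacity and the transfer of regularity and infinite dimensionality matches the intended (largely implicit) argument in the paper.
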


In particular case with $\Lambda = (1,0)$ for all $\zeta\in\partial
D$, we obtain from here the corresponding consequence on the
Dirichlet problem.

\begin{corollary}\label{cor6.1} Under all hypotheses of Theorem
\ref{th6.1} with $\Lambda\equiv(1,0)$,
\begin{equation}\label{eqDA}
\lim\limits_{Z\to\zeta}\, u(Z)\ =\ \varphi(\zeta) \ \ \ \ \
\hbox{for a.e. $\zeta \in
\partial D$ }
\end{equation}
with respect to logarithmic capacity along all nontangent paths.
Furthermore, the space of all such solutions has the infinite
dimension for each such prescribed $B$ and $\varphi$.
\end{corollary}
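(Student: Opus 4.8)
The plan is to obtain Corollary~\ref{cor6.1} as the direct specialization of Theorem~\ref{th6.1} to the constant coefficient $\Lambda\equiv(1,0)$. First I would verify that this constant choice is admissible in Theorem~\ref{th6.1}: clearly $|\Lambda|\equiv|(1,0)|=1$, and being constant on $\partial D$ it is trivially of countably bounded variation, so the two hypotheses imposed on $\Lambda$ are met, while all the remaining hypotheses (on $D$, on the tangent to $\partial D$ a.e.\ with respect to logarithmic capacity, on $B$ with $\|\mu\|_\infty<1$, on $\mu_B$ from (\ref{Mu}), and on the measurable datum $\varphi$) are inherited verbatim from the statement of the theorem.

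Next I would evaluate the Hilbert pairing for this choice. Writing the solution as $W(Z)=(u(Z),v(Z))$ and interpreting it as a vector-column in $\mathbb R^2$, the inner product appearing in the Hilbert condition (\ref{eqHCA}) collapses to
\[
\langle\,\Lambda(Z)\,,\,W(Z)\,\rangle\ =\ \langle\,(1,0)\,,\,(u(Z),v(Z))\,\rangle\ =\ u(Z)\ .
\]
Consequently the boundary relation (\ref{eqHCA}) guaranteed by Theorem~\ref{th6.1} becomes precisely the Dirichlet-type condition (\ref{eqDA}), namely $\lim_{Z\to\zeta}u(Z)=\varphi(\zeta)$ for a.e.\ $\zeta\in\partial D$ with respect to logarithmic capacity along all nontangent paths. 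The regular solution $W=(u,v)$ of (\ref{BM}) furnished by the theorem thus directly serves as the required solution of the Dirichlet problem.

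Finally, the assertion on the infinite dimension of the solution space transfers without change, since the family of solutions produced by Theorem~\ref{th6.1} for this particular $\Lambda$ is already of infinite dimension and every member of it satisfies (\ref{eqDA}). I do not anticipate any genuine obstacle in this argument; the whole content is the observation that the Hilbert projection onto the fixed direction $(1,0)$ is nothing but the potential component $u$, and the only point needing an explicit (one-line) check is that a constant $\Lambda$ qualifies as being of countably bounded variation, which is immediate.
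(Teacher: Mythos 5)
Your proposal is correct and coincides with the paper's own (implicit) argument: the paper derives Corollary~\ref{cor6.1} precisely by specializing Theorem~\ref{th6.1} to the constant $\Lambda\equiv(1,0)$, under which the Hilbert pairing $\langle\,\Lambda\,,\,W\,\rangle$ reduces to the component $u$ and the condition (\ref{eqHCA}) becomes (\ref{eqDA}). Your extra one-line checks (unit modulus, countably bounded variation of a constant, transfer of the infinite-dimensionality claim) are exactly the routine verifications the paper leaves to the reader.
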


\bigskip

The corresponding survey of consequences on boundary value problems
to generalized Cauchy-Riemann equations with sources from the theory
of the Beltrami equations will be published elsewhere.

\medskip

{\bf Acknowledgments.} The first 2 authors are partially supported
by the project "Mathematical modeling of complex dynamical systems
and processes caused by the state security", No. 0123U100853, of the
National Academy of Scien\-ces of Ukraine and by a grant from the
Simons Foundation PD-Ukraine-00010584.

\bigskip

{\bf \noindent Vladimir Gutlyanskii, Vladimir Ryazanov, Artem Salimov} \\
Institute of Applied Mathematics and Mechanics of the\\
National Academy of Sciences of Ukraine, Slavyansk,\\
vgutlyanskii@gmail.com, vl.ryazanov1@gmail.com, \\
Ryazanov@nas.gov.ua, artem.salimov1@gmail.com

\bigskip

{\bf \noindent Ruslan Salimov}\\
{Institute of Mathematics of the NAS of Ukraine, Kyiv,
Ukraine}\\
ruslan.salimov1@gmail.com


\begin{thebibliography}{100}

\bibitem{Ah}
{\sc  Ahlfors L.} (1966) \emph{Lectures on Quasiconformal Mappings.}
New York: Van Nostrand.

\bibitem{ABe}
{\sc  Ahlfors, L.V., Bers, L.:} (1960) Riemann’s mapping theorem for
variable metrics. {\it Ann. Math. (2) 72}, 385-404.

\bibitem{AC$_2$}
{\sc  Andreian Cazacu C.:} Some formulae on the extremal length in
$n$-dimensional case.  Proc. Rom.-Finn. Sem. on Teichm\"uller Spaces
and Quasiconformal Mappings (Brazov, 1969), pp. 87--102, Publ. House
of Acad. Soc. Rep. Romania, Bucharest (1971).

\bibitem{AIKM}
{\sc  Astala K., Iwaniec T., Koskela P., Martin G.:} Mappings of
BMO-bounded distortion. Math. Ann. 317, No. 4, 703-726 (2000).

\bibitem{AIM}
{\sc  Astala K., Iwaniec T., Martin G.J.:} (2009) \emph{Elliptic
differential equations and quasiconformal mappings in the plane.}
Princeton Math. Ser.  {\bf 48}. Princeton: Princeton Univ. Press.

\bibitem{BS}
{\sc Bagemihl F., Seidel W.:} Regular functions with prescribed
measurable boundary values almost everywhere. - Proc. Nat. Acad.
Sci. U.S.A. 41, 1955, 740–743.


\bibitem{BGMR} {\sc Bojarski B., Gutlyanskii V., Martio O., Ryazanov V.:}
In\-fi\-ni\-te\-si\-mal geometry of quasiconformal and bi-Lipschitz
mappings in the plane. - EMS Tracts in Mathematics, 19.  Z\"urich:
European Mathematical Society, 2013.

\bibitem{BN}
{\sc Brezis H., Nirenberg L.:} {Degree theory and BMO. I. Compact
manifolds without boundaries.} Selecta Math. (N.S.) 1:2, 1995,
197--263.

\bibitem{CFL}
{\sc Chiarenza F., Frasca M., Longo P.:} {$W^{2,p}$-solvability of
the Dirichlet problem for nondivergence elliptic equations with VMO
coefficients.} - Trans. Amer. Math. Soc. 336:2, 1993, 841--853.



\bibitem{GM}
{\sc  Gehring F. W., Martio O.:} Lipschitz classes and
quasiconformal mappings // Ann. Acad. Sci. Fenn. Ser. A I Math. --
1985. -- {\bf 10}. -- P. 203–219.

\bibitem{GMR}
{\sc  Gutlyanskii, V.,  Martio, O.,  Ryazanov, V.:} (2023)
A-harmonic equation and cavitation. Annales Fennici Mathematici 48,
no.1, 277-297.

\bibitem{GMSV}
{\sc Gutlyanskii V., Martio O., Sugawa T., Vuorinen M.:} On the
degenerate Beltrami equation. - Trans. Amer. Math. Soc. 357:3, 2005,
875-900.

\bibitem{GRSY22} {\sc Gutlyanskii V., Ryazanov V., Sevost'yanov E., Yakubov
E.} (2022) BMO and Dirichlet problem for degenerate Beltrami
equation. J. Math. Sci. (New York) 268, No. 2, 157-177; (2022)
translation from Ukr. Mat. Visn. 19, No. 3, 327-354.

\bibitem{GRSY22+} {\sc Gutlyanskii V., Ryazanov V., Sevost'yanov E., Yakubov
E.} (2022) On the degenerate Beltrami equation and hydrodynamic
normalization. J. Math. Sci. (New York) 262, No. 2, 165-183; (2022)
translation from Ukr. Mat. Visn. 19, No. 1, 49-74.

\bibitem{GRSY} {\sc Gutlyanskii V., Ryazanov V., Srebro U., Yakubov
E.:} The Beltrami Equation: A Geometric Approach. - Developments in
Mathematics 26, Springer: Berlin, 2012.

\bibitem{GRY} {\sc Gutlyanskii V., Ryazanov V., Yakubov E.} (2015)
The Beltrami equations and prime ends. J. Math. Sci. (New York) 210,
No. 1, 22-51; (2015) translation from Ukr. Mat. Visn. 12, No. 1,
27-66.

\bibitem{GRY+} {\sc Gutlyanskii V., Ryazanov V., Yefimushkin A.} (2016)
On the boundary-value problems for quasiconformal functions in the
plane. J. Math. Sci. (New York) 214, No. 2, 200-219; (2015)
translation from Ukr. Mat. Visn. 12, No. 3, 363-389.

\bibitem{GRYY} {\sc Gutlyanskii V., Ryazanov V., Yakubov E., Yefimushkin A.}
(2021) On the Hilbert boundary-value problem for Beltrami equations
with singularities. J. Math. Sci. (New York) 254, no. 3, 357-374;
(2020) translation from Ukr. Mat. Visn. 17, no. 4, 484-508.

\bibitem{GRYY+} {\sc Gutlyanskii V., Ryazanov V., Yakubov E., Yefimushkin A.}
(2020) On Hilbert boundary value problem for Beltrami equation. Ann.
Acad. Sci. Fenn., Math. 45, No. 2, 957-973.


\bibitem{ER+}
{\sc  Efimushkin A.S., Ryazanov V.I.:} (2016) On the Riemann-Hilbert
problem for the Beltrami equations. Contemporary Mathematics 667.
Israel Mathematical Conference Proceedings, 299-316.

\bibitem{ER}
{\sc  Efimushkin A.S.; Ryazanov V.I.:} On the Riemann-Hilbert
problem for the Beltrami equations in quasidisks. (2015) J. Math.
Sci. (N. Y.) 211, no. 5, 646-659; (2015) translation from Ukr. Mat.
Visn. 12, no. 2, 190-209.

\bibitem{HKM}
{\sc  Heinonen, J., Kilpel\"ainen, T., Martio, O.:} (1993).
\emph{Nonlinear $A-$harmonic theory of degenerate elliptic
equations}. Oxford Mathematical Monographs. Oxford Science
Publications, The Clarendon Press, Oxford University Press, New
York.

\bibitem{H1}
{\sc   Hilbert D.:} \"Uber eine Anwendung der Integralgleichungen
auf eine Problem der Funktionentheorie. -- Verhandl. des III Int.
Math. Kongr., Heidelberg, 1904.


\bibitem{IR}
{\sc Ignat'ev A.A., Ryazanov V.I.:} {Finite mean oscillation in the
mapping theory.} - Ukrainian Math. Bull. 2:3, 2005, 403-424.

\bibitem{IKM}
{\sc  Iwaniec T., Koskela P., Martin G.:} Mappings of BMO-distortion
and Beltrami-type operators. J. Anal. Math. 88, 337-381 (2002).

\bibitem{IS}
{\sc Iwaniec T., C. Sbordone C.:} {Riesz transforms and elliptic
PDEs with VMO coefficients.} - J. Anal. Math. 74, 1998, 183--212.

\bibitem{JN}
{\sc John F., Nirenberg L.:} {On functions of bounded mean
oscillation.} - Comm. Pure Appl. Math. 14, 1961, 415--426.

\bibitem{KPR1}
{\sc  Kovtonyuk D.A., Petkov I.V., Ryazanov V.I.:} (2013) On the
boun\-da\-ry behaviour of solutions to the Beltrami equations.
Complex Var. Elliptic Equ. 58, no. 5, 647-663.

\bibitem{KPR2}
{\sc  Kovtonyuk D.A., Petkov I.V., Ryazanov V.I.:} (2012) On the
Dirichlet problem for the Beltrami equations in finitely connected
domains. Ukr. Math. J. 64, no. 7, 1064-1077; (2012) translation from
Ukr. Mat. Zh. 64, no. 7, 932-944.

\bibitem{KPR3}
{\sc  Kovtonyuk D.A., Petkov I.V., Ryazanov V.I.} (2012) On the
boundary behavior of solutions of the Beltrami equations. Ukr. Math.
J. 63, no. 8, 1241-1255; (2011) translation from Ukr. Mat. Zh. 63,
no. 8, 1078-1091.

\bibitem{KPRS+}
{\sc  Kovtonyuk D.A., Petkov I.V., Ryazanov V.I., Salimov R.R.:}
(2014) On the Dirichlet problem for the Beltrami equation. J. Anal.
Math. 122, 113-141.

\bibitem{KPRS}
{\sc  Kovtonyuk D.A., Petkov I.V., Ryazanov V.I., Salimov R.R.:}
(2014) Boundary behavior and the Dirichlet problem for Beltrami
equations. St. Petersbg. Math. J. 25, no. 4, 587-603; (2013)
translation from Algebra Anal. 25, No. 4, 101-124.

\bibitem{LU}
{\sc  Ladyzhenskaya O.A., Ural'tseva N.N.} { Linear and
qua\-si\-li\-ne\-ar elliptic equations}. -- Academic Press : New
York - London, 1968.

\bibitem{Le}
{\sc Lehto O.:} Homeomorphisms with a prescribed dilatation. -
Lecture Notes in Math. 118, 1968, 58-73.

\bibitem{LV} {\sc Lehto O., Virtanen K.I.:} Quasiconformal mappings in the plane. -
Die Grundlehren der mathematischen Wissenschaften 126, Springer:
Berlin-Heidelberg-New York, 1973.

\bibitem{MRSY}
{\sc Martio O., Ryazanov V., Srebro U., Yakubov E.:} Moduli in
modern mapping theory. - Springer Monographs in Mathematics. -
Springer: New York, 2009.

\bibitem{MRV}
{\sc  Martio O., Ryazanov V., Vuorinen M.:} BMO and injectivity of
space quasiregular mappings. Math. Nachr. 205, 149-161 (1999).

\bibitem{MU}
{\sc  Menovschikov A., Ukhlov A.:} Composition operators on
Hardy-Sobolev spaces and BMO-quasiconformal mappings. J. Math. Sci.
(New York) 258, No. 3, 313-325 (2021); translation from Ukr. Mat.
Visn. 18, No. 2, 209-225 (2021).

\bibitem{Ne} {\sc Nevanlinna R.:} Eindeutige analytische Funktionen. 2. Aufl. Reprint.
(German) - Die Grundlehren der mathematischen Wissenschaften. Band
46. Springer-Verlag: Berlin-Heidelberg-New York, 1974.

\bibitem{Pal}
{\sc Palagachev D.K.:} {Quasilinear elliptic equations with VMO
coefficients.} - Trans. Amer. Math. Soc. 347:7, 1995, 2481--2493.

\bibitem{Ra$_1$}
{\sc Ragusa M.A.:} {Elliptic boundary value problem in vanishing
mean oscillation hypothesis.} - Comment. Math. Univ. Carolin. 40:4,
1999, 651--663.

\bibitem{Ra$_2$}
{\sc Ragusa M.A., Tachikawa A.:} Partial regularity of the
minimizers of quadratic functionals with VMO coefficients. J. Lond.
Math. Soc., II. Ser. 72:3, 2005, 609-620.

\bibitem{RW}
{\sc  Reich E., Walczak H.:} On the behavior of quasiconformal
mappings at a point. Trans. Amer. Math. Soc. \textbf{117}, 338--351
(1965).

\bibitem{RR}
{\sc Reimann H.M., Rychener T.:} {Funktionen Beschr\"ankter
Mittlerer Oscillation.} - Lecture Notes in Math. 487, 1975.

\bibitem{RS}
{\sc  Ryazanov V.I., Salimov R.R.:} (2007) Weakly planar spaces and
boundaries in the theory of mappings. Ukr. Mat. Visn. 4, no. 2,
199–234, 307; (2007) translation in Ukr. Math. Bull. 4, no. 2,
199–234.

\bibitem{RSY1}
{\sc  Ryazanov V., Srebro U., Yakubov E.:} On boundary value
problems for the Beltrami equations. Contemporary Mathematics 591.
Israel Mathematical Conference Proceedings, 211-242 (2013).

\bibitem{RSY2}
{\sc  Ryazanov V., Srebro U., Yakubov E.:} Integral conditions in
the theory of the Beltrami equations. Complex Var. Elliptic Equ. 57,
No. 12, 1247-1270 (2012).

\bibitem{RSY3}
{\sc  Ryazanov V., Srebro U., Yakubov E.:} On integral conditions in
the mapping theory. J. Math. Sci. (New York), 173, No. 4, 397-407
(2011); translation from Ukr. Mat. Visn. 7, No. 1, 73-87 (2010).

\bibitem{RSY4}
{\sc  Ryazanov V., Srebro U., Yakubov E.:} On strong solutions of
the Beltrami equations. Complex Var. Elliptic Equ. 55, No. 1-3,
219-236 (2010).

\bibitem{RSY5}
{\sc  Ryazanov V., Srebro U., Yakubov E.:} Finite mean oscillation
and the Beltrami equation. Isr. J. Math. 153, 247-266 (2006).

\bibitem{RSY6}
{\sc  Ryazanov V., Srebro U., Yakubov E.:} On the theory of the
Beltrami equation. Ukr. Mat. Zh. 58, No. 11, 1571-1583 (2006);
translation in Ukr. Math. J. 58, No. 11, 1786-1798 (2006).

\bibitem{RSY7}
{\sc  Ryazanov V., Srebro U., Yakubov E.:} On ring solutions of
Beltrami equations. J. Anal. Math. 96, 117-150 (2005).

\bibitem{RSY8}
{\sc  Ryazanov V., Srebro U., Yakubov E.:} The Beltrami equation and
FMO functions. Contemporary Mathematics 382. Israel Mathematical
Con\-fe\-ren\-ce Proceedings, 357-364 (2005).

\bibitem{RSY9}
{\sc  Ryazanov V., Srebro U., Yakubov E.:} Plane mappings with
dilatation dominated by functions of bounded mean oscillation. Sib.
Adv. Math. 11, No. 2, 94-130 (2001).

\bibitem{RSY10}
{\sc  Ryazanov V., Srebro U., Yakubov E.:} BMO-quasiconformal
mappings. J. Anal. Math. 83, 1-20 (2001).


\bibitem{Saks}
{\sc  Saks S.:} Theory of the Integral. Dover, New York (1964).

\bibitem{Sar}
{\sc Sarason D.:} {Functions of vanishing mean oscillation.} -
Trans. Amer. Math. Soc. 207, 1975, 391--405.


\bibitem{Sto}
{\sc Stoilow S.:} {Lecons sur les Principes Topologue de le Theorie
des Fonctions Analytique.} Gauthier-Villars, 1938. Riemann,
Gauthier-Villars, Paris, 1956 [in French].



\end{thebibliography}
\end{document}